\documentclass{amsart}
\usepackage{verbatim}
\usepackage{amssymb}
\usepackage{amsmath}
\usepackage[usenames]{color}
\usepackage{graphicx}
\usepackage{amscd}
\usepackage[numbers,sort&compress,square]{natbib}

%
%
\newtheorem{theorem}{Theorem}
\numberwithin{theorem}{section}

\newtheorem{proposition}[theorem]{Proposition}
\newtheorem{proposition*}[theorem]{Proposition$^*$}
\newtheorem{lemma}[theorem]{Lemma}
\newtheorem{lemma*}[theorem]{Lemma$^*$}
\newtheorem{theorem*}[theorem]{Theorem$^*$}
\newtheorem{corollary}[theorem]{Corollary}

\newtheorem{definition}[theorem]{Definition}

\newtheorem{remark}[theorem]{Remark}

\numberwithin{equation}{section}
\numberwithin{theorem}{section}

\newenvironment{customthm}[1]
  {\innercustomthm}
  {\endinnercustomthm}

\newenvironment{customprop}[1]
  {\innercustomprop}
  {\endinnercustomprop}

\newenvironment{customlemma}[1]
  {\innercustomlemma}
  {\endinnercustomlemma}
\allowdisplaybreaks



\newcommand{\eps}{\varepsilon}

\newcommand{\sign}{\mathrm{sign}\text{ }}




\newcommand{\ud}[0]{\,\mathrm{d}}

\newcommand{\vertiii}[1]{{\left\vert\kern-0.25ex\left\vert\kern-0.25ex\left\vert #1 
    \right\vert\kern-0.25ex\right\vert\kern-0.25ex\right\vert}}

\begin{document}

\title[UMD spaces and martingale decompositions]
{Martingale decompositions and weak differential subordination 
 in UMD Banach spaces}

\author{Ivan Yaroslavtsev}
\address{Delft Institute of Applied Mathematics\\
Delft University of Technology \\ P.O. Box 5031\\ 2600 GA Delft\\The
Netherlands}
\email{I.S.Yaroslavtsev@tudelft.nl}

\begin{abstract}
In this paper we consider Meyer-Yoeurp decompositions for UMD Banach space-valued martingales. Namely, we prove that $X$ is a UMD Banach space if and only if for any fixed $p\in (1,\infty)$, any $X$-valued $L^p$-martingale $M$ has a~unique decomposition $M = M^d + M^c$ such that $M^d$ is a~purely discontinuous martingale, $M^c$ is a continuous martingale, $M^c_0=0$ and 
\[
 \mathbb E \|M^d_{\infty}\|^p + \mathbb E \|M^c_{\infty}\|^p\leq c_{p,X} \mathbb E \|M_{\infty}\|^p.
\]
An analogous assertion is shown for the Yoeurp decomposition of a purely discontinuous martingales into a sum of a~quasi-left continuous martingale and a martingale with accessible jumps. 

As an application we show that $X$ is a UMD Banach space if and only if for any fixed $p\in (1,\infty)$ and for all $X$-valued martingales $M$ and $N$ such that $N$ is weakly differentially subordinated to $M$, one has the estimate 
$$
\mathbb E \|N_{\infty}\|^p \leq C_{p,X}\mathbb E \|M_{\infty}\|^p.
$$
\end{abstract}

\keywords{differential subordination, weak differential subordination, UMD Banach spaces, Burkholder function, stochastic integration, Brownian representation, Meyer-Yoeurp decomposition, Yoeurp decomposition, purely discontinuous martingales, continuous martingales, quasi-left continuous, accessible jumps, canonical decomposition of martingales}

\subjclass[2010]{60G44 Secondary: 60B11, 60G46}

\maketitle

\section{Introduction}

It is well-known from the fundamental paper of It\^o \cite{Ito42} on the real-valued case, and several works \cite{AlRud05,App07,RieGaa,Dett82,Baum15} on the vector-valued case, that for any Banach space~$X$, any centered $X$-valued L\'evy process has a unique decomposition \mbox{$L = W +\widetilde N$}, where $W$ is an $X$-valued Wiener process, and $\widetilde N$ is an $X$-valued weak integral with respect to a certain compensated Poisson random measure. Moreover, $W$ and $\widetilde N$ are independent, and therefore since $W$ is symmetric, for each $1<p<\infty$ and $t\geq 0$,
\begin{equation}\label{eq:LevyItop-norm}
 \mathbb E \|\widetilde N_t\|^p \leq \mathbb E \|L_t\|^p. 
\end{equation}

The natural generalization of this result to general martingales in the real-valued setting was provided by Meyer in \cite{Mey76} and Yoeurp in \cite{Yoe76}. Namely, it was shown that any real-valued martingale $M$ can be uniquely decomposed into a sum of two martingales $M^d$ and $M^c$ such that $M^d$ is purely discontinuous (i.e.\ the quadratic variation $[M^d]$ has a pure jump version), and $M^c$ is continuous with $M^c_0=0$. The reason why they needed such a decomposition is a further decomposition of a semimartingale, and finding an exponent of a semimartingale (we refer the reader to \cite{Kal} and \cite{Yoe76} for the details on this approach). In the present article we extend Meyer-Yoeurp theorem to the vector-valued setting, and provide extension of \eqref{eq:LevyItop-norm} for a~general martingale (see Subsection \ref{subsec:MeyYoudec}). Namely, we prove that for any UMD Banach space $X$ and any $1<p<\infty$, an $X$-valued $L^p$-martingale $M$ can be uniquely decomposed into a~sum of two martingales $M^d$ and $M^c$ such that $M^d$ is purely discontinuous (i.e.\ $\langle M^d, x^*\rangle$ is purely discontinuous for each $x^* \in X^*$), and $M^c$ is continuous with $M^c_0=0$. Moreover, then for each $t\geq 0$,
\begin{equation}\label{eq:MeyYoeIntro}
  (\mathbb E \|M^d_t\|^p)^{\frac 1p} \leq \beta_{p, X}(\mathbb E \|M_t\|^p)^{\frac 1p},\;\;\;\;\;
   (\mathbb E \|M^c_t\|^p)^{\frac 1p} \leq \beta_{p, X}(\mathbb E \|M_t\|^p)^{\frac 1p},
\end{equation}
where $\beta_{p, X}$ is the UMD$_p$ constant of $X$ (see Subsection \ref{subsec:UMD}). Theorem \ref{thm:exampleforlowboundofA^qA^a} shows that such a decomposition together with $L^p$-estimates of type \eqref{eq:MeyYoeIntro} is possible if and only if $X$ has the UMD property.

The purely discontinuous part can be further decomposed: in \cite{Yoe76} Yoeurp proved that any real-valued purely discontinuous $M^d$ can be uniquely decomposed into a sum of a purely discontinuous quasi-left continuous martingale~$M^q$ (analogous to the ``compensated Poisson part'', which does not jump at predictable stopping times), and a purely discontinuous martingale with accessible jumps $M^a$ (analogous to the ``discrete part'', which jumps only at certain predictable stopping times). In Subsection \ref{subsec:Youdec} we extend this result to a UMD space-valued setting with appropriate estimates. Namely, we prove that for each $1<p<\infty$ the same type of decomposition is possible and unique for an $X$-valued purely discontinuous $L^p$-martingale $M^d$, and then for each $t\geq 0$,
\begin{equation}\label{eq:YouIntro}
  (\mathbb E \|M^q_t\|^p)^{\frac 1p} \leq \beta_{p, X}(\mathbb E \|M^d_t\|^p)^{\frac 1p},\;\;\;\;\;
   (\mathbb E \|M^a_t\|^p)^{\frac 1p}\leq \beta_{p, X}(\mathbb E \|M^d_t\|^p)^{\frac 1p}.
\end{equation}
Again as Theorem \ref{thm:exampleforlowboundofA^qA^a} shows, the \eqref{eq:YouIntro}-type estimates are a possible only in UMD Banach spaces.

\smallskip

Even though the Meyer-Yoeurp and Yoeurp decompositions can be easily extended from the real-valued case to a Hilbert space case, the author could not find the corresponding estimates of type \eqref{eq:MeyYoeIntro}-\eqref{eq:YouIntro} in the literature, so we wish to present this special issue here. If $H$ is a Hilbert space, $M:\mathbb R_+ \times \Omega \to H$ is a martingale, then there exists a unique decomposition of $M$ into a continuous part $M^c$, a purely discontinuous quasi-left continuous part $M^q$, and a purely discontinuous part $M^a$ with accessible jumps. Moreover, then for each $1<p<\infty$, and for $i=c,q,a$,
\begin{equation}\label{eq:Hvaluedcasintro}
  (\mathbb E \|M^i_t\|^p)^{\frac 1p} \leq (p^*-1)(\mathbb E \|M_t\|^p)^{\frac 1p},
\end{equation}
where $p^* = \max\{p, \frac{p}{p-1}\}$. Notice that though \eqref{eq:Hvaluedcasintro} follows from \eqref{eq:MeyYoeIntro}-\eqref{eq:YouIntro} since $\beta_{p,H}=p^*-1$, it can be easily derived from the differential subordination estimates for Hilbert space-valued martingales obtained by Wang in~\cite{Wang}.

\smallskip

Both the Meyer-Yoeurp and Yoeurp decompositions play a significant r\^ole in stochastic integration: if $M=M^c + M^q + M^a$ is a decomposition of an $H$-valued martingale $M$ into continuous, purely discontinuous quasi-left continuous and purely discontinuous with accessible jumps parts, and if $\Phi:\mathbb R_+ \times \Omega \to \mathcal L(H, X)$ is elementary predictable for some UMD Banach space $X$, then the decomposition $\Phi \cdot M=\Phi \cdot M^c + \Phi \cdot M^q + \Phi \cdot M^a$ of a stochastic integral $\Phi \cdot M$ is a decomposition of the martingale $\Phi \cdot M$ into continuous, purely discontinuous quasi-left continuous and purely discontinuous with accessible jumps parts, and for any $1<p<\infty$ we have that
\[
 \mathbb E \|(\Phi \cdot M)_{\infty}\|^p \eqsim_{p,X}\mathbb E \|(\Phi \cdot M^c)_{\infty}\|^p + \mathbb E \|(\Phi \cdot M^q)_{\infty}\|^p + \mathbb E \|(\Phi \cdot M^a)_{\infty}\|^p.
\]
The corresponding It\^o isomorphism for $\Phi \cdot M^c$ for a general UMD Banach space $X$ was derived by Veraar and the author in \cite{VY2016}, while It\^o isomorphisms for $\Phi \cdot M^q$ and $\Phi \cdot M^a$ have been shown by Dirksen and the author in \cite{DY17} for the case $X = L^r(S)$, $1<r<\infty$.

\smallskip

The major underlying techniques involved in the proofs of \eqref{eq:MeyYoeIntro} and \eqref{eq:YouIntro} are rather different from the original methods of Meyer in \cite{Mey76} and Yoeurp in \cite{Yoe76}. They include the results on the differentiability of the Burkholder function of any finite dimensional Banach space, which have been proven recently in \cite{Y17FourUMD} and which allow us to use It\^o formula in order to show the desired inequalities in the same way as it was demonstrated by Wang in \cite{Wang}.

\smallskip

The main application of the Meyer-Yoeurp decomposition are $L^p$-estimates for weakly differentially subordinated martingales. The weak differential subordination property was introduced by the author in \cite{Y17FourUMD}, and can be described in the following way: an $X$-valued martingale $N$ is weakly differentially subordinated to an $X$-valued martingale $M$ if for each $x^* \in X^*$  a.s.\ $|\langle N_0, x^*\rangle| \leq |\langle M_0, x^*\rangle|$ and for each $t\geq s\geq 0$
\[
 [\langle N, x^*\rangle]_t - [\langle N, x^*\rangle]_s \leq [\langle M, x^*\rangle]_t - [\langle M, x^*\rangle]_s.
\]
If both $M$ and $N$ are purely discontinuous, and if $X$ is a UMD Banach space, then by \cite{Y17FourUMD}, for each $1<p<\infty$ we have that $\mathbb E \|N_{\infty}\|^p \leq \beta_{p, X}^p\mathbb E \|M_{\infty}\|^p$. Section \ref{sec:weakdifsubandgenmart} is devoted to the generalization of this result to continuous and general martingales. There we show that if both $M$ and $N$ are continuous, then $\mathbb E \|N_{\infty}\|^p \leq c_{p,X}^p\mathbb E \|M_{\infty}\|^p$, where the least admissible $c_{p,X}$ is within the interval $[\beta_{p, X}, \beta_{p, X}^2]$. Furthermore, using the Meyer-Yoeurp decomposition and estimates \eqref{eq:MeyYoeIntro} we show that for general \mbox{$X$-va}lu\-ed martingales $M$ and $N$ such that $N$ is weakly differentially subordinated to $M$ the following holds
\[
 (\mathbb E \|N_{\infty}\|^p)^{\frac 1p} \leq \beta_{p, X}^2(\beta_{p, X}+1) (\mathbb E \|M_{\infty}\|^p)^{\frac 1p}. 
\]

\smallskip

The weak differential subordination as a stronger version of the differential subordination is of interest in Harmonic Analysis. For instance, it was shown in \cite{Y17FourUMD} that sharp $L^p$-estimates for weakly differentially subordinated purely discontinuous martingales imply sharp estimates for the norms of a broad class of Fourier multipliers on $L^p(\mathbb R^d;X)$. Also there is a strong connection between the weak differential subordination of continuous martingales and the norm of the~Hilbert transform on $L^p(\mathbb R; X)$ (see \cite{Y17FourUMD} and Remark~\ref{rem:Hiltranfweakdiffsubcont}).
 
Alternative approaches to Fourier multipliers for functions with values in UMD spaces have been constructed from the differential subordination for purely discontinuous martingales (see Ba\~{n}uelos and Bogdan \cite{BB}, Ba\~{n}uelos, Bogdan and Bie\-la\-szew\-ski \cite{BBB}, and recent work \cite{Y17FourUMD}), and for continuous martingales (see McConnell \cite{McC84} and Geiss, Montgomery-Smith and Saksman \cite{GM-SS}). It remains open whether one can combine these two approaches using the general weak differential subordination theory.

\section{Preliminaries}
In the sequel we will omit proofs of some statements marked with a star (e.g.\ Lemma$^*$, Theorem$^*$, etc.) Please find the corresponding proofs after the references or in the supplement \cite{Y17MDSupp}.

\smallskip

We set the scalar field to be $\mathbb R$. We will use the {\em Kronecker symbol} $\delta_{ij}$, which is defined in the following way: $\delta_{ij} =1$ if $i=j$, and $\delta_{ij} =0$ if $i\neq j$. For each $p\in (1,\infty)$ we set $p'\in (1,\infty)$ and $p^* \in [2,\infty)$ to be such that $\frac 1p + \frac 1{p'}=1$ and $p^* = \max\{p, p'\}$. We set $\mathbb R_+ := [0,\infty)$.

\subsection{UMD Banach spaces}\label{subsec:UMD}\nopagebreak
A Banach space $X$ is called a {\em UMD space} if for some (equivalently, for all)
$p \in (1,\infty)$ there exists a constant $\beta>0$ such that
for every $n \geq 1$, every martingale
difference sequence $(d_j)^n_{j=1}$ in $L^p(\Omega; X)$, and every $\{-1,1\}$-valued sequence
$(\varepsilon_j)^n_{j=1}$
we have
\[
\Bigl(\mathbb E \Bigl\| \sum^n_{j=1} \varepsilon_j d_j\Bigr\|^p\Bigr )^{\frac 1p}
\leq \beta \Bigl(\mathbb E \Bigl \| \sum^n_{j=1}d_j\Bigr\|^p\Bigr )^{\frac 1p}.
\]
The least admissible constant $\beta$ is denoted by $\beta_{p,X}$ and is called the {\em UMD constant}. It is well-known (see \cite[Chapter 4]{HNVW1}) that $\beta_{p, X}\geq p^*-1$ and that $\beta_{p, H} = p^*-1$ for a Hilbert space $H$. 
 We refer the reader to \cite{Burk01,HNVW1,Rubio86,Pis16} for details.

The following proposition is a vector-valued version of \cite[Theorem 4.1]{Choi92}.

\begin{proposition}\label{prop:UMD01sequence}
 Let $X$ be a Banach space, $p\in (1,\infty)$. Then $X$ has the UMD property if and only if there exists $C>0$ such that for each $n\geq 1$, for every martingale
difference sequence $(d_j)^n_{j=1}$ in $L^p(\Omega; X)$, and every sequence
$(\varepsilon_j)^n_{j=1}$ such that $\varepsilon_j\in \{0,1\}$ for each $j=1,\ldots,n$
we have
\begin{align*}
 \Bigl(\mathbb E \Bigl\| \sum^n_{j=1} \varepsilon_j d_j\Bigr\|^p\Bigr )^{\frac 1p}
\leq C \Bigl(\mathbb E \Bigl \| \sum^n_{j=1}d_j\Bigr\|^p\Bigr )^{\frac 1p}.
\end{align*}
If this is the case, then the least admissible $C$ is in the interval $[\frac {\beta_{p, X}-1}{2}, \beta_{p, X}]$
\end{proposition}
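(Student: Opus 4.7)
The plan is to exploit the affine bijection between $\{-1,1\}$ and $\{0,1\}$: the map $\varepsilon_j \mapsto \tfrac{1+\varepsilon_j}{2}$ sends $\{-1,1\}$-sequences to $\{0,1\}$-sequences bijectively, with inverse $\eta_j \mapsto 2\eta_j - 1$. This should allow me to transfer the martingale transform estimate between the two cases at the cost of at most an additive unit and a factor of $2$, giving tight relations between the two constants.

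For the forward direction (UMD implies the $\{0,1\}$-version), I would take an arbitrary $\{0,1\}$-valued sequence $(\varepsilon_j)_{j=1}^n$, set $\sigma_j := 2\varepsilon_j - 1 \in \{-1,1\}$, and write
\[
\sum_{j=1}^n \varepsilon_j d_j \;=\; \tfrac{1}{2}\sum_{j=1}^n d_j \;+\; \tfrac{1}{2}\sum_{j=1}^n \sigma_j d_j.
\]
The triangle inequality in $L^p(\Omega;X)$ together with the UMD property applied to the second sum then yields the desired estimate with constant $\tfrac{\beta_{p,X}+1}{2}$. Since $\beta_{p,X}\geq p^*-1\geq 1$ (recorded right after the definition in Subsection \ref{subsec:UMD}), this is majorized by $\beta_{p,X}$, which matches the upper endpoint of the claimed interval.

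For the reverse direction, I would start from an arbitrary $\{-1,1\}$-valued sequence $(\varepsilon_j)_{j=1}^n$, set $\eta_j := \tfrac{1+\varepsilon_j}{2}\in\{0,1\}$, and expand
\[
\sum_{j=1}^n \varepsilon_j d_j \;=\; 2\sum_{j=1}^n \eta_j d_j \;-\; \sum_{j=1}^n d_j.
\]
Applying the hypothesized $\{0,1\}$-estimate with constant $C$ to the first sum and the triangle inequality in $L^p(\Omega;X)$ gives $\beta_{p,X} \leq 2C+1$, which simultaneously verifies the UMD property and produces the lower bound $C \geq \tfrac{\beta_{p,X}-1}{2}$.

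I do not foresee any genuine obstacle; the entire argument reduces to two one-line algebraic identities and two applications of the triangle inequality. The only point that needs care is the bookkeeping at the end: the two directions pin the least admissible $C$ to $[\tfrac{\beta_{p,X}-1}{2},\,\tfrac{\beta_{p,X}+1}{2}]\subseteq [\tfrac{\beta_{p,X}-1}{2},\,\beta_{p,X}]$, which is exactly the interval stated.
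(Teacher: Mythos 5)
Your argument is correct: the two affine identities $\sum_j \varepsilon_j d_j = \tfrac12\sum_j d_j + \tfrac12\sum_j(2\varepsilon_j-1)d_j$ and $\sum_j \varepsilon_j d_j = 2\sum_j \tfrac{1+\varepsilon_j}{2}d_j - \sum_j d_j$, combined with the triangle inequality in $L^p(\Omega;X)$ and $\beta_{p,X}\geq 1$, do yield both implications and place the least admissible $C$ in $[\tfrac{\beta_{p,X}-1}{2},\tfrac{\beta_{p,X}+1}{2}]\subseteq[\tfrac{\beta_{p,X}-1}{2},\beta_{p,X}]$. The paper offers no proof of this proposition (it only points to Choi's scalar result), and your elementary averaging argument is exactly the standard route one would take, in fact giving the slightly sharper upper bound $\tfrac{\beta_{p,X}+1}{2}$ in place of the stated $\beta_{p,X}$.
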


\subsection{Martingales and stopping times in continuous time}\label{subsec:prelimmart}
Let $(\Omega,\mathcal F, \mathbb P)$ be a probability space with a filtration $\mathbb F = (\mathcal F_t)_{t\geq 0}$ which satisfies the usual conditions. Then $\mathbb F$ is right-continuous, and the following proposition holds (see \cite{Y17FourUMD}):
\begin{proposition}\label{prop:cadlagvers}
 Let $X$ be a Banach space. Then any martingale $M:\mathbb R_+ \times \Omega \to X$ has a {\em c\`adl\`ag} version
\end{proposition}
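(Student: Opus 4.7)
The plan is to adapt the classical Doob regularization theorem for real-valued martingales, the new issue being to produce one full-measure event on which $q \mapsto M_q(\omega)$, restricted to $q\in\mathbb Q\cap[0,\infty)$, admits norm one-sided limits at every $t\geq 0$; right-continuity of $\mathbb F$ then packages these limits into a c\`adl\`ag modification.

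First I would reduce to a separable target. Since each $M_t$ is Bochner integrable, there is a separable closed subspace $X_t\subseteq X$ with $M_t\in X_t$ a.s., and taking $X_0:=\overline{\mathrm{span}}\bigcup_{t\in\mathbb Q\cap[0,\infty)}X_t$ yields a separable closed subspace such that, on an event of full probability, $M_q(\omega)\in X_0$ for every rational $q\geq 0$. Fix a countable family $\{x_n^*\}_{n\geq 1}\subset X^*$ that is norming for $X_0$, and a countable norm-dense subset $D\subset X_0$.

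Next I would apply three scalar tools simultaneously: the classical Doob regularization theorem to each real martingale $\langle M,x_n^*\rangle$, Doob's maximal inequality to the real submartingale $\|M\|$, and Doob's upcrossing inequality to each real submartingale $\|M-v\|$, $v\in D$. This yields a single event $\Omega_0$ of full measure on which, for every $t\geq 0$, every $n$, and every $v\in D$, the scalar maps $q\mapsto\langle M_q,x_n^*\rangle$ and $q\mapsto\|M_q-v\|$ have both one-sided limits along rationals, while $\sup_{q\in\mathbb Q\cap[0,T]}\|M_q(\omega)\|<\infty$ for every $T$. Combined with the backward martingale convergence theorem for Bochner-integrable random variables (applied to the backward martingale $(M_q)_{q\in\mathbb Q,\,q\downarrow t}$, using $\mathcal F_{t+}=\mathcal F_t$ from the usual conditions), which at every fixed $t$ furnishes an a.s.\ norm limit anchoring the values, these properties together force the existence of norm one-sided limits $\lim_{q\downarrow t}M_q(\omega)$ and $\lim_{q\uparrow t}M_q(\omega)$ at every $t\geq 0$ on $\Omega_0$.

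Finally, I would set $\widetilde M_t(\omega):=\lim_{q\downarrow t,\,q\in\mathbb Q}M_q(\omega)$ on $\Omega_0$ and $\widetilde M_t:=0$ elsewhere; $\mathcal F_t$-measurability of $\widetilde M_t$ follows from right-continuity of $\mathbb F$, c\`adl\`agness holds by construction, and combining the a.s.\ with the $L^1$ convergence of $(M_q)_{q\downarrow t}$ gives $\widetilde M_t=M_t$ a.s., so $\widetilde M$ is the required modification. The principal obstacle is the passage from scalar/weak regularity along rationals to genuine norm one-sided limits on a common full-measure event: scalar upcrossing control for $\|M-v\|$ alone cannot distinguish non-Cauchy norm behavior (consider orthonormal sequences in a Hilbert space), which is why the backward martingale theorem --- identifying a specific a.s.\ limit point for each fixed $t$ --- must be combined with the separability reduction and the countable upcrossing family to close the argument.
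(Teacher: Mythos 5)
The individual tools you list are all sound (separable reduction, countable norming family, upcrossings for the submartingales $\|M-v\|$, Doob's maximal inequality, backward martingale convergence for $\mathbb E(\xi\mid\mathcal G_n)$ with decreasing $\sigma$-fields, which indeed needs no RNP). The gap is the sentence asserting that these properties ``together force'' norm one-sided limits at \emph{every} $t\geq 0$ on the common event $\Omega_0$. The backward martingale theorem gives, for each \emph{fixed} $t$, an exceptional null set depending on $t$; only countably many of these (say, for rational $t$) can be absorbed into $\Omega_0$. After doing so, what you actually know pathwise on $\Omega_0$ is: one-sided limits along rationals of $\langle M_\cdot,x_n^*\rangle$ and of $\|M_\cdot-v\|$, $v\in D$, at every $t$; local norm-boundedness; and norm right-convergence along rationals at every \emph{rational} $t$. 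These conditions do not imply norm one-sided limits at every real $t$. For instance, in a Hilbert space with orthonormal basis $(e_k)$, take an irrational $t_0$, irrationals $a_k\downarrow t_0$, and the path $q\mapsto e_k$ for $q\in[a_{k+1},a_k)\cap\mathbb Q$ (constant below $t_0$ and beyond $a_1$): it is locally constant at every rational time (so all rational anchors hold), $\langle e_{k},x^*\rangle\to 0$ and $\|e_k-v\|\to(1+\|v\|^2)^{1/2}$ as $q\downarrow t_0$, and it is bounded, yet it has no norm right limit at $t_0$. So the pathwise combination you invoke cannot close the argument; this is precisely the hard point of the proposition (as your own orthonormal-sequence remark suggests), and it requires a probabilistic input that is uniform in $t$, not a family of fixed-$t$ anchors.

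For comparison, the paper itself gives no proof here but refers to \cite{Y17FourUMD}; the standard argument (and the one used there) avoids the issue as follows. Fix $T>0$ and simple functions $\xi_j\to M_T$ in $L^1(\Omega;X)$. Each $M^j:=(\mathbb E(\xi_j\mid\mathcal F_t))_{t\in[0,T]}$ is a finite linear combination of bounded real-valued martingales with coefficients in $X$, hence has a c\`adl\`ag version by the scalar Doob regularization theorem. Doob's weak-type maximal inequality applied to the nonnegative submartingale $\|M^j-M^k\|$ gives $\mathbb P(\sup_{t\leq T}\|M^j_t-M^k_t\|>\lambda)\leq \lambda^{-1}\mathbb E\|\xi_j-\xi_k\|$, so along a subsequence with $\mathbb E\|\xi_{j+1}-\xi_j\|\leq 4^{-j}$ the c\`adl\`ag versions converge a.s.\ uniformly on $[0,T]$ by Borel--Cantelli; a uniform limit of c\`adl\`ag paths is c\`adl\`ag, and since $\mathbb E\|M^j_t-M_t\|\leq\mathbb E\|\xi_j-M_T\|\to 0$ the limit is a version of $M$ on $[0,T]$; finally patch over $T\in\mathbb N$. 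Your separability reduction and the final measurability/identification step can be retained, but the middle of your argument should be replaced by an argument of this type, which controls all $t$ simultaneously through the maximal inequality rather than through fixed-$t$ backward convergence.
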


Let $1\leq p\leq \infty$. A martingale $M:\mathbb R_+ \times \Omega \to X$ is called an {\em $L^p$-martingale} if $M_t \in L^p(\Omega; X)$ for each $t\geq 0$, there exists an a.s.\ limit $M_{\infty} := \lim_{t\to \infty} M_t$, $M_{\infty} \in L^p(\Omega; X)$ and $M_t \to M_{\infty}$ in $L^p(\Omega; X)$ as $t\to \infty$. We will denote the space of all $X$-valued $L^p$-martingales on $\Omega$ by $\mathcal M_X^p(\Omega)$. For brevity we will use $\mathcal M_X^p$ instead. Notice that $\mathcal M_X^p$ is a Banach space with the given norm: $\|M\|_{\mathcal M_X^p} := \|M_{\infty}\|_{L^p(\Omega; X)}$ (see \cite{Kal,Jac79} and \cite[Chapter 1]{HNVW1}).

\begin{proposition}\label{prop:dualofMXp}
  Let $X$ be a Banach space with the Radon-Nikod\'ym property (e.g.\ reflexive), $1<p<\infty$. Then $(\mathcal M_X^p)^* = \mathcal M_{X^*}^{p'}$, and $\|M\|_{(\mathcal M_X^p)^*} = \|M\|_{\mathcal M_{X^*}^{p'}}$ for each $M \in \mathcal M_{X^*}^{p'}$.
\end{proposition}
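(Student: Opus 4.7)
The plan is to reduce the claim to the classical duality between Bochner $L^p$-spaces by identifying each martingale with its terminal value.

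First, set $\mathcal F_\infty := \sigma\bigl(\bigcup_{t \geq 0} \mathcal F_t\bigr)$ and consider the terminal-value map
\[
\iota_p \colon \mathcal M_X^p \to L^p(\Omega, \mathcal F_\infty, \mathbb P; X), \qquad M \mapsto M_\infty.
\]
By the definition of the norm on $\mathcal M_X^p$, the map $\iota_p$ is isometric; it is injective because a martingale is determined by its terminal value via $M_t = \mathbb E(M_\infty \mid \mathcal F_t)$. For surjectivity onto $L^p(\Omega, \mathcal F_\infty; X)$, given $\xi$ in this subspace I would define $M_t := \mathbb E(\xi \mid \mathcal F_t)$ as a Bochner conditional expectation; then $M$ is a martingale with $M_t \to \xi$ in $L^p(\Omega; X)$ as $t \to \infty$ (the standard convergence result for closed Bochner martingales), and Proposition \ref{prop:cadlagvers} supplies a c\`adl\`ag version. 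Thus $\iota_p$ is an isometric isomorphism $\mathcal M_X^p \cong L^p(\Omega, \mathcal F_\infty; X)$, and analogously $\iota_{p'} \colon \mathcal M_{X^*}^{p'} \cong L^{p'}(\Omega, \mathcal F_\infty; X^*)$.

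Second, I would apply the classical Bochner-space duality
\[
L^p(\Omega, \mathcal F_\infty, \mathbb P; X)^* \;=\; L^{p'}(\Omega, \mathcal F_\infty, \mathbb P; X^*),
\]
with pairing $(f,g) \mapsto \mathbb E \langle f, g \rangle$. This identification is valid whenever $X^*$ has the Radon-Nikod\'ym property, which holds in particular when $X$ is reflexive (in which case $X$ and $X^*$ both have RNP). Composing with $\iota_p$ and $\iota_{p'}$ gives the stated pairing $\langle M, N \rangle = \mathbb E \langle M_\infty, N_\infty \rangle$ for $M \in \mathcal M_X^p$ and $N \in \mathcal M_{X^*}^{p'}$, and transports the isometry of Bochner-space duality to the desired identity $\|N\|_{(\mathcal M_X^p)^*} = \|N\|_{\mathcal M_{X^*}^{p'}}$.

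The substantive content is entirely the Bochner-space duality; the role of the martingale language is just repackaging. The one point requiring attention is Step~1, namely verifying that every $\mathcal F_\infty$-measurable $L^p$ function is the terminal value of a unique element of $\mathcal M_X^p$, which hinges on the $L^p$-convergence of closed Bochner-valued martingales and on the c\`adl\`ag modification provided by Proposition \ref{prop:cadlagvers}. The main technical caveat is that the Bochner duality in Step~2 requires RNP of $X^*$ rather than of $X$; the hypothesis of the proposition is to be read with this in mind (reflexivity being the case of primary interest).
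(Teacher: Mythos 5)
Your proposal is correct and is essentially the paper's own argument: the paper likewise identifies $\mathcal M_X^p$ isometrically with the Bochner space of terminal values via $\xi\mapsto(\mathbb E(\xi|\mathcal F_t))_{t\geq 0}$ and then invokes the duality $L^p(\Omega;X)^*=L^{p'}(\Omega;X^*)$ (citing \cite[Proposition 1.3.3]{HNVW1}). Your closing caveat that the Bochner duality really uses the Radon--Nikod\'ym property of $X^*$ (automatic in the reflexive case the paper has in mind) is a fair and correct reading of the hypothesis.
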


A random variable $\tau:\Omega \to \mathbb R_+$ is called an {\em optional stopping time} (or just a {\em stopping time}) if $\{\tau\leq t\} \in \mathcal F_t$ for each $t\geq 0$. With an optional stopping time $\tau$ we associate a $\sigma$-field $\mathcal F_{\tau} = \{A\in \mathcal F_{\infty}: A\cap \{\tau\leq t\}\in \mathcal F_{t}, t\in\mathbb R_+\}$. Note that $M_{\tau}$ is strongly $\mathcal F_{\tau}$-measurable for any local martingale $M$. We refer to \cite[Chapter 7]{Kal} for details.

Due to the existence of a c\`adl\`ag version of a martingale $M:\mathbb R_+ \times \Omega \to X$, we can define an $X$-valued random variables $M_{\tau-}$ and $\Delta M_{\tau}$ for any stopping time $\tau$ in the following way: $M_{\tau-} = \lim_{\eps \to 0}M_{(\tau - \eps)\vee 0}$, $\Delta M_{\tau} = M_{\tau} -  M_{\tau-}$.

\subsection{Quadratic variation}
Let $(\Omega, \mathcal F, \mathbb P)$ be a probability space with a filtration $\mathbb F = (\mathcal F_t)_{t\geq 0}$ that
satisfies the usual conditions, $H$ be a Hilbert space. Let $M:\mathbb R_+ \times \Omega \to H$ be a local martingale. We define a {\em quadratic variation} of $M$ in the following way:
\begin{equation}\label{eq:defquadvar}
 [M]_t  := \mathbb P-\lim_{{\rm mesh}\to 0}\sum_{n=1}^N \|M(t_n)-M(t_{n-1})\|^2,
\end{equation}
where the limit in probability is taken over partitions $0= t_0 < \ldots < t_N = t$. Note that $[M]$ exists and is nondecreasing a.s. The reader can find more on quadratic variations in \cite{MetSemi,MP} for the vector-valued setting, and in \cite{Kal,Prot,MP} for the real-valued setting.

For any martingales $M, N:\mathbb R_+ \times \Omega \to H$ we can define a {\em covariation} $[M,N]:\mathbb R_+ \times \Omega \to \mathbb R$ as $[M,N] := \frac{1}{4}([M+N]-[M-N])$.
Since $M$ and $N$ have c\`adl\`ag versions, $[M,N]$ has a c\`adl\`ag version as well (see \cite[Theorem I.4.47]{JS} and \cite{MetSemi}).
\begin{remark}[\cite{MetSemi}]\label{rem:covariation}
 The process $\langle M,N\rangle - [M,N]$ is a local martingale.
\end{remark}

\subsection{Continuous martingales}\label{subsec:prelimcontmart}
 Let $X$ be a Banach space. A martingale $M:\mathbb R_+ \times \Omega \to X$ is called {\em continuous} if $M$ has continuous paths.

\begin{remark}[\cite{Kal,MP}]\label{rem:qvcont}
 If $X$ is a Hilbert space, $M,N:\mathbb R_+ \times \Omega \to X$ are continuous martingales, then $[M, N]$ has a continuous version.
\end{remark}

Let $1\leq p\leq \infty$. We will denote the linear space of all continuous $X$-valued $L^p$-martingales on $\Omega$ which start at zero by $\mathcal M_X^{p,c}(\Omega)$. For brevity we will write $\mathcal M_X^{p,c}$ instead of $\mathcal M_X^{p,c}(\Omega)$ since $\Omega$ is fixed. Analogously to \cite[Lemma 17.4]{Kal} by applying Doob's maximal inequality \cite[Theorem 3.2.2]{HNVW1} one can show the following proposition.

\begin{proposition}\label{prop:M^p,ciscomplete}
 Let $X$ be a Banach space, $p\in(1,\infty)$. Then $\mathcal M_X^{p,c}$ is a Banach space with the following norm: $\|M\|_{\mathcal M_X^{p,c}} := \|M_{\infty}\|_{L^p(\Omega; X)}$.
\end{proposition}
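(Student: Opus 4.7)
My plan is to verify the three axioms of a normed space and then prove completeness, with Doob's maximal inequality doing the heavy lifting for the latter.

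\textbf{Normed space structure.} Homogeneity and the triangle inequality are inherited from the norm on $L^p(\Omega;X)$. For non-degeneracy, if $\|M\|_{\mathcal M_X^{p,c}} = 0$ then $M_\infty = 0$ a.s.; since $M_t = \mathbb{E}[M_\infty\mid\mathcal F_t]$ a.s.\ for every $t\geq 0$ (as $M$ is an $L^p$-martingale with $M_t\to M_\infty$ in $L^p$), each $M_t$ vanishes a.s., and continuity then forces $M\equiv 0$ as a process. Thus $\|\cdot\|_{\mathcal M_X^{p,c}}$ is a genuine norm.

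\textbf{Completeness.} Let $(M^n)_{n\ge 1}$ be Cauchy in $\mathcal M_X^{p,c}$. Then $(M^n_\infty)_{n\ge 1}$ is Cauchy in $L^p(\Omega;X)$ and hence converges to some $M_\infty\in L^p(\Omega;X)$. For each pair $n,m$, the difference $M^n - M^m$ is a continuous $X$-valued $L^p$-martingale starting at $0$, so by Doob's maximal inequality (\cite[Theorem 3.2.2]{HNVW1})
\[
\mathbb{E}\sup_{t\ge 0}\|M^n_t-M^m_t\|^p \;\le\; \Bigl(\tfrac{p}{p-1}\Bigr)^{p}\mathbb{E}\|M^n_\infty-M^m_\infty\|^p.
\]
Thus $(M^n)$ is Cauchy in the complete space $L^p(\Omega; C_b(\mathbb R_+;X))$-seminorm sense; more concretely, we extract a subsequence $(n_k)$ with $\mathbb{E}\sup_{t\ge 0}\|M^{n_{k+1}}_t-M^{n_k}_t\|^p \le 2^{-k}$. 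By Borel--Cantelli the series $\sum_k \sup_{t\ge 0}\|M^{n_{k+1}}_t - M^{n_k}_t\|$ converges a.s., so on a set of full measure the sequence of continuous paths $(M^{n_k}(\cdot,\omega))$ is uniformly Cauchy on $\mathbb R_+$ and therefore converges uniformly to a continuous $X$-valued path; define $M(\cdot,\omega)$ to be this limit on the good set and $0$ elsewhere.

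\textbf{Identifying the limit.} The process $M$ is continuous by construction and adapted because the filtration satisfies the usual conditions (in particular $\mathcal F_t$ is complete). For each fixed $t$, $M^{n_k}_t\to M_t$ a.s.; using the Doob bound above together with $L^p$-convergence of the terminal values, $M^{n_k}_t\to M_t$ in $L^p(\Omega;X)$ as well, so the martingale property passes to the limit, $M_t=\mathbb{E}[M_\infty\mid\mathcal F_t]$ a.s., with $M_0=0$. Finally, applying Doob once more and letting $m\to\infty$ along the subsequence yields
\[
\mathbb{E}\sup_{t\ge 0}\|M^n_t-M_t\|^p \;\le\; \Bigl(\tfrac{p}{p-1}\Bigr)^{p}\mathbb{E}\|M^n_\infty-M_\infty\|^p \;\longrightarrow\; 0,
\]
so in particular $\|M^n - M\|_{\mathcal M_X^{p,c}}\to 0$, proving completeness.

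The only real obstacle is passing continuity to the limit; the subsequence extraction plus Borel--Cantelli argument above is exactly what is needed, and it is made available by the one-sided control that Doob's inequality provides between the terminal-value norm and the supremum norm.
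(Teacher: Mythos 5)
Your proof is correct and follows essentially the same route the paper indicates: it sketches no details but points to Kallenberg's Lemma 17.4 together with Doob's maximal inequality, which is exactly your argument (Doob bound on the running supremum, subsequence with summable increments, a.s. uniform convergence of paths to a continuous limit, identification of the limit martingale). No genuine differences or gaps worth flagging.
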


\subsection{Purely discontinuous martingales}\label{subsec:prelimpdmart}
 An increasing c\`adl\`ag process $A:\mathbb R_+ \times \Omega \to \mathbb R$ is called {\em pure jump} if a.s.\ for each $t\geq 0$, $A_t = A_0 + \sum_{s=0}^t \Delta A_s$.
A local martingale $M:\mathbb R_+\times \Omega \to \mathbb R$ is called {\em purely discontinuous} if $[M]$ is a pure jump process.
The reader can find more on purely discontinuous martingales in \cite{JS,Kal}. We leave the following evident lemma without proof.

\begin{lemma}\label{lemma:A^dA^c}
 Let $A:\mathbb R_+ \times \Omega \to \mathbb R_+$ be an increasing adapted c\`adl\`ag process such that $A_0 =0$. Then there exist unique up to indistinguishability increasing adapted c\`adl\`ag processes $A^c, A^d:\mathbb R_+ \times \Omega \to \mathbb R_+$ such that $A^c$ is continuous a.s., $A^d$ is pure jump a.s., $A^c_0 = A^d_0=0$ and $A= A^c + A^d$.
\end{lemma}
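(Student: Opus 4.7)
The plan is to construct $A^d$ explicitly as the sum of the jumps of $A$ and then set $A^c := A - A^d$.

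First I would define
\[
 A^d_t := \sum_{0 < s \leq t} \Delta A_s, \qquad t \geq 0.
\]
Since $A$ is c\`adl\`ag, in any bounded interval the set of jumps of size greater than $\varepsilon$ is finite for every $\varepsilon>0$, so the sum is a countable sum of nonnegative terms. Because $A$ is increasing with $A_0=0$, a telescoping argument over an arbitrary finite partition of $[0,t]$ shows $\sum \Delta A_s \leq A_t < \infty$ a.s., so $A^d_t$ is finite and $A^d_t \leq A_t$. Adaptedness is immediate since $\Delta A_s = A_s - A_{s-}$ is $\mathcal F_s$-measurable and the sum is a countable supremum of sums over finite subsets of rationals approximating the jumps (or one may rewrite $A^d_t = \lim_{\varepsilon \downarrow 0} \sum_{0<s\leq t} \Delta A_s \one_{\{\Delta A_s > \varepsilon\}}$).

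Next I would verify that $A^d$ is c\`adl\`ag. It is increasing, hence has left and right limits; right-continuity follows from the fact that for any $t$ and any sequence $t_n \downarrow t$ one has $A^d_{t_n} - A^d_t = \sum_{t < s \leq t_n} \Delta A_s \leq A_{t_n} - A_t \to 0$ by right-continuity of $A$. Now set $A^c_t := A_t - A^d_t$. Then $A^c$ is adapted and c\`adl\`ag with $A^c_0 = 0$, and for every $t$,
\[
 \Delta A^c_t = \Delta A_t - \Delta A^d_t = \Delta A_t - \Delta A_t = 0,
\]
so $A^c$ is continuous. To see that $A^c$ is increasing, fix $s<t$ and note that for any finite partition $s = r_0 < r_1 < \cdots < r_n = t$ we have $A_t - A_s = \sum_{k=1}^n (A_{r_k} - A_{r_{k-1}}) \geq \sum_k \Delta A_{r_k}$; taking the supremum over partitions containing the large jumps in $(s,t]$ gives $A_t - A_s \geq A^d_t - A^d_s$, so $A^c_t - A^c_s \geq 0$.

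For uniqueness, suppose $A = B^c + B^d$ is another such decomposition. Then $A^c - B^c = B^d - A^d$ is both continuous (as the difference of two continuous processes) and pure jump (as the difference of two pure jump processes) with value $0$ at $0$. A continuous increasing process $C$ with $C_0 = 0$ whose pure jump version is itself must be identically zero, and applied to the increasing and decreasing parts of $B^d - A^d$ this forces $A^d = B^d$ and hence $A^c = B^c$ up to indistinguishability.

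The only mildly delicate point is the verification that $A^c$ is increasing; everything else is formal bookkeeping with c\`adl\`ag paths.
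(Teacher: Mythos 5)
Your construction is correct, and since the paper leaves this lemma without proof (it is explicitly labelled as evident), your argument is precisely the standard one the author has in mind: set $A^d_t=\sum_{0<s\leq t}\Delta A_s$, check finiteness, adaptedness and monotonicity of the remainder $A^c=A-A^d$ via the partition/telescoping bound $\sum_{s<r\leq t}\Delta A_r\leq A_t-A_s$, and deduce uniqueness from the fact that a process which is both continuous and a sum of its jumps must vanish. One small remark on the uniqueness step: the detour through the increasing and decreasing parts of $B^d-A^d$ is unnecessary (and, as ``pure jump'' is defined only for increasing processes, you would additionally have to check that those Jordan parts are pure jump); it is cleaner to note that $B^d-A^d=A^c-B^c$ is continuous, hence $\Delta B^d_s=\Delta A^d_s$ for all $s$, and since a pure jump increasing process starting at $0$ equals the sum of its jumps, this gives $B^d=A^d$ and $B^c=A^c$ directly.
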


\begin{remark}\label{rem:YoeMeyR}
 According to the works \cite{Mey76} by Meyer and \cite{Yoe76} by Yoeurp (see also \cite[Theorem~26.14]{Kal}), any martingale $M:\mathbb R_+ \times \Omega \to \mathbb R$ can be uniquely decomposed into a sum of a purely discontinuous local martingale $M^d$ and a continuous local martingale $M^c$ such that $M^c_0 =0$. Moreover, $[M]^c= [M^c]$ and $[M]^d = [M^d]$, where $[M]^c$ and $[M]^d$ are defined as in Lemma \ref{lemma:A^dA^c}.
\end{remark}

\begin{corollary}\label{cor:cont=purdiscR}
 Let $M:\mathbb R_+ \times \Omega \to\mathbb R$ be a martingale which is both continuous and purely discontinuous. Then $M=M_0$ a.s.
\end{corollary}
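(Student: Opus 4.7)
The plan is to invoke the uniqueness clause of the Meyer--Yoeurp decomposition recalled in Remark \ref{rem:YoeMeyR}. Since $M$ is assumed to be purely discontinuous, the trivial splitting $M = M + 0$ already satisfies the conditions of that remark (purely discontinuous part $M$, continuous part $0$ starting at $0$), so any other such splitting must agree with it.

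First I would exhibit a second valid splitting. Write $M = M_0 + (M - M_0)$. The process $M - M_0$ is a continuous local martingale vanishing at $0$, while the constant process $M_0$ is a purely discontinuous local martingale: its quadratic variation is identically zero, and the zero process is trivially pure jump (all jumps equal $0$). Hence setting $M^d = M_0$ and $M^c = M - M_0$ gives one Meyer--Yoeurp decomposition of $M$, while $M^d = M$ and $M^c = 0$ gives another. Uniqueness then forces $M = M_0$ almost surely.

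As an alternative, one can argue directly at the level of quadratic variation. Continuity of $M$ yields $\Delta [M]_t = (\Delta M_t)^2 = 0$, so $[M]$ is continuous, while pure discontinuity of $M$ makes $[M]$ a pure jump process by definition. Applying the uniqueness half of Lemma \ref{lemma:A^dA^c} to the two decompositions $[M] = [M] + 0$ and $[M] = 0 + [M]$ forces $[M] \equiv 0$. A standard localization argument together with the identity $\mathbb E |M^{\tau_n}_t - M_0|^2 = \mathbb E [M]^{\tau_n}_t$ on a reducing sequence $\tau_n \uparrow \infty$ then yields $M = M_0$ a.s.

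Neither route poses a real obstacle; the only minor point to verify is that a constant ($\mathcal F_0$-measurable) martingale qualifies as purely discontinuous, which is immediate since its quadratic variation vanishes. I would present the first argument as the main proof because it is essentially a one-line consequence of Remark \ref{rem:YoeMeyR}.
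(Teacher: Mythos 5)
Your main argument is correct and is essentially the paper's own: the paper leaves this corollary as an immediate consequence of the uniqueness in Remark \ref{rem:YoeMeyR}, and its proof of the analogous Corollary \ref{cor:M=M_0ifMbothaccjandqlc} is exactly your comparison of the two trivial splittings (there phrased with the normalization $M_0=0$, which is equivalent to your choice $M^d=M_0$, $M^c=M-M_0$). Your alternative route via $[M]$ being simultaneously continuous and pure jump, hence zero, is also sound but is not needed here.
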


 \begin{proposition*}\label{thm:purdiscorthtoanycont1}
  A martingale $M:\mathbb R_+\times \Omega \to \mathbb R$ is purely discontinuous if and only if $M N$ is a martingale for any continuous bounded martingale $N:\mathbb R_+ \times \Omega \to \mathbb R$ with $N_0 = 0$. 
\end{proposition*}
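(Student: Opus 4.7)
The plan is to prove both directions using It\^o's integration by parts formula and the existence of the Meyer--Yoeurp decomposition recorded in Remark \ref{rem:YoeMeyR}, together with the fact that a continuous local martingale has quadratic variation equal to its continuous quadratic variation.

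For the forward direction, assume $M$ is purely discontinuous and let $N$ be a continuous bounded martingale with $N_0 = 0$. First I would apply integration by parts to obtain
\[
 M_t N_t = \int_0^t M_{s-}\,dN_s + \int_0^t N_{s-}\,dM_s + [M,N]_t.
\]
The two stochastic integrals are local martingales. To handle $[M,N]$, I would decompose $[M,N]_t = [M,N]^c_t + \sum_{0 < s \leq t}\Delta M_s\,\Delta N_s$. Since $N$ is continuous, the jump sum is identically zero; since $M$ is purely discontinuous, $[M]^c = 0$ (by Remark \ref{rem:YoeMeyR}), and by the Kunita--Watanabe inequality $|[M,N]^c|^2 \leq [M]^c[N]^c = 0$. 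Hence $[M,N]=0$ and $MN$ is a local martingale. To upgrade to a true martingale, I would observe that $|M_{\tau\wedge t} N_{\tau\wedge t}| \leq \|N\|_\infty |M_{\tau\wedge t}|$, and that $\{M_{\tau\wedge t}\}_\tau$ is a uniformly integrable family (being a family of conditional expectations of the single integrable random variable $M_t$). Hence $\{M_{\tau\wedge t} N_{\tau\wedge t}\}_\tau$ is uniformly integrable, so the localizing sequence yields the martingale property in the limit.

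For the converse, assume $MN$ is a martingale for every such $N$. By Remark \ref{rem:YoeMeyR} write $M = M^c + M^d$ with $M^c$ continuous, $M^c_0 = 0$ and $M^d$ purely discontinuous. For each $n\geq 1$ define the stopping time $\tau_n := \inf\{t\geq 0 : |M^c_t|\geq n\}$ and set $N^{(n)} := M^c_{\cdot\wedge \tau_n}$. Then $N^{(n)}$ is a continuous, bounded local martingale starting at zero, hence a bounded martingale. By assumption $MN^{(n)}$ is a martingale. Applying integration by parts again,
\[
 M_t N^{(n)}_t = \int_0^t M_{s-}\,dN^{(n)}_s + \int_0^t N^{(n)}_{s-}\,dM_s + [M,N^{(n)}]_t,
\]
and the bilinearity of the covariation gives $[M,N^{(n)}] = [M^c,N^{(n)}] + [M^d,N^{(n)}]$. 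The forward direction applied to $M^d$ and $N^{(n)}$ shows $[M^d,N^{(n)}]=0$, while $[M^c, N^{(n)}] = [M^c]_{\cdot \wedge \tau_n}$ by the definition of $N^{(n)}$ and the continuity of $M^c$. Rearranging,
\[
 [M^c]_{\cdot\wedge \tau_n} = M N^{(n)} - \int M_{-}\,dN^{(n)} - \int N^{(n)}_{-}\,dM
\]
is a local martingale. Since the left-hand side is nondecreasing and starts at zero, it must vanish identically. Letting $n\to\infty$ gives $[M^c] \equiv 0$, whence the continuous local martingale $M^c$ is constant and equal to $M^c_0 = 0$. Therefore $M = M^d$ is purely discontinuous.

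The main obstacle I expect is the bookkeeping around integrability: verifying that $MN$ is a \emph{true} martingale (not merely local) in the forward direction, and ensuring that the stochastic integrals $\int M_-\,dN^{(n)}$ and $\int N^{(n)}_-\,dM$ in the converse are well-defined local martingales without any extra moment hypothesis on $M$. The boundedness of $N$ and $N^{(n)}$ is exactly what drives both points, but it needs to be combined with the standard uniform integrability of $\{M_{\tau\wedge t}\}$ and with the fact that $M_-$ is locally bounded along a reducing sequence, so that the ordinary integration by parts identity is legitimate.
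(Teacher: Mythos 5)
Your proof is correct and follows essentially the same route as the paper's: the forward direction rests on $[M,N]=0$ for purely discontinuous $M$ and continuous $N$ together with an integrability upgrade from local to true martingale, and the converse uses the real-valued Meyer--Yoeurp decomposition of Remark \ref{rem:YoeMeyR}, localizes the continuous part to a bounded continuous martingale, and combines the hypothesis with the forward direction. The only cosmetic difference is the final step of the converse: the paper observes that $\bigl(MN^{\tau_n}-(M-N)N^{\tau_n}\bigr)^{\tau_n}=(N^{\tau_n})^2$ is a nonnegative martingale starting at zero, hence zero, whereas you reach the same conclusion via integration by parts and the vanishing of the nondecreasing local martingale $[M^c]^{\tau_n}$.
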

 
 Note that some authors take this equivalent condition as the definition of a purely discontinuous martingale, see e.g.\ \cite[Definition I.4.11]{JS} and \cite[Chapter I]{Jac79}.

\begin{definition}\label{def:purelydiscXvalued}
 Let $X$ be a Banach space, $M:\mathbb R_+\times \Omega \to X$ be a local martingale. Then $M$ is called {\em purely discontinuous} if for each $x^* \in X^*$ the local martingale $\langle M, x^*\rangle$ is purely discontinuous. 
\end{definition}

\begin{remark}\label{rem:YoeMey}
 Let $X$ be finite dimensional. Then similarly to Remark \ref{rem:YoeMeyR} any martingale $M:\mathbb R_+ \times \Omega \to X$ can be uniquely decomposed into a sum of a purely discontinuous local martingale $M^d$ and a continuous local martingale $M^c$ such that $M^c_0 =0$.
\end{remark}

\begin{remark}\label{rem:purdiscorthtoanycont2}
 Analogously to Proposition \ref{thm:purdiscorthtoanycont1}, a martingale $M:\mathbb R_+\times \Omega \to X$ is purely discontinuous if and only if $\langle M, x^*\rangle N$ is a martingale for any $x^* \in X^*$ and any continuous bounded martingale $N:\mathbb R_+ \times \Omega \to \mathbb R$ such that $N_0 = 0$. 
\end{remark}

Let $p\in [1,\infty]$. We will denote the linear space of all purely discontinuous $X$-valued $L^p$-martingales on $\Omega$ by $\mathcal M_X^{p,d}(\Omega)$. Since $\Omega$ is fixed, we will use $\mathcal M_X^{p,d}$ instead.
The scalar case of the next result have been presented in \cite[Lemme I.2.12]{Jac79}.

\begin{proposition}\label{thm:M^p,discomplete}
 Let $X$ be a Banach space, $p\in (1,\infty)$. Then $\mathcal M_X^{p, d}$ is a Banach space with a norm defined as follows: $\|M\|_{\mathcal M_X^{p, d}}:= \|M_{\infty}\|_{L^p(\Omega; X)}$.
\end{proposition}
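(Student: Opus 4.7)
The plan is to realize $\mathcal M_X^{p,d}$ as a closed linear subspace of the Banach space $\mathcal M_X^p$, whose completeness is already recorded in the discussion preceding Proposition \ref{prop:dualofMXp}. Linearity is clear, since for $M, M' \in \mathcal M_X^{p,d}$ and scalars $\alpha,\beta$, one has $\langle \alpha M + \beta M', x^*\rangle = \alpha \langle M, x^*\rangle + \beta \langle M', x^*\rangle$ for every $x^* \in X^*$, and the sum of two real-valued purely discontinuous martingales is purely discontinuous (e.g.\ directly from the characterization in Proposition$^*$ \ref{thm:purdiscorthtoanycont1}). So everything reduces to showing: if $(M^n) \subset \mathcal M_X^{p,d}$ and $M^n \to M$ in $\mathcal M_X^p$, then $M \in \mathcal M_X^{p,d}$.

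To do this, I would use the characterization of Remark \ref{rem:purdiscorthtoanycont2}: I need to check that for every $x^* \in X^*$ and every continuous bounded martingale $N:\mathbb R_+ \times \Omega \to \mathbb R$ with $N_0 = 0$, the real-valued process $\langle M, x^*\rangle N$ is a martingale. First, from $M^n_\infty \to M_\infty$ in $L^p(\Omega;X)$ and the fact that $M^n_t = \mathbb E(M^n_\infty \mid \mathcal F_t)$, $M_t = \mathbb E(M_\infty \mid \mathcal F_t)$, one deduces $M^n_t \to M_t$ in $L^p(\Omega;X)$ for every $t \geq 0$ (conditional expectation is a contraction on $L^p(\Omega;X)$). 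Consequently $\langle M^n_t, x^*\rangle \to \langle M_t, x^*\rangle$ in $L^p(\Omega)$, and since $N$ is uniformly bounded, $\langle M^n_t, x^*\rangle N_t \to \langle M_t, x^*\rangle N_t$ in $L^1(\Omega)$.

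Now the martingale property passes to the limit: for any $0\leq s\leq t$ and $A \in \mathcal F_s$, the identity
\[
\mathbb E\bigl[\langle M^n_t, x^*\rangle N_t \mathbf 1_A\bigr] = \mathbb E\bigl[\langle M^n_s, x^*\rangle N_s \mathbf 1_A\bigr],
\]
which holds because $M^n \in \mathcal M_X^{p,d}$, survives the $L^1$-limit on both sides, giving the corresponding identity for $M$. Hence $\langle M, x^*\rangle N$ is a martingale, and by Remark \ref{rem:purdiscorthtoanycont2} the limit $M$ is purely discontinuous; so $\mathcal M_X^{p,d}$ is closed in $\mathcal M_X^p$, hence complete, with the stated norm.

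The only mildly delicate point is the passage from $L^p$-convergence of terminal values to $L^p$-convergence at each finite $t$; this is immediate for Bochner-integrable random variables, but worth stating carefully. Beyond that, the argument is routine, and I do not expect a genuine obstacle.
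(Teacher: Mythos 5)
Your argument is correct, and it is organized the same way as the paper's proof at the top level (realize $\mathcal M_X^{p,d}$ as a subspace of the Banach space $\mathcal M_X^p$ and show it is closed under $L^p$-convergence of terminal values), but the key closedness step is handled differently. The paper passes immediately to the scalar case: by Definition \ref{def:purelydiscXvalued} it suffices that each $\langle M, x^*\rangle$ be purely discontinuous, and this is obtained by citing Jacod's Lemme I.2.12, i.e.\ the closedness of real-valued purely discontinuous $L^p$-martingales in $\mathcal M_{\mathbb R}^p$, applied to $\langle M^n, x^*\rangle \to \langle M, x^*\rangle$. You instead prove closedness from scratch via the orthogonality characterization of Remark \ref{rem:purdiscorthtoanycont2}: the martingale property of $\langle M^n, x^*\rangle N$ survives the limit because conditional expectation is an $L^p(\Omega;X)$-contraction (so $M^n_t \to M_t$ in $L^p$ for each $t$) and $N$ is uniformly bounded (so the products converge in $L^1$ and the defining identities over $A\in\mathcal F_s$ pass to the limit). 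This is a sound and self-contained route — in effect you reprove the scalar closedness result for $p>1$ rather than quote it — at the cost of leaning on Remark \ref{rem:purdiscorthtoanycont2}, which the paper states without proof as the analogue of Proposition \ref{thm:purdiscorthtoanycont1}; the paper's proof trades that for an external citation. The auxiliary points you flag (linearity via the product characterization, and $L^p$-convergence at each finite $t$) are indeed routine and correctly handled.
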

\begin{proof}
Let $(M^n)_{n\geq 1}$ be a sequence of purely discontinuous $X$-valued $L^p$-martingales such that $(M^n_{\infty})_{n\geq 1}$ is a Cauchy sequence in $L^p(\Omega; X)$. Let $\xi\in L^p(\Omega; X)$ be such that $\lim_{n\to \infty}M^n_{\infty} =\xi$. Define a martingale $M:\mathbb R_+ \times \Omega \to X$ as follows: $M = (M_s)_{s\geq 0} = (\mathbb E(\xi|\mathcal F_s))_{s\geq 0}$. Let us show that $M \in \mathcal M_X^{p,d}$. First notice that $\|M_{\infty}\|_{L^p(\Omega; X)} = \|\xi\|_{L^p(\Omega; X)} < \infty$. Further for each $x^* \in X^*$ by \cite[Lemme I.2.12]{Jac79} we have that $\langle M, x^*\rangle$ as a limit of real-valued purely discontinuous martingales $(\langle M^n, x^*\rangle)_{n\geq 1}$ in $\mathcal M_{\mathbb R}^p$ is purely discontinuous. Therefore $M$ is purely discontinuous by the definition.
\end{proof}

\begin{lemma}\label{lemma:contpuredisczero}
 Let $X$ be a Banach space, $M:\mathbb R_+ \times \Omega \to X$ be a martingale such that $M$ is both continuous and purely discontinuous. Then $M = M_0$ a.s.
\end{lemma}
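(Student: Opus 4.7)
The plan is to reduce the statement to the real-valued case (Corollary \ref{cor:cont=purdiscR}) by testing against functionals, and then to handle the measure-theoretic issue that the null set on which $\langle M_t, x^*\rangle=\langle M_0, x^*\rangle$ depends on $x^*$.

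First, fix any $x^* \in X^*$. Then $\langle M, x^*\rangle:\mathbb R_+\times \Omega \to \mathbb R$ is a martingale which has continuous paths (because $x^*$ is continuous and $M$ is continuous) and is purely discontinuous (by Definition \ref{def:purelydiscXvalued}). Corollary \ref{cor:cont=purdiscR} therefore gives $\langle M_t, x^*\rangle = \langle M_0, x^*\rangle$ for all $t\geq 0$ almost surely, with an exceptional null set $N_{x^*}$ depending on $x^*$.

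The main obstacle is that $X$ need not be separable, so one cannot naively take a union of $N_{x^*}$ over all $x^* \in X^*$. I would bypass this using strong measurability: for each rational $t\geq 0$, $M_t$ takes values almost surely in some separable subspace of $X$. Let $X_0 \subset X$ be the closed linear span of the union of these countably many separable supports; then $X_0$ is separable, and by continuity of paths together with the density of the rationals in $\mathbb R_+$, almost surely $M_t \in X_0$ for every $t\geq 0$. Choose a sequence $(x^*_n)_{n\geq 1}\subset X^*$ that is norming (equivalently, separating) on $X_0$, which exists by the Hahn-Banach theorem applied in the separable space $X_0$.

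Applying the first step to each $x^*_n$ and taking the (countable) union of the corresponding null sets, we obtain one common null set off which $\langle M_t - M_0, x^*_n\rangle = 0$ for every $n\geq 1$ and every $t\geq 0$. Since $M_t - M_0 \in X_0$ almost surely and $(x^*_n)$ separates the points of $X_0$, this forces $M_t = M_0$ for all $t\geq 0$ almost surely, completing the proof.
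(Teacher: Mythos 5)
Your proof is correct and follows essentially the same route as the paper, which simply reduces to the real-valued Corollary \ref{cor:cont=purdiscR} by testing against functionals $x^*\in X^*$. The only difference is that you spell out the measure-theoretic detail (restriction to a separable subspace and a countable separating sequence to obtain a single null set), which the paper leaves implicit in its one-line proof.
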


\begin{proof}
Follows analogously Corollary \ref{cor:cont=purdiscR}.
\end{proof}

\subsection{Time-change}
A nondecreasing, right-continuous family of stopping times $\tau =(\tau_s)_{s\geq 0}$ is called
a  \textit{random time-change}. If $\mathbb F$ is right-continuous, then
according to \cite[Lemma~7.3]{Kal} the same holds true
for the  \textit{induced filtration} $\mathbb G = (\mathcal G_s)_{s \geq 0} = (\mathcal F_{\tau_s})_{s\geq 0}$
(see more in \cite[Chapter~7]{Kal}).
Let $X$ be a Banach space. A~martingale $M:\mathbb R_+ \times \Omega \to X$ is said to be  \textit{$\tau$-continuous} if $M$ is an a.s.\ constant on every interval $[\tau_{s-}, \tau_s]$, $s \geq 0$, where we let $\tau_{0-} = 0$.

\begin{theorem*}\label{thm:apptimechange}
 Let $A:\mathbb R_+ \times \Omega\to \mathbb R_+$ be a strictly increasing continuous predictable process such that $A_0 = 0$ and $A_{t} \to \infty$ as $t\to \infty$ a.s. Let $\tau = (\tau_s)_{s\geq 0}$ be a random time-change defined as
  $\tau_s := \{t: A_t=s\}$, $s\geq 0$.
Then $(A\circ \tau)(t) = (\tau \circ A)(t) = t$ a.s.\ for each $t\geq 0$. Let $\mathbb G = (\mathcal G_s)_{s \geq 0} = (\mathcal F_{\tau_s})_{s\geq 0}$ be the induced filtration. Then $(A_t)_{t\geq 0}$ is a random time-change with respect to $\mathbb G$ and for any $\mathbb F$-martingale $M:\mathbb R_+ \times \Omega \to\mathbb R$ the following holds
\begin{itemize}
 \item [(i)] $M \circ \tau$ is a continuous $\mathbb G$-martingale if and only if $M$ is continuous, and
 \item[(ii)]$M \circ \tau$ is a purely discontinuous $\mathbb G$-martingale if and only if $M$ is purely discontinuous.
\end{itemize}
\end{theorem*}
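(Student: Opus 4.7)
The plan is to exploit the fact that, because $A$ is strictly increasing, continuous, with $A_0 = 0$ and $A_t \to \infty$ a.s.\ as $t\to\infty$, for almost every $\omega$ the map $t\mapsto A_t(\omega)$ is a homeomorphism of $\mathbb R_+$ onto itself, with $\tau(\omega)$ its pathwise inverse. Hence $A\circ\tau = \tau\circ A = \mathrm{id}$ a.s., and both $A$ and $\tau$ have continuous, strictly increasing paths. To check that $(A_t)_{t\geq 0}$ is a $\mathbb G$-random time-change it then suffices to observe that for each $t,s \geq 0$,
\[
 \{A_t \leq s\} = \{t \leq \tau_s\} \in \mathcal F_{\tau_s} = \mathcal G_s,
\]
since $\tau_s$ is $\mathcal F_{\tau_s}$-measurable; right-continuity and monotonicity in $t$ are inherited from $A$.

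For (i), I invoke the standard time-change theorem for (local) martingales, as in \cite[Chapter~7]{Kal}: since $\tau$ is an $\mathbb F$-random time-change and $M$ is an $\mathbb F$-martingale, $M\circ\tau$ is a $\mathbb G$-local martingale; symmetrically, composition with $A$ sends $\mathbb G$-local martingales to $\mathbb F$-local martingales. Continuity is then a pathwise issue: continuity of $M$ and of $\tau$ forces $M\circ\tau$ continuous, and since $M = (M\circ\tau)\circ A$, continuity of $M\circ\tau$ together with continuity of $A$ gives the converse.

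For (ii), I use the orthogonality characterization of Proposition~\ref{thm:purdiscorthtoanycont1}. Suppose $M$ is $\mathbb F$-purely discontinuous, and let $N$ be any bounded continuous $\mathbb G$-martingale with $N_0 = 0$. By the time-change theorem applied to $A$, $N\circ A$ is an $\mathbb F$-martingale, and it is again bounded, continuous, with $(N\circ A)_0 = N_0 = 0$. By Proposition~\ref{thm:purdiscorthtoanycont1}, $M\cdot(N\circ A)$ is an $\mathbb F$-martingale. Composing with $\tau$,
\[
(M\circ\tau)\cdot N = \bigl(M\cdot(N\circ A)\bigr)\circ\tau
\]
is a $\mathbb G$-martingale, so a second appeal to Proposition~\ref{thm:purdiscorthtoanycont1} shows that $M\circ\tau$ is $\mathbb G$-purely discontinuous. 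The reverse implication is obtained by the identical argument with the roles of $(\mathbb F,\tau)$ and $(\mathbb G,A)$ interchanged, using that continuity of $\tau$ makes $N'\circ\tau$ continuous for any continuous $\mathbb F$-martingale $N'$.

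The principal technical obstacle is integrability when the time-change theorem is invoked: a priori the composed process is only a local martingale, whereas Proposition~\ref{thm:purdiscorthtoanycont1} requires true martingales multiplied by bounded continuous martingales. A routine localization, combined with the uniform boundedness of the $N$'s appearing in the orthogonality characterization, resolves this; meanwhile the continuity and strict monotonicity of $A$ ensure throughout that no jumps are introduced or destroyed by the change of time, which is what couples the pathwise statements in (i) and (ii) to the orthogonality/martingale assertions.
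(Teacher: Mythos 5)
Your argument is correct and follows essentially the same route as the paper's own proof: establish that $A$ and $\tau$ are a.s.\ mutually inverse, note that $(A_t)_{t\geq 0}$ is a $\mathbb G$-time-change so that only one direction of (i) and (ii) needs proving, handle (i) by the time-change (optional sampling/Kazamaki) theorem plus pathwise continuity, and handle (ii) exactly as the paper does via Proposition \ref{thm:purdiscorthtoanycont1} and the identity $(M\circ\tau)N = (M\cdot(N\circ A))\circ\tau$. The only differences are cosmetic (a pathwise-homeomorphism phrasing of the inverse relation and an explicit remark on localization), so no changes are needed.
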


\subsection{Stochastic integration}
Let $X$ be a Banach space, $H$ be a Hilbert space. For each $h\in H$, $x\in X$ we denote the linear operator $g\mapsto \langle g, h\rangle x$, $g\in H$, by $h\otimes x$. The process $\Phi: \mathbb R_+ \times \Omega \to \mathcal L(H,X)$ is called  \textit{elementary progressive}
with respect to the filtration $\mathbb F = (\mathcal F_t)_{t \geq 0}$ if it is of the form
\begin{equation}\label{eq:elprog}
 \Phi(t,\omega) = \sum_{k=1}^K\sum_{m=1}^M \mathbf 1_{(t_{k-1},t_k]\times B_{mk}}(t,\omega)
\sum_{n=1}^N h_n \otimes x_{kmn},\;\;\; t\geq 0, \omega \in \Omega,
\end{equation}
where $0 \leq t_0 < \ldots < t_K <\infty$, for each $k = 1,\ldots, K$ the sets
$B_{1k},\ldots,B_{Mk}$ are in $\mathcal F_{t_{k-1}}$ and the vectors $h_1,\ldots,h_N$ are orthogonal.
Let $M:\mathbb R_+ \times \Omega \to H$ be a martingale. Then we define the {\em stochastic integral} $\Phi \cdot M:\mathbb R_+ \times \Omega \to X$ of $\Phi$ with respect to $M$ as follows:
\begin{equation}\label{eq:defofstochintwrtM}
 (\Phi \cdot M)_t = \sum_{k=1}^K\sum_{m=1}^M \mathbf 1_{B_{mk}}
\sum_{n=1}^N \langle(M(t_k\wedge t)- M(t_{k-1}\wedge t)), h_n\rangle x_{kmn},\;\; t\geq 0.
\end{equation}

We will need the following lemma on stochastic integration (see \cite{Y17FourUMD}).

\begin{lemma}\label{lemma:stochintmoment}
 Let $d$ be a natural number, $H$ be a $d$-dimensional Hilbert space, $p\in (1,\infty)$, $M, N:\mathbb R_+ \times \Omega \to H$ be $L^p$-martingales, $F:H \to H$ be a measurable function such that $\|F(h)\|\leq C \|h\|^{p-1}$ for each $h\in H$ and some $C>0$. Define $N_-:\mathbb R_+ \times \Omega \to H$ by $(N_-)_t = N_{t-}$, $t\geq 0$. Then $F(N_{-})\cdot M$ is a martingale and for each $t\geq 0$
 \begin{equation}\label{eq:stochintmoment}
  \mathbb E |(F(N_-)\cdot M)_t|\lesssim_{p, d} C(\mathbb E \|N_t\|^p)^{\frac {p-1}p} (\mathbb E \|M_t\|^p)^{\frac {1}p}. 
 \end{equation}
\end{lemma}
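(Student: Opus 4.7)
The plan is to reduce to real-valued stochastic integration via an orthonormal basis of $H$, to bound the quadratic variation $[F(N_-)\cdot M]$ pathwise by $\int_0^{\cdot}\|F(N_-)\|^2\,d[M]$, and then to combine the scalar Burkholder--Davis--Gundy inequality with Hölder's inequality, Doob's maximal inequality, and the $H$-valued BDG inequality applied to $M$.

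First I would fix an orthonormal basis $(e_i)_{i=1}^d$ of $H$, write $M=\sum_i M^i e_i$ and $F(N_-)=\sum_i F^i(N_-)e_i$, and recognise $(F(N_-)\cdot M)_t=\sum_{i=1}^d\int_0^t F^i(N_{s-})\,dM^i_s$ as a sum of real-valued stochastic integrals. Measurability of $F$ combined with the adapted left-continuous paths of $N_-$ makes $F(N_-)$ predictable, so the integral exists at least as a local martingale.

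Next I would prove the pathwise bound
\[
[F(N_-)\cdot M]_t\leq \int_0^t \|F(N_{s-})\|^2\,d[M]_s
\]
by applying the Cauchy--Schwarz inequality in $H$ to each partition increment, $|\langle F(N_{t_{k-1}-}),M_{t_k}-M_{t_{k-1}}\rangle|^2\leq\|F(N_{t_{k-1}-})\|^2\|M_{t_k}-M_{t_{k-1}}\|^2$, summing over $k$ and passing to the limit in probability along refining partitions via the definition \eqref{eq:defquadvar}. Together with the growth condition on $F$ this yields $[F(N_-)\cdot M]_t^{1/2}\leq C\sup_{s\leq t}\|N_{s-}\|^{p-1}\,[M]_t^{1/2}$.

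To conclude I would chain the scalar $L^1$-BDG inequality applied to the real-valued martingale $F(N_-)\cdot M$, Hölder's inequality with conjugate exponents $p/(p-1)$ and $p$, Doob's $L^p$-maximal inequality applied to $N$, and the $H$-valued $L^p$-BDG inequality applied to $M$; the composition of these four estimates is exactly \eqref{eq:stochintmoment}. The finiteness of the right-hand side also gives $\mathbb E\sup_{s\leq t}|(F(N_-)\cdot M)_s|<\infty$, promoting the local martingale to a uniformly integrable, hence true, martingale on each bounded interval. The step I expect to be the main obstacle is the pathwise bracket estimate, since the limit-in-probability definition of $[\,\cdot\,]$ forces one to control stepwise approximations of $\|F(N_-)\|^2$ against a random measure $d[M]$ carrying both continuous and jump parts; once that is in hand the remaining arguments are a standard cascade.
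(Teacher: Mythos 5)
The paper itself offers no proof of this lemma; it is quoted from \cite{Y17FourUMD}, where the argument is exactly the cascade you describe (bracket bound for the integral, scalar BDG, H\"older, Doob, vector-valued BDG for $M$). Your chain of estimates is correct and yields \eqref{eq:stochintmoment} with the claimed dependence on $p$ and $d$, and your uniform-integrability argument upgrading the local martingale to a true martingale is fine; note also that $F(N_-)$ is predictable and locally bounded (because $\|F(N_{s-})\|\le C\|N_{s-}\|^{p-1}$ and $N_-$ has locally bounded paths), which is what makes the stochastic integral well defined as a local martingale in the first place.

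The one step that does not work as written is your derivation of the key inequality $[F(N_-)\cdot M]_t\le\int_0^t\|F(N_{s-})\|^2\,d[M]_s$ from the partition definition \eqref{eq:defquadvar}. The increment of the integral over $(t_{k-1},t_k]$ is $\int_{(t_{k-1},t_k]}\langle F(N_{s-}),dM_s\rangle$, not $\langle F(N_{t_{k-1}-}),M_{t_k}-M_{t_{k-1}}\rangle$; identifying the two in the limit is a left-endpoint Riemann approximation of the stochastic integral, which is legitimate for left-continuous (or regulated) integrands but not here, since $F$ is only measurable and $s\mapsto F(N_{s-})$ can be everywhere discontinuous — neither the left-endpoint sums for the integral nor the sums $\sum_k\|F(N_{t_{k-1}-})\|^2\|M_{t_k}-M_{t_{k-1}}\|^2$ need converge to the objects you want. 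The inequality itself is true, and the clean route avoids partitions: writing $F(N_-)\cdot M=\sum_i F^i(N_-)\cdot M^i$ in your orthonormal basis, the standard bracket formula for predictable locally bounded integrands gives $[F(N_-)\cdot M]_t=\sum_{i,j}\int_0^t F^i(N_{s-})F^j(N_{s-})\,d[M^i,M^j]_s$, and since the matrix measure $\bigl(d[M^i,M^j]\bigr)_{i,j}$ is nonnegative definite (Kunita--Watanabe), the pointwise estimate $a^{T}Qa\le\|a\|^2\,\mathrm{tr}\,Q$ yields the bound with $d[M]_s=\sum_i d[M^i]_s$. With that substitution your proof is complete and coincides in substance with the argument in the cited source.
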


\subsection{Multidimensional Wiener process}
Let $d$ be a natural number. $W:\mathbb R_+ \times \Omega \to \mathbb R^d$ is called a {\em standard $d$-dimensional Wiener process} if $\langle W, h\rangle$ is a standard Wiener process for each $h\in \mathbb R^d$ such that $\|h\|=1$.
The following lemma is a multidimensional variation of \cite[(3.2.19)]{KS}.
\begin{lemma}\label{lemma:covstochintwrtcylbrmo}
 Let $X = \mathbb R$, $d\geq 1$, $W$ be a standard $d$-dimensional Wiener process, $\Phi, \Psi:\mathbb R_+ \times \Omega \to \mathcal L(\mathbb R^d, \mathbb R)$ be elementary progressive. Then for all $t\geq 0$ a.s.
 \[
  [\Phi \cdot W, \Psi \cdot W]_t = \int_0^t \langle \Phi^*(s), \Psi^*(s) \rangle\ud s.
 \]
\end{lemma}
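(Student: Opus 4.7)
My plan is to reduce the assertion to the classical one-dimensional covariation formula \cite[(3.2.19)]{KS}, in two stages. Since $\Phi \mapsto \Phi \cdot W$ is linear on elementary progressive integrands (after passing to a common refinement of the time grids and a common set of orthogonal $h_n$'s, which is always possible in finite dimensions), and since $[\cdot,\cdot]$ is bilinear, the polarization identity
\[
[\Phi \cdot W, \Psi \cdot W] = \tfrac14\bigl([(\Phi+\Psi)\cdot W] - [(\Phi-\Psi)\cdot W]\bigr)
\]
reduces the problem to proving the diagonal case $[\Phi \cdot W]_t = \int_0^t \|\Phi^*(s)\|^2\,ds$. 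The same polarization applied to the right-hand side, combined with $\frac14(\|\Phi^*+\Psi^*\|^2 - \|\Phi^*-\Psi^*\|^2) = \langle \Phi^*, \Psi^*\rangle$, then recovers the full statement.

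Next I unpack the representation \eqref{eq:elprog}. Setting $\phi_k^n := \sum_{m=1}^M \mathbf 1_{B_{mk}} x_{kmn}$, which is bounded and $\mathcal F_{t_{k-1}}$-measurable, one has $\Phi(s,\omega) = \sum_n \phi_k^n(\omega)(h_n \otimes 1)$ for $s \in (t_{k-1}, t_k]$, hence $\Phi^*(s) = \sum_n \phi_k^n h_n \in \mathbb R^d$ (using $(h\otimes 1)^*(1) = h$). Orthogonality of the $\{h_n\}$ yields $\|\Phi^*(s)\|^2 = \sum_n (\phi_k^n)^2 \|h_n\|^2$ on each strip. On the integral side, formula \eqref{eq:defofstochintwrtM} allows me to rewrite $\Phi \cdot W = \sum_n \phi^n \cdot \langle W, h_n\rangle$, where each $\phi^n(s) = \sum_k \phi_k^n \mathbf 1_{(t_{k-1}, t_k]}(s)$ is scalar elementary and $\langle W, h_n\rangle$ is a one-dimensional continuous Gaussian martingale.

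The crucial auxiliary identity is $[\langle W, h\rangle, \langle W, h'\rangle]_t = \langle h, h'\rangle\, t$ for all $h, h' \in \mathbb R^d$. This follows by polarization from $[\langle W, h\rangle]_t = \|h\|^2 t$, which is itself the defining property of a standard $d$-dimensional Wiener process applied to $h/\|h\|$. In particular, the orthogonality of the $\{h_n\}$ forces $[\langle W, h_i\rangle, \langle W, h_j\rangle]_t = 0$ for $i \ne j$.

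Putting everything together, an application of \cite[(3.2.19)]{KS} to each pair $(\phi^n \cdot \langle W, h_n\rangle, \phi^{n'} \cdot \langle W, h_{n'}\rangle)$, combined with the cross-covariation identity above and the bilinearity of covariation, gives
\[
[\Phi \cdot W]_t = \sum_{n,n'} \int_0^t \phi^n(s) \phi^{n'}(s)\, d[\langle W, h_n\rangle, \langle W, h_{n'}\rangle]_s = \sum_n \|h_n\|^2 \int_0^t (\phi^n(s))^2 \,ds,
\]
which coincides with $\int_0^t \|\Phi^*(s)\|^2 \,ds$. I do not anticipate any real obstacle; the argument is essentially a careful bookkeeping exercise, with the slightly delicate points being the twofold use of polarization (once to reduce to the diagonal case and once to derive the cross-covariation identity from the definition of the $d$-dimensional Wiener process) and the verification that the natural identification $\Phi^*(s)(1) = \sum_n \phi_k^n h_n$ is indeed the adjoint in $\mathcal L(\mathbb R^d, \mathbb R)$.
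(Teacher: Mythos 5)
Your argument is correct and is essentially the proof the paper has in mind: the paper offers no proof of this lemma beyond citing \cite[(3.2.19)]{KS}, and your reduction---rewriting $\Phi\cdot W=\sum_n\phi^n\cdot\langle W,h_n\rangle$, computing $[\langle W,h_n\rangle,\langle W,h_{n'}\rangle]_t=\langle h_n,h_{n'}\rangle t$ by polarization, and applying the Karatzas--Shreve covariation formula componentwise---is exactly the routine ``multidimensional variation'' being invoked. The only cosmetic remarks are that your initial polarization to the diagonal case is redundant (you expand cross terms bilinearly anyway), and that for the terms with $n\neq n'$ you need the standard two-martingale form of the cited identity ($[\phi\cdot M,\psi\cdot N]_t=\int_0^t\phi\psi\,\mathrm d[M,N]$), which is immediate for elementary integrands.
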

The reader can find more on stochastic integration with respect to a Wiener process in the Hilbert space case in  \cite{DPZ}, in the case of Banach spaces with a martingale type~2 in \cite{Brz1}, and in the UMD case in \cite{NVW}. Notice that the last mentioned work provides sharp $L^p$-estimates for stochastic integrals for the broadest till now known class of spaces. 

\subsection{Brownian representation}  The following theorem can be found in \cite[Theorem 3.4.2]{KS} (see also \cite{SV,Yar16Br}).

\begin{theorem}\label{thm:Brrepres}
 Let $d\geq 1$, $M:\mathbb R_+ \times \Omega \to \mathbb R^d$ be a continuous martingale such that $[M]$ is a.s.\ absolutely continuous with respect to the Lebesgue measure on $\mathbb R_+$. Then there exist an enlarged probability space $(\widetilde{\Omega}, \widetilde {\mathcal F}, \widetilde{\mathbb P})$ with an enlarged filtration $\widetilde{\mathbb F} = (\widetilde F_t)_{t\geq 0}$, a $d$-dimensional standard Wiener process $W:\mathbb R_+ \times \widetilde{\Omega}\to \mathbb R^d$ which is defined on the filtration $\widetilde {\mathbb F}$, and an $\widetilde {\mathbb F}$-progressively measurable $\Phi :\mathbb R_+\times \widetilde{\Omega} \to \mathcal L(\mathbb R^d)$ such that $M = \Phi \cdot W$.
\end{theorem}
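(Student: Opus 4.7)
The plan is to apply \emph{L\'evy's characterization} of Brownian motion, after extracting a square root of the matrix density of $[M]$ and inverting it on an enlarged space. Without loss of generality assume $M_0 = 0$; the general case follows by replacing $M$ with $M - M_0$. Since the scalar quadratic variation $[M]_t = \sum_{i=1}^{d}[M^i]_t$ is a.s.\ absolutely continuous and each $[M^i]$ is nondecreasing, every $[M^i]$ is absolutely continuous, and by polarization so is every $[M^i, M^j]$. Hence there exists a progressively measurable $a:\R_+ \times \O \to \R^{d\times d}$, pointwise symmetric positive semidefinite (positivity follows from the Kunita--Watanabe inequality applied to the signed measure $d[M^i,M^j]$ against $d[M]$), such that
\[
 [M^i, M^j]_t = \int_0^t a^{ij}(s)\ud s \qquad \text{a.s.}, \ i,j = 1,\dots, d.
\]

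Next I introduce the square root. Let $\sigma(s) := a(s)^{1/2}$ be the unique symmetric positive semidefinite square root. Since $A \mapsto A^{1/2}$ is continuous on the cone of symmetric positive semidefinite matrices, $\sigma$ is progressively measurable, as is its Moore--Penrose pseudoinverse $\sigma^+$; write $P(s) := \sigma(s)\sigma^+(s) = \sigma^+(s)\sigma(s)$ for the orthogonal projection onto $\mathrm{Range}\,\sigma(s)$. Now enlarge the space: let $(\O', \F', \P')$ carry a standard $d$-dimensional Brownian motion $B$ independent of $\F_\infty$, form $\wt \O := \O \times \O'$ equipped with the usual-augmentation $\wt{\mathbb F}$ of the product filtration, and define
\[
 W_t := (\sigma^+ \cdot M)_t + \bigl((I - P) \cdot B\bigr)_t, \qquad t \geq 0.
\]
Then $W$ is a continuous $\wt{\mathbb F}$-martingale starting at zero.

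To check that $W$ is a standard $d$-dimensional Wiener process I apply L\'evy's theorem. Using the independence of $M$ and $B$ to kill all cross terms, and the identities $\sigma^+ a (\sigma^+)\s = \sigma^+\sigma\sigma(\sigma^+)\s = P\cdot P = P$ together with $(I-P)^2 = I-P$, one computes
\[
 [W^i, W^j]_t = \int_0^t P^{ij}(s)\ud s + \int_0^t (I-P)^{ij}(s)\ud s = \delta_{ij}\, t.
\]
Finally set $\Phi := \sigma$. Since $\sigma P = \sigma$ we have $\sigma(I-P) = 0$, so the second summand of $W$ contributes nothing and
\[
 \Phi \cdot W = (\sigma\sigma^+)\cdot M = P \cdot M.
\]
The difference $M - P\cdot M$ is a continuous martingale starting at zero whose $\R^d$-valued covariation matrix is $(I-P)\,a\,(I-P)\s$; since $(I-P)\sigma = 0$, this vanishes, so $M - P\cdot M \equiv 0$ and $M = \Phi \cdot W$.

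The only genuine obstacle is the singular behaviour of $\sigma(s)$: on the random set where $a(s)$ has nontrivial kernel, the integral $\sigma^+ \cdot M$ loses the martingale content living in $\ker \sigma(s)$, and one cannot recover $W$ on the original probability space. This is precisely why an auxiliary independent Brownian motion $B$ is adjoined on an enlarged space, with the $(I-P)\cdot B$ term filling in exactly the missing components. The measurability of $\sigma$ and $\sigma^+$ on the (Borel) stratification of symmetric matrices by rank is routine but must be handled to make the construction progressively measurable.
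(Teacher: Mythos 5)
Your proof is correct and follows essentially the same route as the paper, which does not prove this statement itself but cites \cite[Theorem 3.4.2]{KS}: the symmetric square root of the density of $[M^i,M^j]$, its Moore--Penrose pseudoinverse, an auxiliary independent Brownian motion on an enlarged space to fill in $\ker\sigma$, L\'evy's characterization for $W$, and the zero-quadratic-variation argument for $M-P\cdot M$ are exactly the standard argument there. The only cosmetic point is that the conclusion $M=\Phi\cdot W$ forces $M_0=0$, which is implicit in the statement and in every application in the paper, so your ``replace $M$ by $M-M_0$'' remark is not needed.
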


\subsection{Lebesgue measure}
Let $X$ be a finite dimensional Banach space. Then according to Theorem 2.20 and Proposition 2.21 in \cite{FolHarm} there exists a unique  translation-invariant measure $\lambda_X$ on $X$ such that $\lambda_X(\mathbb B_X) = 1$ for the unit ball $\mathbb B_X$ of $X$. We will call $\lambda_X$ the {\em Lebesgue measure}.

\section{UMD Banach spaces and martingale decompositions}

Let $X$ be a Banach space, $1<p<\infty$. In this section we will show that the Meyer-Yoeurp and Yoeurp decompositions for $X$-valued $L^p$-martingales take place if and only if $X$~has the UMD property.

\subsection{Meyer-Yoeurp decomposition in UMD case}\label{subsec:MeyYoudec}
This subsection is devoted to the ge\-ne\-ra\-li\-za\-tion of Meyer-Yoeurp decomposition (see Remark \ref{rem:YoeMeyR}) to the UMD Banach space case:

\begin{theorem}[Meyer-Yoeurp decomposition]\label{thm:Meyer-Yoeurp}
 Let $X$ be a UMD Banach space, $p\in (1,\infty)$, $M:\mathbb R_+ \times \Omega \to X$ be an $L^p$-martingale. Then there exist unique martingales $M^d, M^c:\mathbb R_+ \times \Omega \to X$ such that $M^d$ is purely discontinuous, $M^c$ is continuous, $M^c_0 = 0$ and $M=M^d + M^c$. Moreover, then for all $t\geq 0$
 \begin{equation}\label{eq:Meyer-Yoeurp}
 (\mathbb E\|M^d_t\|^p)^{\frac 1p} \leq \beta_{p,X}(\mathbb E \|M_t\|^p)^{\frac 1p},\;\;\;\;\;
 (\mathbb E\|M^c_t\|^p)^{\frac 1p} \leq \beta_{p,X}(\mathbb E \|M_t\|^p)^{\frac 1p}.
 \end{equation}
\end{theorem}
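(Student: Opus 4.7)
For uniqueness, suppose $M = M^d_1 + M^c_1 = M^d_2 + M^c_2$ are two such decompositions. Then $M^d_1 - M^d_2 = M^c_2 - M^c_1$ is a martingale that is simultaneously purely discontinuous (the property in Definition \ref{def:purelydiscXvalued} is preserved under differences) and continuous, with initial value $M^c_{2,0} - M^c_{1,0} = 0$. Lemma \ref{lemma:contpuredisczero} forces it to vanish identically, yielding $M^d_1 = M^d_2$ and $M^c_1 = M^c_2$.

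For existence and the estimates, I would first reduce to the finite-dimensional case. Pick simple random variables $\xi_n \in L^p(\Omega; X)$ taking values in finite-dimensional subspaces $X_n \subset X$ with $\xi_n \to M_\infty$ in $L^p$, and let $M^{(n)}_t := \mathbb{E}(\xi_n \mid \mathcal{F}_t)$; this is an $X_n$-valued $L^p$-martingale with $M^{(n)} \to M$ in $\mathcal{M}_X^p$. By Remark \ref{rem:YoeMey} each $M^{(n)}$ admits a decomposition $M^{(n)} = M^{(n),d} + M^{(n),c}$ with components in $X_n$. Granting \eqref{eq:Meyer-Yoeurp} for $X_n$ (with $\beta_{p, X_n} \leq \beta_{p, X}$ since $X_n$ is a subspace of the UMD space $X$), the sequences $(M^{(n),d})_n$ and $(M^{(n),c})_n$ are Cauchy in the Banach spaces $\mathcal{M}_X^{p,d}$ and $\mathcal{M}_X^{p,c}$ (Propositions \ref{thm:M^p,discomplete} and \ref{prop:M^p,ciscomplete}), and their limits deliver the required decomposition of $M$ together with the estimates \eqref{eq:Meyer-Yoeurp}.

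The heart of the matter is thus \eqref{eq:Meyer-Yoeurp} when $X$ is finite-dimensional. Since the scalar Meyer-Yoeurp orthogonality gives $[\langle M, x^*\rangle] = [\langle M^d, x^*\rangle] + [\langle M^c, x^*\rangle]$ for every $x^* \in X^*$, both $M^d$ and $M^c$ are weakly differentially subordinate to $M$; more sharply, the pair $(M, M^d)$ moves along the diagonal at every jump (since $\Delta M^d = \Delta M$) and along the first axis in the continuous part (since $M^d$ has no continuous martingale component), and symmetrically for $(M, M^c)$. This is precisely the continuous-time analogue of the $\{0,1\}$-transform setting in Proposition \ref{prop:UMD01sequence}. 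Following Wang \cite{Wang}, my plan is to invoke the Burkholder function $U_{p,X}: X \times X \to \mathbb{R}$ associated to that $\{0,1\}$-variant of UMD; it satisfies the majorization
\[
U_{p,X}(x,y) \geq \|y\|^p - \beta_{p, X}^p \|x\|^p, \qquad U_{p,X}(x,x) \leq 0, \qquad U_{p,X}(x,0) \leq 0,
\]
and by \cite{Y17FourUMD} it is locally Lipschitz and twice differentiable on an open dense subset of $X \times X$. Mollifying $U_{p,X}$ against the Lebesgue measure $\lambda_{X\times X}$ produces smooth approximants $U^\varepsilon_{p,X}$ inheriting the majorization up to $o(1)$; applying It\^o's formula to $U^\varepsilon_{p,X}(M_t, M^d_t)$, the continuous-covariation drift vanishes because $M^d$ has no continuous martingale component, while the jump compensators are non-positive by the zigzag inequalities for $U_{p,X}$ along the diagonal direction. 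Thus $U^\varepsilon_{p,X}(M, M^d)$ is a supermartingale started from $U^\varepsilon_{p,X}(M_0, M_0) \leq o(1)$; taking expectations, letting $\varepsilon \to 0$, and applying the majorization gives the first bound in \eqref{eq:Meyer-Yoeurp}. Exchanging the roles of the diagonal and axis directions yields the second bound for $M^c$.

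The principal obstacle is the It\^o-formula step itself: $U_{p,X}$ is not classically $C^2$, so the mollification must preserve enough of its curvature for both the continuous-covariation drift and the jump-compensator contribution to remain non-positive in the limit $\varepsilon \to 0$. This is exactly where the recent differentiability results of \cite{Y17FourUMD} play a decisive r\^ole, and where the specific $\{0,1\}$-zigzag structure of the pair $(M, M^d)$ (resp.\ $(M, M^c)$) is essential in order to obtain the sharp constant $\beta_{p, X}$ rather than the larger constant from the classical $\{-1,1\}$-transform version of UMD. The remaining pieces---uniqueness, density of finite-dimensional-valued martingales, and closedness of $\mathcal{M}_X^{p,d}$ and $\mathcal{M}_X^{p,c}$---are comparatively routine.
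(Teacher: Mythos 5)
Your overall skeleton (uniqueness via Lemma \ref{lemma:contpuredisczero}, reduction to finite dimensions by simple-function approximation plus completeness of $\mathcal M_X^{p,d}$ and $\mathcal M_X^{p,c}$, and a Burkholder-function/It\^o argument in finite dimensions) coincides with the paper's. However, the sketch of the crucial finite-dimensional step contains a genuine error: the continuous-covariation term in It\^o's formula for $U(M_t,M^d_t)$ does \emph{not} vanish. Since $M=M^c+M^d$ has a nontrivial continuous part, this term is $\frac12\int_0^t\sum_{i,j}U_{x_i,x_j}(M_{s-},M^d_{s-})\,\mathrm{d}[\langle M,x_i^*\rangle,\langle M,x_j^*\rangle]^c_s$, and proving that it is (merely) nonpositive is exactly the nontrivial part of the argument: one must exploit concavity of $U(\cdot,y)$, i.e.\ the $\varepsilon=0$ zigzag direction, against the nonnegative-definite continuous covariation matrix. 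The paper does this via a time-change making $[M^c]$ absolutely continuous, the Brownian representation $M^c=\Phi\cdot W$ of Theorem \ref{thm:Brrepres}, and a pointwise diagonalization of $x^*\mapsto\|\Phi^*x^*\|^2$ combined with Lemma \ref{lemma:traceindep}; with your mollified $U^{\varepsilon}$ a trace inequality for the covariation density would serve the same purpose, but some version of this step is indispensable --- ``vanishes because $M^d$ has no continuous martingale component'' is false as stated (only the contributions involving the second variable drop out). Symmetrically, for the bound on $M^c$ the continuous term acts along the diagonal direction and again is only $\le 0$, not $0$.

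A second, smaller problem is the appeal to a ``$\{0,1\}$-Burkholder function'': the differentiability results of \cite{Y17FourUMD} that you invoke are proved for the standard zigzag-concave Burkholder function of Theorem \ref{thm:Burkholder} (Remarks \ref{rem:propertiesofU} and \ref{rem:propertiesofV}), and that function already produces the constant $\beta_{p,X}$ of \eqref{eq:Meyer-Yoeurp}, because zigzag-concavity covers every direction $(z,\varepsilon z)$ with $|\varepsilon|\le 1$, in particular $\varepsilon=0$ and $\varepsilon=1$, which are the only directions the pairs $(M,M^d)$ and $(M,M^c)$ use. Your claim that the $\{0,1\}$-structure is needed to avoid ``the larger constant from the classical $\{-1,1\}$-transform version'' is backwards: $\beta_{p,X}$ \emph{is} the classical constant, while the $\{0,1\}$-variant would give the possibly smaller constant of Proposition \ref{prop:UMD01sequence} (cf.\ the paper's closing remark), and a $\{0,1\}$-Burkholder function with the regularity you need is not available in the sources used here. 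Your mollification idea is a reasonable substitute for the paper's device of perturbing the martingales to have absolutely continuous laws (Proposition \ref{prop:approxXbyYvalued} and Remark \ref{rem:sumofapprox}), but to call $U^{\varepsilon}(M,M^d)$ a supermartingale you must also verify that the stochastic-integral terms are true martingales with zero expectation, which requires a gradient growth bound as in Remark \ref{rem:propertiesofV}(E) together with Lemma \ref{lemma:stochintmoment}.
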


The proof of the theorem consists of several steps. First we introduce the main tool of our proof -- the Burkholder function.

\begin{definition}
 Let $E$ be a linear space with a scalar field $\mathbb R$.
 \begin{itemize}
  \item[(i)] A function $f:E\times E \to \mathbb R$ is called {\em biconcave} if for each $x, y \in E$ one has that the mappings $e\mapsto f(x, e)$ and $e\mapsto f(e,y)$ are concave.
  \item[(ii)] A function $f:E\times E \to \mathbb R$ is called {\em zigzag-concave} if for each $x, y\in E$ and $\eps \in \mathbb R$ such that $|\eps| \leq 1$, the function $z\mapsto f(x+z, y+\eps z)$ is concave.
 \end{itemize}
\end{definition}

The following theorem is a small variation of \cite{Burk86} and \cite[Theorem 4.5.6]{HNVW1}, and has been proven in \cite{Y17FourUMD}.

\begin{theorem}[Burkholder]\label{thm:Burkholder}
 For a Banach space $X$ the following are equivalent
 \begin{enumerate}
  \item $X$ is a UMD Banach space;
  \item for each $p\in (1,\infty)$ there exists a constant $\beta$ and a zigzag-concave function $U:X\times X \to \mathbb R$ such that
  \begin{equation}\label{eq:ineqonU}
     U(x,y)\geq \|y\|^p - \beta^p\|x\|^p,\;\;\;x,y\in X.
  \end{equation}
 \end{enumerate}
 The smallest admissible $\beta$ for which such $U$ exists is $\beta_{p, X}$.
\end{theorem}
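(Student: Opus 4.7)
The plan is to establish the two implications separately, using as the main tool the $\pm 1$-transform that defines UMD, and to extract the sharp constant by comparing them. For $(2)\Rightarrow(1)$, given a zigzag-concave $U$ satisfying \eqref{eq:ineqonU}, take a martingale difference sequence $(d_j)_{j=1}^n$ in $L^p(\Omega;X)$ and signs $(\eps_j)_{j=1}^n$, form $f_k=\sum_{j\le k}d_j$ and $g_k=\sum_{j\le k}\eps_j d_j$, and observe that concavity of $z\mapsto U(f_{j-1}+z,g_{j-1}+\eps_j z)$ combined with $\mathbb E(d_j\mid \mathcal F_{j-1})=0$ yields, via Jensen, $\mathbb E[U(f_j,g_j)\mid \mathcal F_{j-1}]\le U(f_{j-1},g_{j-1})$. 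Iterating gives $\mathbb E U(f_n,g_n)\le U(0,0)$, and \eqref{eq:ineqonU} then produces $\mathbb E\|g_n\|^p\le \beta^p\mathbb E\|f_n\|^p+U(0,0)$. Replacing $(d_j)$ by $(t d_j)$ and sending $t\to\infty$ absorbs the additive constant, giving the UMD$_p$ inequality with constant $\beta$, hence $\beta_{p,X}\le \beta$.

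For $(1)\Rightarrow(2)$, I would define $U:X\times X\to\mathbb R$ by the supremum
\[
 U(x,y):=\sup\Bigl\{\mathbb E\|y+g_\infty\|^p-\beta_{p,X}^p\,\mathbb E\|x+f_\infty\|^p\Bigr\},
\]
where the supremum ranges over all finite-length $X$-valued martingales $(f_k)$ starting at $0$ together with their $\{-1,1\}$-transforms $(g_k)$, i.e.\ $f_\infty=\sum d_j$ and $g_\infty=\sum\eps_j d_j$. The trivial choice $d_j\equiv 0$ immediately gives \eqref{eq:ineqonU}. For finiteness (and continuity) of $U$, the triangle inequality combined with the UMD$_p$ estimate $\mathbb E\|g_\infty\|^p\le\beta_{p,X}^p\mathbb E\|f_\infty\|^p$ controls the supremum by a polynomial in $\|x\|$ and $\|y\|$.

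The main technical step is proving zigzag-concavity: concavity of $z\mapsto U(x+z,y+\eps z)$ for each $x,y\in X$ and $\eps\in[-1,1]$. I would verify midpoint concavity by a splicing construction: for $w\in X$, pick transform pairs $(f^{\pm},g^{\pm})$ starting at $0$ that nearly attain the suprema defining $U(x+z\pm w,y+\eps z\pm\eps w)$, and on an enlarged probability space build a new transform pair whose first difference is $\pm w$ with probability $1/2$ each and whose subsequent evolution follows $(f^+,g^+)$ or $(f^-,g^-)$ according to the sign of that first step. To accommodate the multiplier $\eps\in[-1,1]$ (rather than $\eps\in\{-1,1\}$), I would first write $\eps$ as a convex combination of $\pm 1$ and introduce an independent Rademacher sign on that first step, reducing to the endpoint case; conditioning on that sign then delivers the midpoint inequality for $U(x+z,y+\eps z)$. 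Continuity upgrades midpoint concavity to concavity. Sharpness follows by combining the two directions: the constructed $U$ uses $\beta=\beta_{p,X}$, while $(2)\Rightarrow(1)$ forces any admissible $\beta$ to satisfy $\beta_{p,X}\le\beta$.

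The main obstacle I expect is precisely the splicing step: one must guarantee that the composite sequence obtained by prepending a $\pm w$ increment to either $(f^+,g^+)$ or $(f^-,g^-)$ is a genuine martingale difference sequence on the enlarged filtration and, crucially, is a $\{-1,1\}$-transform of itself with a single coherent multiplier sequence, so that it is admissible in the defining supremum for $U$. The reduction from $|\eps|<1$ to $\eps=\pm 1$ via an independent Rademacher auxiliary variable, while natural, is the point where the zigzag parameter genuinely enters, and it must be organized carefully so that the enlargement does not spoil the martingale property of the later increments $(d_j)_{j\ge 2}$.
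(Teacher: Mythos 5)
The paper does not actually prove this theorem: it is quoted from Burkholder's work, from \cite[Theorem 4.5.6]{HNVW1}, and from \cite{Y17FourUMD}, so your proposal has to be measured against the standard argument in those references. Your route is indeed that standard one. The implication $(2)\Rightarrow(1)$ — the supermartingale property of $U(f_j,g_j)$ via conditional Jensen applied to the concave map $z\mapsto U(f_{j-1}+z,g_{j-1}+\eps_jz)$, followed by the scaling $d_j\mapsto td_j$, $t\to\infty$, to absorb $U(0,0)$ — is correct and gives $\beta_{p,X}\le\beta$, modulo the routine remark that one tests UMD on simple (Paley--Walsh) martingales so that the finite concave $U$ is continuous, hence measurable, on the relevant finite-dimensional subspace.

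There are, however, two concrete gaps in your construction of $U$. First, \emph{finiteness}. The bound you propose fails: writing $t=(\mathbb E\|f_\infty\|^p)^{1/p}$, the triangle inequality together with $\mathbb E\|g_\infty\|^p\le\beta_{p,X}^p\,\mathbb E\|f_\infty\|^p$ only yields
$\mathbb E\|y+g_\infty\|^p-\beta_{p,X}^p\,\mathbb E\|x+f_\infty\|^p\le\bigl(\|y\|+\beta_{p,X}\|x\|+\beta_{p,X}t\bigr)^p-\beta_{p,X}^p\max(t-\|x\|,0)^p$,
whose supremum over $t$ is $+\infty$ for every $p>1$ (the right-hand side grows like $t^{p-1}$). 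So no bound polynomial in $\|x\|,\|y\|$ comes out this way. The standard remedy is structural: first show $U(x,\pm x)\le 0$ (here the full UMD inequality applies, with the initial value $x$ incorporated as a zeroth difference after prepending an independent Rademacher randomization), then prove zigzag-concavity of the a priori $(-\infty,+\infty]$-valued $U$, and finally propagate finiteness from the diagonal along zigzag lines, using that a concave function on $\mathbb R$ which is everywhere $>-\infty$ and finite at one point is finite everywhere. Second, the passage from $\eps\in\{-1,1\}$ to $|\eps|\le 1$ in the splicing step does not work as described: a first increment $df_1=\sigma w$ with a (necessarily predictable, hence here deterministic) transform sign $\eps_1$ can only produce joint increments $(\sigma w,\pm\sigma w)$, never $(\sigma w,\eps\sigma w)$ with $|\eps|<1$; and the two-step randomization suggested by writing $\eps$ as a convex combination of $\pm1$ lands the pair at four points, only two of which occur in the desired midpoint inequality for the direction $(w,\eps w)$. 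The clean fix is to define the supremum over transforms with predictable coefficients in $[-1,1]$ from the outset — this changes neither the constant (by extremality of $\{-1,1\}$ in $[-1,1]^n$ for the convex function $(\eps_j)\mapsto\|\sum_j\eps_jd_j\|_{L^p(\Omega;X)}$) nor the lower bound \eqref{eq:ineqonU}, and it makes your splicing argument go through verbatim for every $|\eps|\le1$. With these two repairs your outline coincides with the proof in \cite[Theorem 4.5.6]{HNVW1}.
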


\begin{remark}\label{rem:propertiesofU}
 Fix a UMD space $X$ and $p\in (1,\infty)$. A special zigzag-concave function $U$ from Theorem \ref{thm:Burkholder} have been obtained in \cite[Theorem 4.5.6]{HNVW1}. We will call this function {\em the Burkholder function}. For the convenience of the reader we leave out the construction of the Burkholder function. The following properties of the Burkholder function $U$ were demonstrated in \cite[Section 3]{Y17FourUMD}:
 \begin{itemize}
  \item [\rm{(A)}] $U(\alpha x, \alpha y) = |\alpha|^p U(x, y)$ for all $x,y\in X$, $\mathbb \alpha \in \mathbb R$.
  \item [\rm{(B)}] $U(x,\alpha x)\leq 0$ for all $x\in X$, $\alpha \in [-1,1]$.
  \item [\rm{(C)}] $U$ is continuous.
 \end{itemize}
\end{remark}

\begin{remark}\label{rem:propertiesofV}
 Fix a UMD space $X$ and $p\in (1,\infty)$. Let the Burkholder function $U$ be as in Remark \ref{rem:propertiesofU}. Then there exists a biconcave function $V:X\times X \to \mathbb R$ such that 
 \begin{equation}\label{eq:VthroughU}
 V(x,y) = U\Bigl(\frac{x-y}{2},\frac{x+y}2\Bigr),\;\;\; x,y\in X.
\end{equation}
In \cite[Section 3]{Y17FourUMD} the following properties of $V$ have been explored:
 \begin{itemize}
  \item [\rm{(A)}] For each $x,y\in X$ and $a,b \in \mathbb R$ such that $|a+b|\leq |a-b|$ one has that the function
$$
z\mapsto V(x+az,y+bz) = U\Bigl(\frac{x-y}{2} + \frac{(a-b)z}2,\frac{x+y}2 + \frac{(a+b)z}2\Bigr)
$$
is concave.
  \item [\rm{(B)}] $V$ is continuous.
  \item [\rm{(C)}] Let $X$ be finite dimensional. Then $x\mapsto V(x, y)$ and $y\mapsto V(x,y)$ are a.s.\ Fr\'echet-differentiable with respect to the Lebesgue measure $\lambda_X$, and for a.a.\ $(x,y)\in X\times X$ for each $u,v\in X$ there exists the directional derivative $\frac{\partial V(x+tu,y+tv)}{\partial t}$. Moreover, 
\begin{equation}\label{eq:dirderivative}
 \frac{\partial V(x+tu,y+tv)}{\partial t} = \langle \partial_x V(x,y), u\rangle+\langle \partial_y V(x,y), v\rangle,
\end{equation}
where $\partial_x V$ and $\partial_y V$ are the corresponding Fr\'echet derivatives with respect to the first and the second variable. 
\item [\rm{(D)}] Let $X$ be finite dimensional. Then for a.e.\ $(x,y)\in X\times X$, for all $z\in X$ and real-valued $a$ and $b$ such that $|a+b|\leq |a-b|$
\begin{equation}\label{eq:dirderivativeV}
\begin{split}
  V(x+az,y+bz)&\leq V(x,y) + \frac{\partial V(x+atz,y+btz)}{\partial t}\\
 &= V(x,y)+a\langle \partial_x V(x,y), z\rangle+b\langle \partial_y V(x,y), z\rangle.
 \end{split}
\end{equation}
\item [\rm{(E)}] Let $X$ be finite dimensional. Then there exists $C>0$ which depends only on $V$ such that for a.e.\ pair $x, y\in X$, $\|\partial_xV(x, y)\|, \|\partial_yV(x, y)\| \leq C(\|x\|^{p-1} + \|y\|^{p-1})$.
 \end{itemize}
\end{remark}

\begin{definition}\label{def:corrbasis}
 Let $d$ be a natural number, $E$ be a $d$-dimensional linear space, $(e_n)_{n=1}^d$ be a basis of~$E$. Then $(e_n^*)_{n=1}^d\subset E^*$ is called the {\em corresponding dual basis} of $(e_n)_{n=1}^d$ if $\langle e_n, e_m^*\rangle = \delta_{nm}$ for each $m,n=1,\ldots, d$.
\end{definition}

 Note that the corresponding dual basis is uniquely determined. Moreover, if $(e_n^*)_{n=1}^d$ is the corresponding dual basis of $(e_n)_{n=1}^d$, then, the other way around, $(e_n)_{n=1}^d$ is the corresponding dual basis of $(e_n^*)_{n=1}^d$ (here we identify $E^{**}$ with $E$ in the natural way).
 
 \begin{lemma*}\label{lemma:traceindep}
 Let $d$ be a natural number, $E$ be a $d$-dimensional linear space. Let $V: E\times E \to \mathbb R$ and $W:E^* \times E^* \to \mathbb R$ be two bilinear functions. Then the expression
 \begin{equation}\label{eq:traceindep}
  \sum_{n, m=1}^d V(e_n, e_m) W(e_n^*, e_m^*)
 \end{equation}
does not depend on the choice of basis $(e_n)_{n=1}^d$ of $E$ (here $(e_n^*)_{n=1}^d$ is the corresponding dual basis of $(e_n)_{n=1}^d$).
\end{lemma*}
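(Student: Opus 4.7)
The plan is to recognize the sum \eqref{eq:traceindep} as the value of a canonical pairing between tensor powers, and hence intrinsic. A bilinear form $V : E \times E \to \mathbb{R}$ corresponds naturally to an element $\widetilde V \in E^* \otimes E^*$, while a bilinear $W : E^* \times E^* \to \mathbb{R}$ corresponds to an element $\widetilde W \in E^{**} \otimes E^{**} \cong E \otimes E$. There is a canonical perfect pairing $(E \otimes E) \times (E^* \otimes E^*) \to \mathbb{R}$ determined on simple tensors by $\langle a \otimes b, f \otimes g\rangle = \langle a, f\rangle\langle b, g\rangle$. For any basis $(e_n)_{n=1}^d$ with dual basis $(e_n^*)_{n=1}^d$, expanding $\widetilde V = \sum_{n,m} V(e_n, e_m)\, e_n^* \otimes e_m^*$ and $\widetilde W = \sum_{n,m} W(e_n^*, e_m^*)\, e_n \otimes e_m$ and pairing gives exactly \eqref{eq:traceindep}. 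Since the pairing on the left is defined intrinsically without reference to a basis, the right-hand side is basis-independent.

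As a concrete alternative (which I would probably write out in the paper itself for readers not comfortable with the tensor formalism), one can verify invariance by a direct change-of-basis computation. Let $(f_n)_{n=1}^d$ be another basis and write $f_n = \sum_k a_{nk} e_k$ for an invertible matrix $A = (a_{nk})$. The defining relations $\langle f_n, f_m^*\rangle = \delta_{nm}$ force $f_m^* = \sum_l (A^{-1})_{lm}\, e_l^*$. Substituting into $\sum_{n,m} V(f_n, f_m)\, W(f_n^*, f_m^*)$ and using bilinearity of $V$ and $W$, the coefficient of $V(e_k, e_l)\, W(e_p^*, e_q^*)$ factors as
$$\Bigl(\sum_n a_{nk} (A^{-1})_{pn}\Bigr) \Bigl(\sum_m a_{ml} (A^{-1})_{qm}\Bigr) = (A^{-1}A)_{pk}\,(A^{-1}A)_{ql} = \delta_{pk}\,\delta_{ql},$$
so the quadruple sum collapses to $\sum_{k,l} V(e_k, e_l)\, W(e_k^*, e_l^*)$, giving the claimed basis-independence.

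There is essentially no serious obstacle here; the only delicate point is the transformation law for dual bases, namely that if primal bases are related by $A$ then the dual bases are related by $(A^{-1})^T$, which must be invoked correctly when substituting. I expect the tensor-pairing viewpoint to yield the cleanest write-up, with the matrix computation either serving as a self-contained alternative proof or simply being relegated to a brief remark.
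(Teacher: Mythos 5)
Your proposal is correct, and your ``concrete alternative'' is essentially the paper's own proof: the paper writes the new basis and its dual basis as $\tilde e_n=\sum_i a_{ni}e_i$, $\tilde e_n^*=\sum_i b_{ni}e_i^*$, derives from $\langle \tilde e_n,\tilde e_m^*\rangle=\delta_{nm}$ that $B=(A^{-1})^T$ (in the form $AB^T=I$, hence also $\sum_i a_{in}b_{im}=\delta_{nm}$), and then collapses the quadruple sum exactly as you do; your direct use of $f_m^*=\sum_l (A^{-1})_{lm}e_l^*$ is the same computation with the inverse-transpose made explicit. Your tensor-pairing argument is a correct and cleaner conceptual repackaging: identifying $V$ with an element of $E^*\otimes E^*$ and $W$ with an element of $E\otimes E$ and observing that \eqref{eq:traceindep} is their canonical pairing makes the basis-independence immediate, at the cost of invoking the (standard, basis-free) well-definedness of that pairing, which is precisely what the matrix computation verifies by hand. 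Either write-up is adequate; no gap.
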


The following It\^o formula is a version of \cite[Theorem 26.7]{Kal} that does not use the Euclidean structure of a finite dimensional Banach space. The proof can be found in \cite{Y17FourUMD}.

\begin{theorem}[It\^o formula]\label{thm:itoformula}
 Let $d$ be a natural number, $X$ be a $d$-dimensional Banach space, $f\in C^2(X)$, $M:\mathbb R_+ \times \Omega \to X$ be a martingale. Let $(x_n)_{n=1}^d$ be a basis of $X$, $(x_n^*)_{n=1}^d$ be the corresponding dual basis. Then for each $t\geq 0$
 \begin{equation}\label{eq:itoformula}
  \begin{split}
   f(M_t) = f(M_0)&+ \int_0^t \langle \partial_xf(M_{s-}), \ud M_s\rangle\\
&+ \frac 12 \int_0^t \sum_{n,m=1}^d f_{x_n, x_m}(M_{s-})\ud[\langle M, x_n^*\rangle,\langle M, x_m^*\rangle]_s^c\\
&+ \sum_{s\leq t}(\Delta f(M_s) - \langle  \partial_xf(M_{s-}), \Delta M_{s}\rangle).
  \end{split}
 \end{equation}
\end{theorem}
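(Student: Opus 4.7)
The plan is to reduce the theorem to the classical finite-dimensional It\^o formula (e.g.\ \cite[Theorem 26.7]{Kal}), applied after choosing an auxiliary Euclidean structure on $X$, and then to observe that the resulting formula is actually basis-independent thanks to Lemma \ref{lemma:traceindep}.

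First, I would fix an arbitrary inner product on $X$ turning it into a $d$-dimensional Hilbert space, and let $(e_n)_{n=1}^d$ be the associated orthonormal basis with dual basis $(e_n^*)_{n=1}^d$. Viewing $M$ as an $\mathbb{R}^d$-valued semimartingale via this basis, the classical It\^o formula of \cite{Kal} yields
\[
 f(M_t) = f(M_0) + \int_0^t \langle \partial_x f(M_{s-}), \ud M_s\rangle + \frac 12 \int_0^t \sum_{n,m=1}^d f_{e_n, e_m}(M_{s-})\ud[\langle M, e_n^*\rangle,\langle M, e_m^*\rangle]_s^c + \sum_{s\leq t}(\Delta f(M_s) - \langle \partial_x f(M_{s-}),\Delta M_s\rangle),
\]
where the first integral and the jump sum are written in their natural coordinate-free form (the identifications $\nabla f \cdot dM = \langle \partial_x f, dM\rangle$ and $\nabla f \cdot \Delta M = \langle \partial_x f,\Delta M\rangle$ do not involve the inner product once we view $\partial_x f \in X^*$).

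Next, I would show that the quadratic term is independent of the choice of basis, so it can be replaced by the analogous expression in the given basis $(x_n)_{n=1}^d$. Fix $s \geq 0$ and $\omega$. The map $(u,v)\mapsto f_{u,v}(M_{s-}(\omega))$ is a symmetric bilinear form $V$ on $X\times X$ (namely $D^2 f(M_{s-})$), and the map $(\varphi,\psi)\mapsto \frac{d}{ds}[\langle M,\varphi\rangle,\langle M,\psi\rangle]^c$ (or, equivalently, the Radon--Nikod\'ym density of the matrix-valued measure $d[\langle M,\cdot\rangle,\langle M,\cdot\rangle]^c_s$) is a symmetric bilinear form $W$ on $X^*\times X^*$. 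Applying Lemma \ref{lemma:traceindep} pointwise yields
\[
 \sum_{n,m=1}^d f_{e_n,e_m}(M_{s-})\ud[\langle M, e_n^*\rangle,\langle M, e_m^*\rangle]_s^c = \sum_{n,m=1}^d f_{x_n, x_m}(M_{s-})\ud[\langle M, x_n^*\rangle,\langle M, x_m^*\rangle]_s^c,
\]
as an identity of random measures on $\mathbb{R}_+$; integrating in $s$ gives equality of the quadratic integral terms. Substituting this into the previous display yields \eqref{eq:itoformula}.

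The main obstacle is justifying the pointwise trace identity as an identity of measures, i.e.\ making sure that Lemma \ref{lemma:traceindep} can indeed be applied $ds$-a.e.\ to the Radon--Nikod\'ym derivative of the absolutely continuous part $[\langle M,\cdot\rangle,\langle M,\cdot\rangle]^c$ against some dominating scalar measure (for instance the trace of the matrix-valued measure in the basis $(e_n^*)$). Once this linear-algebraic step is in place, everything else is a routine transcription of the classical It\^o formula to a coordinate-free setting, since the linear stochastic integral and the jump correction term are manifestly independent of any choice of basis.
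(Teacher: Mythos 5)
Your proposal is correct and is essentially the intended argument: the paper defers the proof to \cite{Y17FourUMD}, but the route there is exactly this reduction to the Euclidean It\^o formula of \cite[Theorem 26.7]{Kal} followed by the observation that the first-order and jump terms are already coordinate-free and that the second-order term is basis-independent by Lemma \ref{lemma:traceindep} (which is stated immediately before the theorem for precisely this purpose). The only remark worth making is that the ``main obstacle'' you flag is not really there: since $\varphi\mapsto\langle M,\varphi\rangle$ is linear and the covariation (and hence, by uniqueness of the continuous/pure-jump decomposition, its continuous part) is bilinear in the two functionals, the change-of-basis algebra of Lemma \ref{lemma:traceindep} can be applied directly to the increments of the processes $t\mapsto[\langle M,\cdot\rangle,\langle M,\cdot\rangle]^c_t$ on a single full-measure set, so no Radon--Nikod\'ym density with respect to a dominating scalar measure is needed.
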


\begin{proposition}\label{prop:approxXbyYvalued}
 Let $X$ be a finite dimensional Banach space, $p\in (1,\infty)$. Let $Y = X \oplus \mathbb R$ be a Banach space such that $\|(x, r)\|_Y = (\|x\|^p_X + |r|^p)^{\frac 1p}$. Then $\beta_{p, Y} = \beta_{p, X}$. Moreover, if $M:\mathbb R_+ \times \Omega \to X$ is a martingale on a probability space $(\Omega, \mathcal F, \mathbb P)$ with a filtration $\mathbb F = (\mathcal F_t)_{t\geq 0}$, then there exists a sequence $(M^m)_{m\geq 1}$ of $Y$-valued martingales on an enlarged probability space $(\overline{\Omega}, \overline{\mathcal F}, \overline{\mathbb P})$ with an enlarged filtration $\overline{\mathbb F} = (\overline{\mathcal F}_t)_{t\geq 0}$ such that
 \begin{enumerate}
  \item $M^m_t$ has absolutely continuous distributions with respect to the Lebesgue measure on $Y$ for each $m\geq 1$ and $t\geq 0$;
  \item $M^m_t \to (M_t,0)$ pointwise as $m\to \infty$ for each $t\geq 0$;
  \item if for some $t\geq 0$ $\mathbb E\|M_t\|^p<\infty$, then for each $m\geq 1$ one has that $\mathbb E\|M^m_t\|^p<\infty$ and $\mathbb E\|M^m_t-(M_t,0)\|^p\to 0$ as $m\to \infty$;
  \item if $M$ is continuous, then $(M^m)_{m\geq 1}$  are continuous as well,
  \item if $M$ is purely discontinuous, then $(M^m)_{m\geq 1}$ are purely discontinuous as well.
 \end{enumerate}
\end{proposition}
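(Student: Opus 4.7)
I would split the proof into two independent parts: first the UMD-constant identity, then the construction of the approximating sequence. Both pieces are largely soft and avoid the deeper analytic machinery of the paper.

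\textbf{Step 1: $\beta_{p,Y} = \beta_{p,X}$.} The lower bound is immediate, since $x\mapsto (x,0)$ is an isometric embedding of $X$ into $Y$, and any $X$-valued martingale difference sequence $(d_j)$ transplanted to $Y$ has identical $L^p$-norms for both the signed and unsigned sums. For the upper bound, given a $Y$-valued martingale difference sequence $(d_j)$, write $d_j = (d_j^X, d_j^{\R})$ using the coordinate projections, which are contractive for the $\ell^p$-norm. Both components are themselves martingale difference sequences (in $X$ and in $\R$). By the definition of $\|\cdot\|_Y^p$,
\begin{align*}
\E\Bigl\|\sum_j \eps_j d_j\Bigr\|_Y^p
 &= \E\Bigl\|\sum_j \eps_j d_j^X\Bigr\|_X^p + \E\Bigl|\sum_j \eps_j d_j^{\R}\Bigr|^p\\
 &\leq \beta_{p,X}^p\,\E\Bigl\|\sum_j d_j^X\Bigr\|_X^p + (p^*-1)^p\,\E\Bigl|\sum_j d_j^{\R}\Bigr|^p
 \leq \beta_{p,X}^p\,\E\Bigl\|\sum_j d_j\Bigr\|_Y^p,
\end{align*}
using $\beta_{p,\R} = p^*-1 \leq \beta_{p,X}$ at the last step.

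\textbf{Step 2: construction of $(M^m)_{m\geq 1}$.} Enlarge $(\Omega,\F,\P)$ by two jointly independent random variables $\xi$ (valued in $X$) and $\eta$ (real-valued), each with a smooth strictly positive Lebesgue density — for instance, $\xi = \sum_{i=1}^d \gamma_i x_i$ for a basis $(x_i)$ of $X$ with $(\gamma_i)$ i.i.d.\ standard normals, and $\eta$ an independent standard normal — chosen independent of $\F_\infty$. Let $\overline{\F}_t$ be the usual augmentation of $\F_t\vee\sigma(\xi,\eta)$, so that $\xi,\eta$ are $\overline{\F}_0$-measurable and $M$ remains an $\overline{\mathbb F}$-martingale (independence gives $\E(M_t\mid\overline{\F}_s)=\E(M_t\mid\F_s)=M_s$). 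Define
\[
 M^m_t := \Bigl(M_t + \tfrac{1}{m}\xi,\ \tfrac{1}{m}\eta\Bigr), \qquad t\geq 0,\ m\geq 1.
\]
Each $M^m$ is clearly an $\overline{\mathbb F}$-martingale.

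\textbf{Step 3: verification of the five properties.} For (1), $M_t$ is independent of $(\xi,\eta)$, so the law of $M^m_t$ is the image under $(y,z)\mapsto (y,\tfrac{1}{m}z)$ of the convolution of the law of $(M_t,0)$ with the law of $(\tfrac{1}{m}\xi,\eta)$; since $\xi$ has a smooth density on $X$ and $\eta$ an independent smooth density on $\R$, Fubini gives absolute continuity with respect to $\lambda_X\otimes\lambda_\R$, which differs from $\lambda_Y$ only by a positive constant. For (2), $\|M^m_t-(M_t,0)\|_Y = \tfrac{1}{m}\bigl(\|\xi\|_X^p+|\eta|^p\bigr)^{1/p}\to 0$ pointwise. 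For (3), the same identity gives $\E\|M^m_t-(M_t,0)\|_Y^p = m^{-p}\E\bigl(\|\xi\|^p+|\eta|^p\bigr)\to 0$, while the triangle inequality controls $\E\|M^m_t\|_Y^p$ by $\E\|M_t\|^p$ plus a finite Gaussian moment. For (4), adding the $t$-independent vector $(\tfrac{1}{m}\xi,\tfrac{1}{m}\eta)$ preserves path continuity. For (5), for every $y^*=(x^*,r)\in Y^*$ we have $\langle M^m_t,y^*\rangle = \langle M_t,x^*\rangle + c$ where $c=\tfrac{1}{m}\langle\xi,x^*\rangle + \tfrac{r}{m}\eta$ is time-constant, so the real-valued local martingale $\langle M^m,y^*\rangle$ differs from the purely discontinuous $\langle M,x^*\rangle$ by a constant and is therefore itself purely discontinuous.

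\textbf{Expected obstacle.} There is no serious obstacle — this proposition is a bookkeeping lemma. The only mild care needed is to check that the enlargement by an independent $\sigma$-field concentrated at time $0$ does not destroy the right-continuity of the filtration or the continuous / purely discontinuous structure of $M$; both follow from the independence of $(\xi,\eta)$ from $\F_\infty$ and from the fact that the scalar local martingales $\langle M,x^*\rangle$ are unchanged by the enlargement.
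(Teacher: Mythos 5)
Your proof is correct, and it is worth noting that the paper itself gives no argument here: it proves (1)--(3) by citing the construction in \cite{Y17FourUMD} and remarks that (4)--(5) follow from that construction, so your write-up supplies in full what the paper delegates to the reference. Your construction --- initial enlargement by an independent pair $(\xi,\eta)$ with smooth strictly positive densities, the augmented filtration $\overline{\mathcal F}_t = \mathcal F_t\vee\sigma(\xi,\eta)$ (augmented), and $M^m_t=(M_t+\tfrac1m\xi,\tfrac1m\eta)$ --- is exactly the smoothing-by-an-independent-time-constant-perturbation idea behind the cited result, and the points that need care are all handled: independence of $(\xi,\eta)$ from $\mathcal F_\infty$ keeps $M$ an $\overline{\mathbb F}$-martingale and preserves the usual conditions; convolution with an absolutely continuous law gives (1), with $\lambda_X\otimes\lambda_{\mathbb R}$ a constant multiple of $\lambda_Y$ by uniqueness of translation-invariant measures; and since the added term is constant in time, $[\langle M^m,y^*\rangle]=[\langle M,x^*\rangle]$ for $y^*=(x^*,r)$, which yields (4) and (5) because the quadratic variation in \eqref{eq:defquadvar} does not see the enlargement. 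Your Step 1 is also correct: the isometric embedding $x\mapsto(x,0)$ gives $\beta_{p,Y}\geq\beta_{p,X}$, and the coordinate splitting $d_j=(d_j^X,d_j^{\mathbb R})$ together with $\|(x,r)\|_Y^p=\|x\|_X^p+|r|^p$ and $\beta_{p,\mathbb R}=p^*-1\leq\beta_{p,X}$ gives the reverse inequality, so the UMD-constant identity is established by a clean and elementary argument.
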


\begin{proof}
 The proof of (1)-(3) follows from \cite{Y17FourUMD}, while (4) and (5) follow from the construction of $M^m$ and $N^m$ given in \cite{Y17FourUMD}.
\end{proof}

\begin{remark}\label{rem:sumofapprox}
 Notice that the construction in \cite{Y17FourUMD} also allows us to sum these approximations for different martingales. Namely, if $M$ and $N$ are two $X$-valued martingales, then we can construct the corresponding $Y$-valued martingales $(M^m)_{m\geq 1}$ and $(N^m)_{m\geq 1}$ as in Proposition \ref{prop:approxXbyYvalued} in such a way that $M^m_t + N^m_t$ has an absolutely continuous distribution for each $t\geq 0$ and $m\geq 1$.
\end{remark}

\begin{proof}[Proof of Theorem \ref{thm:Meyer-Yoeurp}]
{\em Step 1: finite dimensional case.} Let $X$ be finite dimensional. Then $M^d$ and $M^c$ exist due to Remark \ref{rem:YoeMey}. Without loss of generality $\mathcal F_t = \mathcal F_{\infty}$, $M^d_t = M^d_{\infty}$ and $M^c_t = M^c_{\infty}$. Let $d$ be the dimension of $X$.
 
 Let $\vertiii{\cdot}$ be a Euclidean norm on $X$. Then $(X, \vertiii{\cdot})$ is a Hilbert space, and by Remark \ref{rem:qvcont} the quadratic variation $[M^c]$ exists and has a continuous version. Let us show that without loss of generality we can suppose that $[M^c]$ is a.s.\ absolutely continuous with respect to the Lebesgue measure on $\mathbb R_+$. Let $A:\mathbb R_+ \times \Omega \to \mathbb R_+$ be as follows: $A_t = [M^c]_t + t$. Then $A$ is strictly increasing continuous, $A_0=0$ and $A_{\infty} = \infty$ a.s. Let the time-change $\tau = (\tau_s)_{s\geq 1}$ be defined as in Theorem \ref{thm:apptimechange}. Then by Theorem \ref{thm:apptimechange}, $M^c \circ \tau$ is a continuous martingale, $M^d\circ \tau$ is a purely discontinuous martingale, $(M^c\circ \tau)_0=0$, $(M^d\circ \tau)_0=M^d_0$ and due to the Kazamaki theorem \cite[Theorem 17.24]{Kal}, $[M^c\circ \tau] = [M^c]\circ \tau$. Therefore for all $t>s\geq 0$ by Theorem \ref{thm:apptimechange} and the fact that $\tau_t \geq \tau_s$ a.s.
 \begin{align*}
  [M^c\circ \tau]_t - [M^c\circ \tau]_s = [M^c]_{\tau_t} - [M^c]_{\tau_s} &\leq [M^c]_{\tau_t} - [M^c]_{\tau_s} + (\tau_t - \tau_s)\\
  & = ([M^c]_{\tau_t}+\tau_t) - ([M^c]_{\tau_s} + \tau_s)\\
  &= A_{\tau_t} - A_{\tau_s}=t-s.
 \end{align*}
 Hence $[M^c\circ \tau]$ is a.s.\ absolutely continuous with respect to the Lebesgue measure on~$\mathbb R_+$. Moreover, $(M^i\circ \tau)_{\infty} = M^i_{\infty}$, $i\in\{c,d\}$, so this time-change argument does not affect~\eqref{eq:Meyer-Yoeurp}. Hence we can redefine $M^c := M^c\circ \tau$, $M^d := M^d \circ \tau$, $\mathbb F = (\mathcal F_s)_{s\geq 0} := \mathbb G = (\mathcal F_{\tau_s})_{s\geq 0}$.
 
 Since $[M^c]$ is a.s.\ absolutely continuous with respect to the Lebesgue measure on~$\mathbb R_+$ and thanks to Theorem \ref{thm:Brrepres}, we can extend $\Omega$ and find a $d$-dimensional Wiener process $W:\mathbb R_+\times \Omega \to \mathbb R^d$ and a stochastically integrable progressively measurable function $\Phi: \mathbb R_+ \times\Omega \to \mathcal L(\mathbb R^d, X)$ such that $M^c = \Phi \cdot W$. 
 
 Let $U:X \times X \to \mathbb R$ be the Burkholder function that was discussed in Remark~\ref{rem:propertiesofU} and Remark \ref{rem:propertiesofV}. Let us show that $\mathbb E U(M_t, M^d_t)\leq 0$. 
 
 Due to Proposition \ref{prop:approxXbyYvalued} and Remark \ref{rem:sumofapprox} we can assume that $M^c_s$, $M^d_s$ and $M_s = M^d_s + M^c_s$ have absolutely continuous distributions with respect to the Lebesgue measure $\lambda_X$ on $X$ for each $s\geq 0$. Let $(x_n)_{n=1}^d$ be a basis of $X$, $(x_n^*)_{n=1}^d$ be the corresponding dual basis of $X^*$ (see Definition \ref{def:corrbasis}). By the It\^o formula \eqref{eq:itoformula},
 \begin{equation}\label{eq:EU(M^d+M^c,M^d)}
  \begin{split}
     \mathbb E U(M_t,M^d_t) = \mathbb E U(M_0,M^d_0) &+ \mathbb E \int_{0}^t \langle \partial_x U(M_{s-},M^d_{s-}),\ud M_s\rangle\\
  &+\mathbb E \int_{0}^t \langle \partial_y U(M_{s-},M^d_{s-}),\ud M^d_s\rangle + \mathbb E I_1 + \mathbb E I_2,
  \end{split}
 \end{equation}
where
\begin{align*}
 I_1 &= \sum_{0<s\leq t}[\Delta U(M_s,M^d_s)-\langle \partial_x U(M_{s-},M^d_{s-}),\Delta M_s\rangle - \langle \partial_y U(M_{s-},M^d_{s-}),\Delta M^d_s\rangle],\\
 I_2 &= \frac 12 \int_0^t\sum_{i,j=1}^d U_{x_i, x_j}(M_{s-}, M^d_{s-})\ud[\langle M, x_i^*\rangle, \langle M, x_j^*\rangle]_s^c\\
 &\quad =\frac 12 \int_0^t\sum_{i,j=1}^d U_{x_i, x_j}(M_{s-}, M^d_{s-})\langle \Phi^*(s) x_i^*, \Phi^*(s) x_j^*\rangle\ud s.
\end{align*}
(Recall that by \eqref{eq:VthroughU} and Remark \ref{rem:propertiesofV}(C), $U$ is Fr\'echet-differentiable a.s.\ on $X\times X$, hence $\partial_x U$ and $\partial_y U$ are well-defined. Moreover, $U$ is zigzag-concave, so $U$ is concave in the first variable, and therefore the second-order derivatives $U_{x_i, x_j}$ in the first variable are well-defined and exist a.s.\ on $X \times X$ by the Alexandrov theorem \cite[Theorem 6.4.1]{EG}.) The last equality holds due to Theorem \ref{thm:itoformula} and the fact that by Lemma \ref{lemma:covstochintwrtcylbrmo} for all $s\geq 0$~a.s.\
\begin{align*}
 [\langle M, x_i^*\rangle, \langle M, x_j^*\rangle]_s^c = [\langle \Phi\cdot W, x_i^*\rangle, \langle \Phi\cdot W, x_j^*\rangle]_s &= [(\Phi^*x_i^*)\cdot W, (\Phi^*x_j^*)\cdot W]_s\\
 &=\int_0^s \langle\Phi^*(r)x_i^*, \Phi^*(r)x_j^*\rangle\ud r.
\end{align*}
Let us first show that $I_1\leq 0$ a.s. Indeed, since $M^d$ is a purely discontinuous part of $M$, then by Definition \ref{def:purelydiscXvalued} $\langle M^d,x^*\rangle$ is a purely discontinuous part of $\langle M,x^*\rangle$, and due to Remark \ref{rem:YoeMeyR} a.s.\ for each $t\geq 0$
$$
\Delta |\langle M^d,x^*\rangle|_t^2= \Delta [\langle M^d,x^*\rangle]_t= \Delta [\langle M,x^*\rangle]_t=\Delta |\langle M,x^*\rangle|_t^2
$$ 
for each $x^* \in X^*$. Thus for each $s\geq 0$ by \eqref{eq:dirderivative} and \eqref{eq:dirderivativeV} $\mathbb P$-a.s.
\begin{align*}
 &\Delta U(M_s,M^d_s)-\langle \partial_x U(M_{s-},M^d_{s-}),\Delta M_s\rangle - \langle \partial_y U(M_{s-},M^d_{s-}),\Delta M^d_s\rangle\\
 &= V(M_{s-}+M^d_{s-}+2\Delta M_s,M^d_{s-}-M_{s-})- V(M_{s-}+M^d_{s-},M^d_{s-}-M_{s-})\\ 
 &\quad-\langle \partial_x V(M_{s-}+M^d_{s-},M^d_{s-}-M_{s-}),2\Delta M_s\rangle \leq 0,
\end{align*}
so $I_1\leq 0$ a.s., and $\mathbb E I_1 \leq 0$.
Now we show that
\[
 \mathbb E \Bigl(\int_{0}^t \langle \partial_x U(M_{s-},M^d_{s-}),\ud M_s\rangle + \int_{0}^t \langle \partial_y U(M_{s-},M^d_{s-}),\ud M^d_s\rangle\Bigr) = 0.
\]
Indeed,
\begin{align*}
 \int_{0}^t \langle \partial_x U(M_{s-},M^d_{s-}),\ud M_s\rangle &+ \int_{0}^t \langle \partial_y U(M_{s-},M^d_{s-}),\ud M^d_s\rangle\\
 &=\int_{0}^t \langle \partial_x V(M_{s-}+M^d_{s-},M^d_{s-}-M_{s-}),\ud (M_s+M^d_s)\rangle \\
 &+ \int_{0}^t \langle  \partial_y V(M_{s-}+M^d_{s-},M^d_{s-}-M_{s-}),\ud (M^d_s-M_s)\rangle
\end{align*}
so by Lemma \ref{lemma:stochintmoment} and Remark \ref{rem:propertiesofV}(E) it is a martingale which starts at zero, hence its expectation is zero.

Finally let us show that $I_2 \leq 0$ a.s. Fix $s\in [0,t]$ and $\omega \in \Omega$. Then $x^* \mapsto \|\Phi^*(s, \omega) x^*\|^2$ defines a nonnegative definite quadratic form on $X^*$, and since any nonnegative quadratic form defines a Euclidean seminorm, there exists a basis $(\tilde x_n^*)_{n=1}^d$ of $X^*$ and a $\{0, 1\}$-valued sequence $(a_n)_{n=1}^d$ such that
\[
 \langle \Phi^*(s, \omega) \tilde x_n^*, \Phi^*(s, \omega) \tilde x_m^* \rangle = a_n\delta_{mn},\;\;\; m,n= 1,\ldots,d.
\]
Let $(\tilde x_n)_{n=1}^d$ be the corresponding dual basis of $X$ as it is defined in Definition \ref{def:corrbasis}. Then due to Lemma~\ref{lemma:traceindep} and the linearity of $\Phi$ and directional derivatives of $U$ (we skip $s$ and $\omega$ for the simplicity of the expressions)
\begin{equation*}
 \begin{split}
   \sum_{i,j=1}^d U_{x_i, x_j}(M_{s-}, M^d_{s-})\langle \Phi^* x_i^*, \Phi^* x_j^*\rangle &=\sum_{i,j=1}^d U_{\tilde x_i,\tilde x_j}(M_{s-}, M^d_{s-})\langle \Phi^*\tilde x_i^*, \Phi^* \tilde x_j^*\rangle\\
   &=\sum_{i=1}^d U_{\tilde x_i,\tilde x_i}(M_{s-}, M^d_{s-})\|\Phi^*\tilde x_i^*\|^2.
 \end{split}
\end{equation*}
Recall that $U$ is zigzag-concave, so $t\mapsto U(x+t\tilde x_i, y)$ is concave for each $x,y\in X$, $i=1,\ldots,d$. Therefore $U_{\tilde x_i,\tilde x_i}(M_{s-}, M^d_{s-}) \leq 0$ a.s., and a.s.
\[
 \sum_{i=1}^d U_{\tilde x_i,\tilde x_i}(M_{s-}(\omega), M^d_{s-}(\omega))\|\Phi^*(s,\omega)\tilde x_i^*\|^2\leq 0.
\]
Consequently, $I_2\leq 0$ a.s., and by \eqref{eq:EU(M^d+M^c,M^d)}, Remark \ref{rem:propertiesofU}(B) and the fact that $M^d_0=M_0$
$$
\mathbb E U(M_t, M^d_t)\leq \mathbb E U(M_0, M_0)\leq 0.
$$ 
By \eqref{eq:ineqonU},
 $\mathbb E \|M^d_t\|^p - \beta_{p, X}^p\mathbb E\|M_t\|^p \leq \mathbb E U(M_t, M^d_t) \leq 0$,
so the first part of \eqref{eq:Meyer-Yoeurp} holds. 

The second part of \eqref{eq:Meyer-Yoeurp} follows from the same machinery applied for $V$. Namely, one can analogously show that
\[
 \mathbb E\|M^c_t\|^p - \beta_{p, X}^p \mathbb E \|M_t\|^p \leq \mathbb E U(M_t, M^c_t) = \mathbb E V(M^d + 2M^c, -M^d) \leq 0
\]
by using a $V$-version of \eqref{eq:EU(M^d+M^c,M^d)}, inequality \eqref{eq:dirderivativeV}, and the fact that $V$ is concave in the first variable a.s.\ on $X\times X$.

{\em Step 2: general case.} Without loss of generality we set $\mathcal F_{\infty} = \mathcal F_t$. Let $M_t = \xi$. If $\xi$ is a simple function, then it takes its values in a finite dimensional subspace $X_0$ of $X$, and therefore $(M_s)_{s\geq 0} = (\mathbb E(\xi|\mathcal F_s))_{s\geq 0}$ takes its values in $X_0$ as well, so the theorem and \eqref{eq:Meyer-Yoeurp} follow from Step 1.
 
 Now let $\xi$ be general. Let $(\xi_n)_{n\geq 1}$ be a sequence of simple $\mathcal F_t$-measurable functions in $L^p(\Omega; X)$ such that $\xi_n \to \xi$ as $n\to \infty$ in $L^p(\Omega; X)$. For each $n\geq 1$ define $\mathcal F_t$-measurable $\xi_n^d$ and $\xi_n^c$ such that
 \begin{equation}\label{eq:M^dnM^cn}
  \begin{split}
   M^{d, n} &= (M^{d, n}_s)_{s\geq 0}=(\mathbb E(\xi_n^d|\mathcal F_{s}))_{s\geq 0},\\
   M^{c,n} &= (M^{c, n}_s)_{s\geq 0}=(\mathbb E(\xi_n^c|\mathcal F_{s}))_{s\geq 0}
  \end{split}
 \end{equation}
are the respectively purely discontinuous and continuous parts of martingale $M^n=(\mathbb E(\xi_n|\mathcal F_{s}))_{s\geq 0}$ as in Remark \ref{rem:YoeMey}. Then due to Step 1 and \eqref{eq:Meyer-Yoeurp}, $(\xi_n^d)_{n\geq 1}$ and $(\xi_n^c)_{n\geq 1}$ are Cauchy sequences in $L^p(\Omega; X)$. Let $\xi^c := L^p-\lim_{n\to \infty} \xi^c_n$ and $\xi^d := L^p-\lim_{n\to\infty} \xi^d_n$. Define the $X$-valued $L^p$-martingales $M^d$ and $M^c$ by
 \begin{align*}
  M^d = (M^d_s)_{s\geq 0} := (\mathbb E(\xi^d|\mathcal F_{s}))_{s\geq 0},\;\;\;
    M^c = (M^c_s)_{s\geq 0} := (\mathbb E(\xi^c|\mathcal F_{s}))_{s\geq 0}.
 \end{align*}
Thanks to Proposition \ref{thm:M^p,discomplete}, $M^d$ is purely discontinuous, and due to Proposition~\ref{prop:M^p,ciscomplete} $M^c$ is continuous and $M^c_0=0$, so $M = M^d + M^c$ is the desired decomposition.

 The uniqueness of the decomposition follows from Lemma \ref{lemma:contpuredisczero}. For estimates \eqref{eq:Meyer-Yoeurp} we note that by Step 1, \eqref{eq:Meyer-Yoeurp} applied for Step 1, and \cite[Proposition 4.2.17]{HNVW1} for each $n\geq 1$
 $$
 (\mathbb E\|\xi_n^d\|^p)^{\frac 1p}\leq \beta_{p,X}(\mathbb E\|\xi_n\|^p)^{\frac 1p},\;\;\;\;
 (\mathbb E\|\xi_n^c\|^p)^{\frac 1p}\leq \beta_{p,X}(\mathbb E\|\xi_n\|^p)^{\frac 1p},
 $$ 
 and it remains to let $n\to \infty$.
\end{proof}

\begin{remark}\label{cor:approxofconandpurediscmartbyfd}
 Let $X$ be a UMD Banach space, $1<p<\infty$, $M:\mathbb R_+ \times \Omega \to X$ be continuous (resp.\ purely discontinuous) $L^p$-martingale. Then there exists a sequence $(M^n)_{n\geq 1}$ of continuous (resp.\ purely discontinuous) $X$-valued $L^p$-martingales such that $M^n$ takes its values is a finite dimensional subspace of $X$ for each $n \geq 1$ and $M^n_{\infty} \to M_{\infty}$ in $L^p(\Omega; X)$ as $n\to \infty$. Such a sequence can be provided e.g.\ by \eqref{eq:M^dnM^cn}.
\end{remark}

We have proven the Meyer-Yoeurp decomposition in the UMD setting. Next we prove a converse result which shows the necessity of the UMD property.

\begin{theorem}\label{thm:exampleforlowboundofA}
 Let $X$ be a finite dimensional Banach space, $p\in(1,\infty)$, $\delta\in(0, (\beta_{p, X}-1)\wedge 1)$. Then there exist a purely discontinuous martingale $M^d:\mathbb R_+ \times \Omega \to X$, a continuous martingale $M^c:\mathbb R_+ \times \Omega \to X$ such that $\mathbb E \|M^d_{\infty}\|^p, \mathbb E \|M^c_{\infty}\|^p<\infty$, $M^d_0 = M^c_0=0$, and for $M = M^d + M^c$ and $i\in\{c,d\}$ the following hold
 \begin{equation}\label{eq:exampleforlowboundofA}
 (\mathbb E \|M^i_{\infty}\|^p)^{\frac 1p} \geq \Bigl(\frac{\beta_{p,X}-1}{2}-\delta\Bigr) (\mathbb E \|M_{\infty}\|^p)^{\frac 1p}.
 \end{equation}
\end{theorem}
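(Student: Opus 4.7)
The strategy is to reduce the continuous-time claim to the discrete-time lower bound $\frac{\beta_{p,X}-1}{2}$ from Proposition~\ref{prop:UMD01sequence}, by embedding a near-extremal discrete martingale into continuous time so that each discrete increment is realised either by a predictable jump (purely discontinuous) or by a continuous Brownian segment. Concretely, for any $\eta\in(0,2\delta)$, by the definition of $\beta_{p,X}$ there exist a discrete martingale difference sequence $(d_j)_{j=1}^n$ in $L^p(\Omega_0;X)$ adapted to some filtration $(\mathcal G_j)_{j=0}^n$ and signs $\sigma_j\in\{-1,+1\}$ with $\|\sum_j\sigma_jd_j\|_{L^p}\geq(\beta_{p,X}-\eta)\|\sum_jd_j\|_{L^p}$. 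Setting $\varepsilon_j:=(\sigma_j+1)/2\in\{0,1\}$, the identities $2\sum_j\varepsilon_jd_j=\sum_j\sigma_jd_j+\sum_jd_j$ and $2\sum_j(1-\varepsilon_j)d_j=\sum_jd_j-\sum_j\sigma_jd_j$ together with the reverse triangle inequality give
\[
\Bigl\|\sum_{j=1}^n\varepsilon_jd_j\Bigr\|_{L^p},\ \Bigl\|\sum_{j=1}^n(1-\varepsilon_j)d_j\Bigr\|_{L^p}\,\geq\,\tfrac{\beta_{p,X}-1-\eta}{2}\Bigl\|\sum_{j=1}^nd_j\Bigr\|_{L^p}\,>\,\Bigl(\tfrac{\beta_{p,X}-1}{2}-\delta\Bigr)\Bigl\|\sum_{j=1}^nd_j\Bigr\|_{L^p}.
\]

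Next, I would realise this discrete martingale in continuous time on an enlarged space $\Omega=\Omega_0\times\Omega'$ supporting independent $(\dim X)$-dimensional Brownian motions $(B^{(j)})_{j=1}^n$ on $[0,1]$, after replacing each $d_j$ by an equidistributed, $\mathcal G_{j-1}\vee\sigma(B^{(j)})$-measurable copy through a measurable pushforward of a standard Gaussian. On each interval $[j-1,j]$ I define a martingale segment $Z^{(j)}$ with $Z^{(j)}_{j-1}=0$ and $Z^{(j)}_j=d_j$ as follows: if $\varepsilon_j=1$, set $Z^{(j)}_t:=d_j\mathbf 1_{[j-1/2,\,j]}(t)$, a purely discontinuous martingale with a single jump at the predictable time $j-\tfrac12$, and add it to $M^d$; if $\varepsilon_j=0$, set $Z^{(j)}_t:=\mathbb E(d_j\mid\mathcal G_{j-1}\vee\mathcal F^{B^{(j)}}_{t-(j-1)})$, a continuous Brownian martingale interpolating from $0$ to $d_j$, and add it to $M^c$. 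Concatenating, $M^c$ is continuous with $M^c_0=0$, each coordinate $\langle M^d,x^*\rangle$ is pure-jump so $M^d$ is purely discontinuous in the sense of Definition~\ref{def:purelydiscXvalued}, and $M:=M^c+M^d$ satisfies $M^d_\infty=\sum_j\varepsilon_jd_j$, $M^c_\infty=\sum_j(1-\varepsilon_j)d_j$, $M_\infty=\sum_jd_j$; substitution into the first-paragraph estimate yields \eqref{eq:exampleforlowboundofA}.

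The principal technical point is the continuous embedding step: for an arbitrary conditionally centred, $L^p$-integrable increment $d_j$ with values in finite-dimensional $X$, one must produce a continuous $X$-valued martingale on $[0,1]$ terminating almost surely at $d_j$ in an enlarged Brownian filtration. Finite-dimensionality of $X$ makes this routine, since the regular conditional law of $d_j$ given $\mathcal G_{j-1}$ is a measurable pushforward of a standard $(\dim X)$-dimensional Gaussian, so $d_j$ can be replaced by a function of $B^{(j)}_1$ and one invokes the Brownian martingale representation. All remaining verifications (martingale property at segment endpoints and at the predictable jump times, right-continuity of the enlarged filtration, and identification of $M^c,M^d$ as the continuous and purely discontinuous parts of $M$) are straightforward.
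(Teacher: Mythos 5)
Your proof is correct, but the passage to continuous time follows a genuinely different route from the paper's. The discrete step is the same in substance: your reverse-triangle-inequality manipulation producing lower bounds simultaneously for $\sum_j\varepsilon_jd_j$ and $\sum_j(1-\varepsilon_j)d_j$ is exactly what Proposition \ref{prop:UMD01sequence} and Remark \ref{rem:thesamelowerboundford_j} encode. The embedding step differs: the paper first reduces to Paley--Walsh martingales (Definition \ref{def:Paley-Walsh}), replaces each Rademacher variable by the explicit continuous $[-1,1]$-valued martingale of Lemma \ref{lem:contapproxradem} whose terminal \emph{sign} is that Rademacher variable, and then runs an $\varepsilon$-bookkeeping argument (the constants $K$ and $L$) to absorb the fact that the continuous segments only approximate the discrete increments; you instead realise every increment exactly --- jumps at the deterministic, hence predictable, times $j-\tfrac12$, and continuous segments produced by the Brownian martingale representation after outsourcing each increment to fresh Gaussian noise --- which avoids both the Paley--Walsh reduction and the approximation error, at the price of invoking the representation theorem (Theorem \ref{thm:Brrepres} in spirit) and a transfer argument. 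Three points in your sketch need a little more care, all routine: (i) the ``equidistributed copy'' must be built recursively so that the \emph{joint} law of the whole sequence $(\tilde d_1,\dots,\tilde d_n)$ is preserved, i.e.\ the conditional law of $\tilde d_j$ is matched given the realised past $\tilde d_1,\dots,\tilde d_{j-1}$ rather than the original $\mathcal G_{j-1}$; this is harmless because $(d_j)$ is still a martingale difference sequence for its natural filtration and only the joint law enters the $L^p$-norms. (ii) On jump intervals you should not make $\tilde d_j$ a function of $B^{(j)}_1$ while the filtration reveals $B^{(j)}$ progressively, since then $Z^{(j)}$ is not adapted at time $j-\tfrac12$; either let $\sigma(\tilde d_j)$ enter the filtration exactly at time $j-\tfrac12$ (no Brownian motion is needed there) or outsource to $B^{(j)}_{1/2}$; the martingale property across segments then holds because $\tilde d_j$ is conditionally centred given the realised past and independent of the unused noise. (iii) Continuity of $t\mapsto\mathbb E\bigl(\tilde d_j\mid\tilde{\mathcal G}_{j-1}\vee\mathcal F^{B^{(j)}}_{t-(j-1)}\bigr)$ needs the representation theorem in the initially enlarged filtration, which follows from independence of $\tilde{\mathcal G}_{j-1}$ and $B^{(j)}$ by a freezing argument. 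With these details filled in you get $M^d_\infty=\sum_j\varepsilon_j\tilde d_j$, $M^c_\infty=\sum_j(1-\varepsilon_j)\tilde d_j$ and $M_\infty=\sum_j\tilde d_j$ exactly, and \eqref{eq:exampleforlowboundofA} follows; the paper's construction buys elementariness and explicitness, yours buys an exact (error-free) realisation.
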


Recall that by \cite[Proposition 4.2.17]{HNVW1} $\beta_{p, X} \geq \beta_{p, \mathbb R}= p^*-1 \geq 1$ for any UMD Banach space $X$ and $1<p<\infty$.

\begin{definition}
 A random variable $r:\Omega \to \{-1, 1\}$ is called a~{\em Rademacher variable} if $\mathbb P(r=1) = \mathbb P(r=-1) = \frac 12$.
\end{definition}

\begin{lemma*}\label{lem:contapproxradem}
 Let $\eps>0$, $p\in (1,\infty)$. Then there exists a continuous martingale $M:[0,1]\times \Omega \to [-1,1]$ with a symmetric distribution such that $\sign M_1$ is a~Rademacher random variable and
 \begin{equation}\label{eq:lemcontapproxradem}
  \|M_1-\sign M_1\|_{L^p(\Omega)}<\eps.
 \end{equation}
\end{lemma*}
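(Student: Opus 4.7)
\smallskip

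\noindent\textbf{Proof plan for Lemma \ref{lem:contapproxradem}.}
The idea is to build $M$ from a one-dimensional Brownian motion stopped when it first exits $(-1,1)$, and then to reparametrise the time axis to $[0,1]$. Let $(B_t)_{t\geq 0}$ be a standard real-valued Brownian motion on some filtered probability space $(\Omega,\mathcal F,(\mathcal F_t)_{t\ge 0},\mathbb P)$ satisfying the usual conditions, with $B_0=0$, and set
\[
\tau := \inf\{t\geq 0 : |B_t|=1\}.
\]
Classical facts about Brownian motion give $\tau<\infty$ a.s.\ and, by optional stopping applied to $B^\tau$ together with the symmetry of $B$, $\mathbb P(B_\tau=1)=\mathbb P(B_\tau=-1)=\tfrac12$.

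Fix any continuous strictly increasing bijection $\varphi:[0,1)\to[0,\infty)$ with $\varphi(0)=0$ (for instance $\varphi(t)=t/(1-t)$) and define
\[
M_t := B_{\varphi(t)\wedge\tau},\quad t\in[0,1),\qquad M_1 := B_\tau.
\]
Since $B^\tau$ takes values in $[-1,1]$ and is a bounded continuous martingale converging a.s.\ to $B_\tau$ as the time parameter tends to $\infty$, the process $M$ has continuous paths on $[0,1]$ (including at $t=1$, by the a.s.\ limit) and $M_t\in[-1,1]$ for every $t$. Consider the filtration $\mathcal G_t:=\mathcal F_{\varphi(t)\wedge\tau}$ for $t\in[0,1)$ and $\mathcal G_1:=\mathcal F_\tau$; by optional stopping applied to the bounded martingale $B^\tau$ we get for all $0\leq s\leq t\leq 1$ that
\[
\mathbb E[M_t\mid \mathcal G_s] = \mathbb E[B_{\varphi(t)\wedge\tau}\mid\mathcal F_{\varphi(s)\wedge\tau}] = B_{\varphi(s)\wedge\tau} = M_s,
\]
so $M$ is a continuous $\mathcal G$-martingale on $[0,1]$ with values in $[-1,1]$. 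The distributional symmetry of $M$ is inherited from the fact that $(-B_t)_{t\geq 0}\stackrel{d}{=}(B_t)_{t\geq 0}$ together with $\tau(-B)=\tau(B)$, so $(-M_t)_{t\in[0,1]}$ and $(M_t)_{t\in[0,1]}$ have the same law.

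Finally, $M_1=B_\tau\in\{-1,1\}$ with equal probabilities, which shows that $\operatorname{sign} M_1$ is well-defined and Rademacher, and that $\operatorname{sign} M_1=M_1$ a.s. Consequently $\|M_1-\operatorname{sign} M_1\|_{L^p(\Omega)}=0<\eps$, which already gives \eqref{eq:lemcontapproxradem}. (Note that the strict inequality in \eqref{eq:lemcontapproxradem} does not force a genuine approximation: one could equally well use a finite time horizon $T$ large enough that $\mathbb P(\tau>T)<(\eps/2)^p$ and define $M_t:=B_{(Tt)\wedge\tau}$, in which case the estimate $|M_1-\operatorname{sign} M_1|\leq 2\mathbf 1_{\{\tau>T\}}$ yields $\|M_1-\operatorname{sign} M_1\|_{L^p}\leq 2\mathbb P(\tau>T)^{1/p}<\eps$.) The only technical point is the martingale property after the time change, which is handled by optional stopping on the bounded martingale $B^\tau$; there is no real obstacle beyond bookkeeping of the filtration.
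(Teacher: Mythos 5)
Your construction is correct: stopping a Brownian motion at the exit time $\tau$ of $(-1,1)$, using boundedness of $B^\tau$ (hence uniform integrability) and optional sampling for the martingale property, and mapping $[0,\infty)$ onto $[0,1]$ by a deterministic time change does produce a continuous $[-1,1]$-valued martingale with symmetric law, $M_1=B_\tau\in\{-1,1\}$, and therefore $\|M_1-\sign M_1\|_{L^p}=0<\eps$. The paper's route is closely related but avoids both the unbounded stopping time and the time change: it keeps the Wiener process on the fixed horizon $[0,1]$, stops it at $\tau_N=\inf\{t:|W_t|>1/N\}\wedge 1$, and sets $M=NW^{\tau_N}$; since $\tau_n\to 0$ a.s., for $N$ large the event $\{NW^{\tau_N}_1\neq \sign W^{\tau_N}_1\}$ has probability less than $(\eps/2)^p$, which gives \eqref{eq:lemcontapproxradem} with a genuinely nonzero but small error, and the Gaussian law of $W_1$ ensures $\mathbb P(M_1=0)=0$ so that $\sign M_1$ is Rademacher. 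Note that your parenthetical variant (level $1$, large finite horizon $T$, linear reparametrisation to $[0,1]$) is, after Brownian scaling, exactly the paper's construction with $T=N^2$. What your main route buys is the exact identity $M_1=\sign M_1$, at the price of a little extra bookkeeping you should acknowledge: $\tau$ is unbounded, so you need $\tau<\infty$ a.s.\ plus uniform integrability of $B^\tau$ for the closure at $t=1$, and you should check that the time-changed filtration $\mathcal G_t=\mathcal F_{\varphi(t)\wedge\tau}$ is again right-continuous (which holds for a right-continuous $\mathbb F$ since $\mathcal F_\sigma=\bigcap_n\mathcal F_{\sigma_n}$ for stopping times $\sigma_n\downarrow\sigma$); the paper's capped stopping time sidesteps these points entirely.
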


We will need a definition of a Paley-Walsh martingale. 

\begin{definition}[Paley-Walsh martingales]\label{def:Paley-Walsh}
 Let $X$ be a Banach space. A discrete $X$-valued martingale $(f_n)_{n\geq 0}$ is called a {\em Paley-Walsh martingale} if there exist a sequence of independent Rademacher variables $(r_n)_{n\geq 1}$, a function $\phi_n:\{-1, 1\}^{n-1}\to X$ for each $n\geq 2$ and $\phi_1 \in X$ such that $df_n = r_n \phi_n(r_1,\ldots, r_{n-1})$ for each $n\geq 2$ and $df_1 = r_1 \phi_1$.
\end{definition}

\begin{remark}\label{rem:thesamelowerboundford_j}
 Let $X$ be a UMD space, $1<p<\infty$, $\delta>0$. Then using Proposition \ref{prop:UMD01sequence} one can construct a martingale difference sequence $(d_j)^n_{j=1}\in L^p(\Omega; X)$ and a $\{-1,1\}$-valued sequence
$(\varepsilon_j)^n_{j=1}$ such that
\begin{align*}
 \Bigl(\mathbb E \Bigl \| \sum^n_{j=1}\frac {\eps_j\pm1}{2}d_j\Bigr\|^p\Bigr )^{\frac 1p}& \geq \frac{\beta_{p,X}-\delta-1}{2} \Bigl(\mathbb E \Bigl \| \sum^n_{j=1}d_j\Bigr\|^p\Bigr )^{\frac 1p}.
\end{align*}
\end{remark}

\begin{proof}[Proof of Theorem \ref{thm:exampleforlowboundofA}]
   Denote $\frac{\beta_{p,X}-\delta-1}{2}$ by $\gamma_{p,X}^{\delta}$. By Proposition \ref{prop:UMD01sequence} there exists a natural number $N\geq 1$, a~discrete $X$-valued martingale $(f_n)_{n=0}^N$ such that $f_0 =0$, and a sequence of scalars $(\eps_n)_{n=1}^N$ such that $\eps_n\in \{0,1\}$ for each $n=1,\ldots, N$, such that
 \begin{equation}\label{eq:peoofofexampleforlowboundofA}
   \Bigl(\mathbb E \Bigl\| \sum^N_{n=1} \varepsilon_n df_n\Bigr\|^p\Bigr )^{\frac 1p}
\geq \gamma_{p,X}^{\delta} (\mathbb E \|f_N\|^p)^{\frac 1p}.
 \end{equation}
 According to \cite[Theorem 3.6.1]{HNVW1} we can assume that $(f_n)_{n=0}^N$ is a Paley-Walsh martingale. Let $(r_n)_{n=1}^N$ be a sequence of Rademacher variables and $(\phi_n)_{n=1}^N$ be a sequence of functions as in Definition \ref{def:Paley-Walsh}, i.e.\ be such that $f_n = \sum_{k=2}^n r_k\phi_k(r_1,\ldots,r_{k-1}) + r_1\phi_1$ for each $n=1, \ldots,N$. Without loss of generality we assume that
 \begin{equation}\label{eq:analogueofpeoofofexampleforlowboundofAprivetvsem}
  (\mathbb E\|f_{N}\|^p)^{\frac 1p} \geq 2.
 \end{equation}
For each $n=1, \ldots,N$ define a continuous martingale $M^n:[0,1]\times \Omega \to [-1,1]$ as in Lemma \ref{lem:contapproxradem}, i.e.\ a martingale $M^n$ with a symmetric distribution such that $\sign M^n_1$ is a Rademacher variable and
 \begin{equation}\label{eq:M^n_1-signM_n^1}
  \|M^n_1-\sign M^n_1\|_{L^p(\Omega)}<\frac{\delta}{KL},
 \end{equation}
 where $K =\beta_{p, X}N \max\{\|\phi_1\|, \|\phi_2\|_{\infty},\ldots, \|\phi_N\|_{\infty}\}$, and $L =2\beta_{p, X}$. Without loss of generality suppose that $(M^n)_{n=1}^N$ are independent. For each $n=1,\ldots, N$ set $\sigma_n = \sign M^n_1$. Define a martingale $M:[0, N+1]\times \Omega \to X$ in the following way:
 \[
M_t =
\begin{cases}
0, &\text{if}\;\; 0\leq t<1;\\
 M_{n-}+ M^n_{t-n}\phi_{n}(\sigma_1,\ldots,\sigma_{n-1}),&\text{if}\;\;  t\in[n,n+1)\;\; \text{and}\;\; \eps_n=0;\\
  M_{n-}+ \sigma_n \phi_{n}(\sigma_1,\ldots,\sigma_{n-1}),&\text{if}\;\; t\in[n,n+1)\;\; \text{and}\;\; \eps_n=1.
\end{cases}  
 \]
Let $M = M^d + M^c$ be the decomposition of Theorem \ref{thm:Meyer-Yoeurp}. Then 
\begin{align*}
 M^c_{N+1} &= \sum_{n=1}^NM^n_1\phi_{n}(\sigma_1,\ldots,\sigma_{n-1})\mathbf 1_{\eps_n=0},\\
 M^d_{N+1} &= \sum_{n=1}^N\sigma_n \phi_{n}(\sigma_1,\ldots,\sigma_{n-1})\mathbf 1_{\eps_n=1} =\sum_{n=1}^N\eps_n\sigma_n \phi_{n}(\sigma_1,\ldots,\sigma_{n-1}).
\end{align*}
 Notice that $(\sigma_n)_{n=1}^N$ is a sequence of independent Rademacher variables, so by~\eqref{eq:peoofofexampleforlowboundofA} and the discussion thereafter
\begin{equation}\label{eq:analogueofpeoofofexampleforlowboundofA}
  \Bigl(\mathbb E \Bigl\| \sum^N_{n=1} \varepsilon_n \,\sigma_n \phi_{n}(\sigma_1,\ldots,\sigma_{n-1})\Bigr\|^p\Bigr)^{\frac 1p}
\geq \gamma_{p,X}^{\delta}\Bigl(\mathbb E \Bigl\| \sum^N_{n=1} \sigma_n \phi_{n}(\sigma_1,\ldots,\sigma_{n-1})\Bigr\|^p\Bigr)^{\frac 1p}.
\end{equation}

Let us first show \eqref{eq:exampleforlowboundofA} with $i=d$. Note that by the triangle inequality, \eqref{eq:analogueofpeoofofexampleforlowboundofAprivetvsem}~and~\eqref{eq:M^n_1-signM_n^1}
\begin{equation}\label{eq:analogueofpeoofofexampleforlowboundofA12121}
 \begin{split}
  (\mathbb E\|M_{N+1}\|^p)^{\frac 1p} &\geq (\mathbb E\|f_{N}\|^p)^{\frac 1p} - \sum^N_{n=1} \Bigl(\mathbb E \Bigl\| (M^n_1-\sigma_n) \phi_{n}(\sigma_1,\ldots,\sigma_{n-1})\Bigr\|^p\Bigr)^{\frac 1p}\\
  &\geq 2-\frac{\delta}{KL}\cdot N \cdot\max\{\|\phi_1\|, \|\phi_2\|_{\infty},\ldots, \|\phi_N\|_{\infty}\} >1.
 \end{split}
\end{equation}
Therefore,
\begin{align*}
 (\mathbb E\|M^d_{N+1}\|^p)^{\frac 1p} 
  &= \Bigl(\mathbb E \Bigl\| \sum^N_{n=1} \eps_n \,\sigma_n \phi_{n}(\sigma_1,\ldots,\sigma_{n-1})\Bigr\|^p\Bigr)^{\frac 1p} \stackrel{(i)}\geq\gamma_{p,X}^{\delta} \Bigl(\mathbb E \Bigl\| \sum^N_{n=1} \sigma_n \phi_{n}(\sigma_1,\ldots,\sigma_{n-1})\Bigr\|^p\Bigr)^{\frac 1p} \\
  &\stackrel{(ii)}\geq\gamma_{p,X}^{\delta} \Bigl(\mathbb E \Bigl\| \sum^N_{n=1}\mathbf 1_{\eps_n = 1} \sigma_n  \phi_{n}(\sigma_1,\ldots,\sigma_{n-1}) + \sum^N_{n=1}\mathbf 1_{\eps_n = 0} M^n_1 \phi_{n}(\sigma_1,\ldots,\sigma_{n-1})
  \Bigr\|^p\Bigr)^{\frac 1p} \\
  &\quad -\gamma_{p,X}^{\delta}\sum^N_{n=1} \Bigl(\mathbb E \Bigl\| (M^n_1-\sigma_n) \phi_{n}(\sigma_1,\ldots,\sigma_{n-1})\Bigr\|^p\Bigr)^{\frac 1p}\\
  &\stackrel{(iii)}\geq\gamma_{p,X}^{\delta}(\mathbb E\|M_{N+1}\|^p)^{\frac 1p} - \frac{\delta}{L}\stackrel{(iv)}\geq\Bigl(\frac{\beta_{p,X}-1}{2}-\delta\Bigr)(\mathbb E\|M_{N+1}\|^p)^{\frac 1p} ,
\end{align*}
where $(i)$ follows from \eqref{eq:analogueofpeoofofexampleforlowboundofA}, $(ii)$ holds by the triangle inequality, $(iii)$ holds by \eqref{eq:M^n_1-signM_n^1}, and $(iv)$ follows from \eqref{eq:analogueofpeoofofexampleforlowboundofA12121}. By the same reason and Remark \ref{rem:thesamelowerboundford_j}, \eqref{eq:exampleforlowboundofA} holds for $i=c$.
\end{proof}

Let $p\in (1,\infty)$. Recall that $\mathcal M_X^p$ is a space of all $X$-valued $L^p$-martingales, $\mathcal M_X^{p, d}, \mathcal M_X^{p, c}\subset \mathcal M_X^{p}$ are its subspaces of purely discontinuous martingales and continuous martingales that start at zero respectively (see Subsection \ref{subsec:prelimmart}, \ref{subsec:prelimcontmart}, and \ref{subsec:prelimpdmart}).

\begin{theorem*}\label{thm:charofUMDbyMeyYoe}
 Let $X$ be a Banach space. Then $X$ is UMD if and only if for some (or, equivalently, for all) $p\in(1,\infty)$, for any probability space $(\Omega, \mathcal F,\mathbb P)$ with any filtration $\mathbb F = (\mathcal F_t)_{t\geq 0}$ that satisfies the usual conditions, $\mathcal M_X^{p} = \mathcal M_X^{p, d} \oplus \mathcal M_X^{p, c}$, and there exist projections $A^d, A^c\in \mathcal L(\mathcal M_X^{p})$ such that $\text{ran } A^d = \mathcal M_X^{p, d}$, $\text{ran } A^c = \mathcal M_X^{p, c}$, and for any $M \in \mathcal M_X^{p}$ the decomposition $M = A^d M + A^cM$
is the Meyer-Yoeurp decomposition from Theorem \ref{thm:Meyer-Yoeurp}. If this is the case, then
\begin{equation}\label{eq:charofUMDbyMeyYoe1}
  \|A^d\|\leq \beta_{p,X}\;\; \text{and}\;\;
  \|A^c\|\leq  \beta_{p,X}.
\end{equation}
Moreover, there exist  $(\Omega, \mathcal F,\mathbb P)$ and $\mathbb F = (\mathcal F_t)_{t\geq 0}$ such that
\begin{equation}\label{eq:charofUMDbyMeyYoe2}
  \|A^d\|,\|A^c\|\geq \frac {\beta_{p, X}-1}{2}\vee 1.
\end{equation}
\end{theorem*}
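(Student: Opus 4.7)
The forward direction I would get as a direct packaging of Theorem~\ref{thm:Meyer-Yoeurp}. Assuming $X$ is UMD and $p \in (1,\infty)$, on any filtered probability space I define $A^d, A^c : \mathcal M_X^p \to \mathcal M_X^p$ by $A^d M := M^d$ and $A^c M := M^c$, where $M = M^d + M^c$ is the unique Meyer-Yoeurp decomposition supplied by Theorem~\ref{thm:Meyer-Yoeurp}. Linearity is immediate from uniqueness: since $\mathcal M_X^{p,d}$ and $\mathcal M_X^{p,c}$ are linear subspaces of $\mathcal M_X^p$ (the first by Definition~\ref{def:purelydiscXvalued} and the real-valued fact, the second trivially), $\alpha M + \beta N$ already has $(\alpha M^d + \beta N^d) + (\alpha M^c + \beta N^c)$ as its Meyer-Yoeurp decomposition. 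The decompositions of elements of $\mathcal M_X^{p,d}$ and $\mathcal M_X^{p,c}$ are trivial, so $A^d$ and $A^c$ are idempotent with the correct ranges, and Lemma~\ref{lemma:contpuredisczero} gives $\mathcal M_X^{p,d} \cap \mathcal M_X^{p,c} = \{0\}$, hence $\mathcal M_X^p = \mathcal M_X^{p,d} \oplus \mathcal M_X^{p,c}$. The norm bound~\eqref{eq:charofUMDbyMeyYoe1} is precisely~\eqref{eq:Meyer-Yoeurp} passed to $t = \infty$ via the defining $L^p$-convergence $M_t \to M_\infty$.

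For the converse I would use Proposition~\ref{prop:UMD01sequence} together with the construction behind Theorem~\ref{thm:exampleforlowboundofA}. Suppose that for some $p$ the decomposition with bounded $A^d, A^c$ exists on every filtered probability space, with $\|A^d\| \leq C < \infty$. Given a martingale difference sequence $(d_j)_{j=1}^N \in L^p(\Omega; X)$ and a $\{0,1\}$-sequence $(\eps_j)$, I reduce via \cite[Theorem 3.6.1]{HNVW1} to the Paley--Walsh form $d_j = r_j \phi_j(r_1, \ldots, r_{j-1})$. On a sufficiently rich space carrying independent Rademachers $(r_n)$ together with independent continuous auxiliary martingales $M^n$ from Lemma~\ref{lem:contapproxradem} (with $\sigma_n := \sign M^n_1$ a Rademacher and $\|M^n_1 - \sigma_n\|_p \leq \eta$), I build the $X$-valued martingale $M$ exactly as in the proof of Theorem~\ref{thm:exampleforlowboundofA}: on $[n, n+1)$ the increment equals $M^n_{\cdot - n} \phi_n(\sigma_1, \ldots, \sigma_{n-1})$ (continuous) when $\eps_n = 0$ and equals the pure jump $\sigma_n \phi_n(\sigma_1, \ldots, \sigma_{n-1})$ at time $n$ when $\eps_n = 1$. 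The Meyer-Yoeurp decomposition isolates precisely the jump increments, giving $M^d_{N+1} = \sum_n \eps_n \sigma_n \phi_n(\sigma_1, \ldots, \sigma_{n-1})$, which has the same distribution as $\sum_j \eps_j d_j$, while $\|M_{N+1} - \sum_j d_j\|_p \to 0$ as $\eta \to 0$. The hypothesis $\|A^d\| \leq C$ then forces $\|\sum_j \eps_j d_j\|_p \leq C \|\sum_j d_j\|_p$, so Proposition~\ref{prop:UMD01sequence} concludes $X$ is UMD with $\beta_{p, X} \leq 2C + 1$.

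Finally, for \eqref{eq:charofUMDbyMeyYoe2} the bound $\|A^d\|, \|A^c\| \geq 1$ is automatic: on a rich enough space both $\mathcal M_X^{p,d}$ and $\mathcal M_X^{p,c}$ are nonzero (a single-jump martingale witnesses the first and $xW$ for $W$ Brownian, $x \neq 0$, witnesses the second), and any nontrivial idempotent has norm at least $1$. For the bound $\frac{\beta_{p,X}-1}{2}$, I use $\beta_{p,X} = \sup\{\beta_{p,X_0} : X_0 \subset X \text{ finite-dimensional}\}$, which follows from the definition by restriction: given $\delta' > 0$ pick $X_0 \subset X$ finite-dimensional with $\beta_{p,X_0} \geq \beta_{p,X} - \delta'$, apply Theorem~\ref{thm:exampleforlowboundofA} inside $X_0$ with parameter $\delta'$ on an appropriate space, and observe that by uniqueness the Meyer-Yoeurp decomposition of the resulting $X_0$-valued martingale in $\mathcal M_X^p$ coincides with the one in $\mathcal M_{X_0}^p$; hence $\|A^d\|_{\mathcal L(\mathcal M_X^p)} \geq \tfrac{\beta_{p,X_0}-1}{2} - \delta'$, and sending $\delta' \to 0$ closes the argument. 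The principal obstacle I anticipate is bookkeeping the approximation errors $\eta$ in the backward construction and verifying that the auxiliary probability space simultaneously supports the Paley--Walsh and the continuous-martingale structures; both tasks are already handled by the explicit constructions in the proofs of Theorem~\ref{thm:exampleforlowboundofA} and Lemma~\ref{lem:contapproxradem}, so no new analytic input is required beyond careful assembly.
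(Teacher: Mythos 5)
Your proposal is correct and follows essentially the same route as the paper's (supplementary) proof: the forward direction and the bound \eqref{eq:charofUMDbyMeyYoe1} are exactly \eqref{eq:Meyer-Yoeurp} from Theorem \ref{thm:Meyer-Yoeurp} packaged as bounded projections, the lower bound \eqref{eq:charofUMDbyMeyYoe2} comes from the construction of Theorem \ref{thm:exampleforlowboundofA} together with the observation that nontrivial projections have norm at least $1$, and the converse is the same $\{0,1\}$-martingale-transform argument via Proposition \ref{prop:UMD01sequence}. You merely spell out bookkeeping the paper leaves implicit (reduction to finite-dimensional subspaces of $X$, and realizing all the constructions on a single sufficiently rich filtered space so that the bound on $A^d$ is applied uniformly), which is consistent with, not different from, the paper's argument.
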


\begin{corollary}\label{cor:M_X^pd^*=M_X^*^p'dM_X^pc^*=M_X^*^p'c}
 Let $X$ be a UMD Banach space, $p\in (1,\infty)$. Let $i\in \{c,d\}$. Then $(\mathcal M_X^{p, i})^* \simeq \mathcal M_{X^*}^{p', i}$, and for each $M\in \mathcal M_{X^*}^{p', i}$ and $N\in \mathcal M_X^{p, i}$
 \begin{equation*}
   \langle M, N\rangle:= \mathbb E\langle M_{\infty}, N_{\infty}\rangle,\;\; \;\;
  \|M\|_{(\mathcal M_X^{p, i})^*}\eqsim_{p, X}\|M\|_{\mathcal M_{X^*}^{p', i}}.
 \end{equation*}
\end{corollary}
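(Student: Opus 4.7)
The plan is to exhibit the natural pairing $\Phi(M)(N) := \mathbb E\langle M_\infty, N_\infty\rangle$ as the desired isomorphism $\Phi: \mathcal M_{X^*}^{p', i} \to (\mathcal M_X^{p, i})^*$. Boundedness with $\|\Phi(M)\| \leq \|M\|_{\mathcal M_{X^*}^{p'}}$ is immediate from H\"older's inequality. Since $X$ is UMD and hence reflexive, Proposition \ref{prop:dualofMXp} yields the global identification $(\mathcal M_X^p)^* \simeq \mathcal M_{X^*}^{p'}$ via the same pairing. Surjectivity of $\Phi$ will then follow by Hahn--Banach extension combined with the bounded projections of Theorem \ref{thm:charofUMDbyMeyYoe} (applied on $X^*$, which is UMD with $\beta_{p', X^*} = \beta_{p, X}$), once a suitable orthogonality lemma is in place.

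The main step is the following \textbf{orthogonality lemma}: for $M \in \mathcal M_{X^*}^{p', c}$ and $N \in \mathcal M_X^{p, d}$ (and for the swapped pair), one has $\mathbb E\langle M_\infty, N_\infty\rangle = 0$. I would prove it by finite-dimensional approximation. By Remark \ref{cor:approxofconandpurediscmartbyfd}, approximate $M$ in $\mathcal M_{X^*}^{p'}$ by continuous martingales $M^n$ each taking values in a finite-dimensional subspace $Y_n \subset X^*$; fix a basis $(x_k^{*, n})_{k=1}^{d_n}$ of $Y_n$ and its dual basis $(y_k^n) \subset X$ (using reflexivity of $X$), so that $M^n = \sum_k f^{n, k} x_k^{*, n}$ with $f^{n, k} := \langle M^n, y_k^n\rangle$ a continuous real martingale starting at zero. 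Since $\langle x_k^{*, n}, N\rangle$ is a purely discontinuous scalar martingale, Proposition \ref{thm:purdiscorthtoanycont1} applied to the stopped martingales $(f^{n, k})^{\tau_j}$, with $\tau_j := \inf\{t : |f^{n, k}_t| > j\}$, shows their product is a martingale, giving $\mathbb E[f^{n, k}_{\tau_j} \langle x_k^{*, n}, N_{\tau_j}\rangle] = 0$. Letting $j \to \infty$ and then $n \to \infty$, using Doob's inequality and H\"older in $L^{p'} \times L^p$ for dominated convergence, yields the lemma.

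Given this, I conclude as follows. Let $\phi \in (\mathcal M_X^{p, i})^*$; extend by Hahn--Banach to $\tilde\phi \in (\mathcal M_X^p)^*$ with $\|\tilde\phi\| = \|\phi\|$, and let $\tilde M \in \mathcal M_{X^*}^{p'}$ represent $\tilde\phi$ via Proposition \ref{prop:dualofMXp}. Applying Theorem \ref{thm:charofUMDbyMeyYoe} on $X^*$, decompose $\tilde M = B^d \tilde M + B^c \tilde M$ and set $M := B^i \tilde M \in \mathcal M_{X^*}^{p', i}$, so that $\|M\|_{\mathcal M_{X^*}^{p'}} \leq \beta_{p, X} \|\phi\|$. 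For any $N \in \mathcal M_X^{p, i}$ the orthogonality lemma kills the cross term in
\[
\phi(N) = \tilde\phi(N) = \mathbb E\langle B^d \tilde M_\infty, N_\infty\rangle + \mathbb E\langle B^c \tilde M_\infty, N_\infty\rangle,
\]
leaving $\phi(N) = \mathbb E\langle M_\infty, N_\infty\rangle = \Phi(M)(N)$. This proves surjectivity, and the two-sided bound $\|\phi\| \leq \|M\|_{\mathcal M_{X^*}^{p'}} \leq \beta_{p, X}\|\phi\|$ gives the norm equivalence. The main obstacle is the orthogonality lemma; everything else is routine Hahn--Banach/projection bookkeeping built on results already in hand.
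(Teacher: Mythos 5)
Your proposal is correct and follows essentially the same route as the paper: the same H\"older bound for the pairing, the same orthogonality lemma (continuous versus purely discontinuous parts) proved via the finite-dimensional approximation of Remark \ref{cor:approxofconandpurediscmartbyfd} together with Proposition \ref{thm:purdiscorthtoanycont1}, and the same Hahn--Banach extension followed by the Meyer--Yoeurp decomposition of the representing martingale in $\mathcal M_{X^*}^{p'}$. The only differences are cosmetic: you localize with stopping times to apply Proposition \ref{thm:purdiscorthtoanycont1} (a detail the paper leaves implicit) and you phrase the final decomposition through the projections of Theorem \ref{thm:charofUMDbyMeyYoe} rather than quoting Theorem \ref{thm:Meyer-Yoeurp} and \eqref{eq:Meyer-Yoeurp} directly, which is equivalent.
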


To prove the corollary above we will need the following lemma.

\begin{lemma}
 Let $X$ be a UMD Banach space, $p\in (1,\infty)$, $M \in \mathcal M_X^{p, d}$, $N \in \mathcal M_{X^*}^{p', c}$. Then $\mathbb E\langle M_{\infty}, N_{\infty}\rangle = 0$.
\end{lemma}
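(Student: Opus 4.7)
The plan is to reduce to the scalar case via finite-dimensional approximation, and then establish the scalar orthogonality $\mathbb E m_\infty n_\infty = 0$ for $m\in \mathcal M_{\mathbb R}^{p,d}$ and $n \in \mathcal M_{\mathbb R}^{p',c}$ using Proposition \ref{thm:purdiscorthtoanycont1} together with a localization argument.

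First, using Remark \ref{cor:approxofconandpurediscmartbyfd} I would pick a sequence $(M^n)_{n\geq 1}$ of purely discontinuous $X$-valued $L^p$-martingales, each taking values in a finite-dimensional subspace $X_n \subset X$, such that $M^n_\infty \to M_\infty$ in $L^p(\Omega;X)$. For each fixed $n$, choose a basis $(y_i)_{i=1}^{d_n}$ of $X_n$ with dual basis $(y_i^*)_{i=1}^{d_n} \subset X_n^*$, and extend each $y_i^*$ to an element of $X^*$ by Hahn--Banach (abusing notation). Then I can write
\[
 M^n_t = \sum_{i=1}^{d_n} \langle M^n_t, y_i^*\rangle\, y_i,\qquad \langle M^n_\infty, N_\infty\rangle = \sum_{i=1}^{d_n} \langle M^n_\infty, y_i^*\rangle\,\langle y_i, N_\infty\rangle.
\]
The scalar martingale $m^n_i := \langle M^n, y_i^*\rangle$ is purely discontinuous (by Definition \ref{def:purelydiscXvalued}) and lies in $\mathcal M_{\mathbb R}^p$, while $\tilde N_i := \langle y_i, N\rangle$ is a continuous scalar $L^{p'}$-martingale with $\tilde N_i(0)=0$.

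The core step is then to prove the scalar orthogonality: if $m\in \mathcal M_{\mathbb R}^{p,d}$ and $n\in \mathcal M_{\mathbb R}^{p',c}$ with $n_0=0$, then $\mathbb E m_\infty n_\infty = 0$. For this I would introduce the stopping times $\tau_k := \inf\{t\geq 0: |n_t|\geq k\}$; since $n$ is continuous, $|n^{\tau_k}|\leq k$, so $n^{\tau_k}$ is a bounded continuous martingale with $(n^{\tau_k})_0=0$. By Proposition \ref{thm:purdiscorthtoanycont1}, $m\, n^{\tau_k}$ is a martingale, hence $\mathbb E m_t n^{\tau_k}_t = 0$ for every $t\geq 0$. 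By Doob's maximal inequality $\sup_{s\leq t}|n_s|\in L^{p'}$, so by dominated convergence $n^{\tau_k}_t \to n_t$ in $L^{p'}(\Omega)$ as $k\to\infty$, and by H\"older $m_t n^{\tau_k}_t \to m_t n_t$ in $L^1(\Omega)$. Thus $\mathbb E m_t n_t = 0$ for each $t\geq 0$. Letting $t\to\infty$, the $L^p$-convergence $m_t\to m_\infty$ and $L^{p'}$-convergence $n_t \to n_\infty$ combine via H\"older to give $m_t n_t \to m_\infty n_\infty$ in $L^1(\Omega)$, yielding $\mathbb E m_\infty n_\infty = 0$.

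Applying this scalar result to each pair $(m^n_i, \tilde N_i)$ gives $\mathbb E\langle M^n_\infty, N_\infty\rangle = 0$ for every $n\geq 1$. Finally, by H\"older's inequality,
\[
 \bigl|\mathbb E\langle M_\infty, N_\infty\rangle\bigr| = \bigl|\mathbb E\langle M_\infty - M^n_\infty, N_\infty\rangle\bigr| \leq \|M_\infty - M^n_\infty\|_{L^p(\Omega;X)}\,\|N_\infty\|_{L^{p'}(\Omega;X^*)} \longrightarrow 0,
\]
as $n\to\infty$, which proves the lemma. The main obstacle is the scalar step, where Proposition \ref{thm:purdiscorthtoanycont1} is only stated for a bounded continuous test martingale, so the localization-plus-$L^{p'}$-uniform-integrability argument is needed to upgrade orthogonality to the unbounded $L^{p'}$ setting; the reduction to finite dimensions and the coordinate expansion are then straightforward UMD-approximation considerations.
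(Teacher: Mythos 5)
Your proof is correct and follows essentially the same route as the paper's: reduce to a finite-dimensional situation via Remark \ref{cor:approxofconandpurediscmartbyfd}, expand in a basis to reduce to scalar martingales, and conclude with the orthogonality of purely discontinuous and continuous scalar martingales coming from Proposition \ref{thm:purdiscorthtoanycont1}. The only differences are minor: the paper approximates $N$ by finite-dimensionally valued continuous martingales in $X^*$ (rather than approximating $M$ in $X$ as you do), and your stopping-time plus H\"older/uniform-integrability step spells out the passage from bounded continuous test martingales to general $L^{p'}$ ones, a detail the paper leaves implicit when it cites Proposition \ref{thm:purdiscorthtoanycont1}.
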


\begin{proof}
First suppose that $N_{\infty}$ takes it values in a finite dimensional subspace $Y$ of $X^*$. Let $d\geq 1$ be the dimension of $Y$, $(y_k)_{k=1}^d$ be the basis of $Y$. Then there exist $N^1,\ldots,N^d \in \mathcal M_{\mathbb R}^{p', c}$ such that $N = \sum_{k=1}^d N^k y_k$. Hence
\begin{equation}\label{eq:EM_inftyN_infty=0}
 \begin{split}
  E\langle M_{\infty}, N_{\infty}\rangle  =  E\Bigl\langle M_{\infty}, \sum_{k=1}^d N^k_{\infty} y_k\Bigr\rangle = \sum_{k=1}^d\mathbb E\langle M_{\infty}, y_k\rangle N^k_{\infty} \stackrel{(*)}= 0,
 \end{split}
\end{equation}
where $(*)$ holds due to Proposition \ref{thm:purdiscorthtoanycont1}.

Now turn to the general case. By Remark \ref{cor:approxofconandpurediscmartbyfd} for each $N \in \mathcal M_{X^*}^{p', c}$ there exists a se\-qu\-ence $(N^n)_{n\geq 1}$ of continuous martingales such that each of $N^n$ is in $\mathcal M_{X^*}^{p', c}$ and takes its valued in a finite dimensional subspace of $X^*$, and $N^n_{\infty} \to N_{\infty}$ in $L^{p'}(\Omega; X^*)$ as $n\to \infty$. Then due to \eqref{eq:EM_inftyN_infty=0},
  $ E\langle M_{\infty}, N_{\infty}\rangle = \lim_{n\to \infty}  E\langle M_{\infty}, N^n_{\infty}\rangle =0$,
so the lemma holds.
\end{proof}

\begin{proof}[Proof of Corollary \ref{cor:M_X^pd^*=M_X^*^p'dM_X^pc^*=M_X^*^p'c}]
 We will show only the case $i=d$, the case $i=c$ can be shown analogously.
 
 $\mathcal M_{X^*}^{p', d}\subset (\mathcal M_X^{p, d})^*$ and $\|M\|_{(\mathcal M_X^{p, d})^*} \leq \|M\|_{\mathcal M_{X^*}^{p', d}}$ for each $M \in \mathcal M_{X^*}^{p', d}$ thanks to the H\"older inequality. Now let us show the inverse. Let $f\in (\mathcal M_X^{p, d})^*$. Since due to Proposition \ref{thm:M^p,discomplete} $\mathcal M_X^{p, d}$ is a closed subspace of $\mathcal M_X^{p}$, by the Hahn-Banach theorem and Proposition \ref{prop:dualofMXp} there exists $L \in \mathcal M_{X^*}^{p'}$ such that
  $\mathbb E\langle L_{\infty}, N_{\infty}\rangle = f(N)$ for any $N \in \mathcal M_X^{p, d}$,
and $\|L\|_{\mathcal M_{X^*}^{p'}} = \|f\|_{ (\mathcal M_X^{p, d})^*}$. Let $L = L^d + L^c$ be the Meyer-Yoeurp decomposition of $L$ as in Theorem \ref{thm:Meyer-Yoeurp}. Then by \eqref{eq:Meyer-Yoeurp}
$$
\|L^d\|_{\mathcal M_{X^*}^{p', d}}\lesssim_{p, X}\|L\|_{\mathcal M_{X^*}^{p'}} = \|f\|_{ (\mathcal M_X^{p, d})^*}
$$ 
and
 $ \mathbb E\langle L^d_{\infty}, N_{\infty}\rangle =  \mathbb E\langle L_{\infty}, N_{\infty}\rangle$,
so the theorem holds.
\end{proof}

\subsection{Yoeurp decomposition of purely discontinuous martingales}\label{subsec:Youdec}
As Yoeurp shown in \cite{Yoe76}, one can provide further decomposition of a purely discontinuous martingale into two parts: a martingale with accessible jumps and a quasi-left continuous martingale. This subsection is devoted to the generalization of this result to a UMD case.

\begin{definition}
 Let $\tau$ be a stopping time. Then $\tau$ is called a {\em predictable stopping time} if there exists a sequence of stopping times $(\tau_n)_{n\geq 1}$ such that $\tau_n<\tau$ a.s.\ on $\{\tau>0\}$ for each $n\geq 1$ and $\tau_n \nearrow\tau$ a.s.
\end{definition}

\begin{definition}
 Let $\tau$ be a stopping time. Then $\tau$ is called a {\em totally inaccessible stopping time} if $\mathbb P\{\tau = \sigma < \infty\} =0$ for each predictable stopping time $\sigma$.
\end{definition}

\begin{definition}\label{def:accjumpsqlc}
 Let $A:\mathbb R_+ \times \Omega \to \mathbb R$ be an adapted c\`adl\`ag process. $A$ has {\em accessible jumps} if $\Delta A_{\tau}=0$ a.s.\ for any totally inaccessible stopping time $\tau$. $A$ is called {\em quasi-left continuous} if $\Delta A_{\tau}=0$ a.s.\ for any predictable stopping time $\tau$. 
\end{definition}

For the further information on the definitions given we refer the reader to \cite{Kal}.

\begin{remark}\label{rem:incprocdecom}
 According to \cite[Proposition 25.17]{Kal} one can show that for any pure jump increasing adapted c\`adl\`ag process $A:\mathbb R_+ \times \Omega \to \mathbb R$ there exist unique increasing adapted c\`adl\`ag processes $A^a, A^q:\mathbb R_+ \times \Omega \to \mathbb R$ such that $A^a$ has accessible jumps, $A^q$ is quasi-left continuous, $A^q_0 =0$ and $A=A^a + A^q$.
\end{remark}

The following decomposition theorem was shown by Yoeurp in \cite{Yoe76} (see also \cite[Corollary 26.16]{Kal}):
\begin{theorem}\label{thm:Yoeurpdec}
 Let $M:\mathbb R_+ \times \Omega \to \mathbb R$ be a purely discontinuous martingale. Then there exist unique purely discontinuous martingales $M^a,M^q:\mathbb R_+ \times \Omega \to \mathbb R$ such that $M^a$ is has accessible jumps, $M^q$ is quasi-left continuous, $M^q_0=0$ and $M = M^a + M^q$. Moreover, then $[M^a] = [M]^a$ and $[M^q] = [M]^q$.
\end{theorem}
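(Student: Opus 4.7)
The plan is to build the decomposition from the known decomposition of the increasing process $[M]$, using a section-theorem enumeration of its predictable jumps. First, apply Remark \ref{rem:incprocdecom} to the pure-jump increasing process $[M]$ to obtain $[M] = [M]^a + [M]^q$. By a standard localization with stopping times $\sigma_n\uparrow\infty$ making each $M^{\sigma_n}$ bounded, I may reduce to the case $M\in\mathcal M_{\R}^{2}$ and construct the decomposition on every $[0,\sigma_n]$ coherently.

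Next, invoke Meyer's section theorem to obtain predictable stopping times $(T_n)_{n\ge 1}$ with pairwise disjoint graphs whose union exhausts the jump times of $[M]^a$. For each $n$, predictability of $T_n$ together with optional sampling yields $\E[\Delta M_{T_n}\mid \mathcal F_{T_n-}]=0$, so that $t\mapsto \Delta M_{T_n}\,\mathbf{1}_{T_n\le t}$ is an $L^2$ martingale, and these single-jump martingales are pairwise strongly orthogonal since their jumps occur at disjoint times. Define
\[
M^a_t := \sum_{n\ge 1}\Delta M_{T_n}\,\mathbf{1}_{T_n\le t},
\]
which converges in $\mathcal M_{\R}^{2}$ because $\sum_n\E(\Delta M_{T_n})^2 = \E[M]^a_\infty \le \E[M]_\infty<\infty$, and put $M^q := M - M^a$. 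Both summands are purely discontinuous, and since the jumps of $M^a$ form a subset of those of $M$ one has $[M,M^a]=[M^a]=[M]^a$, so that $[M^q] = [M]-2[M,M^a]+[M^a] = [M]^q$. Accessibility of the jumps of $M^a$ is then immediate from the construction, while quasi-left continuity of $M^q$ follows from that of $[M^q]=[M]^q$, since any jump $\Delta M^q_\tau\neq 0$ would force $\Delta[M^q]_\tau>0$.

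For uniqueness, if $M=\widetilde M^a+\widetilde M^q$ is any other such decomposition, then $N := M^a - \widetilde M^a = \widetilde M^q - M^q$ is a purely discontinuous martingale which is simultaneously quasi-left continuous and with accessible jumps. By Definition \ref{def:accjumpsqlc}, $\Delta N_\tau = 0$ both at every predictable and every totally inaccessible stopping time $\tau$; since every stopping time decomposes into a predictable and a totally inaccessible part, $N$ has no jumps and is therefore continuous. Corollary \ref{cor:cont=purdiscR} then forces $N\equiv N_0$, and the normalization $M^q_0=\widetilde M^q_0=0$ yields $N_0=0$. The main obstacle in this plan is coordinating the section-theorem enumeration with the localization: one must verify that the enumeration chosen on $[0,\sigma_m]$ and the resulting partial sums glue consistently as $m\to\infty$, so that the definition of $M^a$ on $\R_+$ is unambiguous and yields a genuine martingale in the unbounded case.
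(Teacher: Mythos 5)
Your architecture is the classical one: the paper itself gives no proof of this theorem (it quotes Yoeurp and \cite[Corollary 26.16]{Kal}), and your plan --- take $[M]=[M]^a+[M]^q$ from Remark \ref{rem:incprocdecom}, exhaust the accessible jumps by predictable times $(T_n)$ with disjoint graphs, use $\mathbb E[\Delta M_{T_n}\mid\mathcal F_{T_n-}]=0$ to make each single-jump process a martingale, sum orthogonally, and read off $[M^a]=[M]^a$, $[M^q]=[M]^q$ --- is exactly how the cited sources argue in the square-integrable case. The quadratic-variation computation, the deduction of quasi-left continuity of $M^q$ from $[M^q]=[M]^q$, and the uniqueness step are sound; in particular you correctly avoid circularity by proving directly that a purely discontinuous martingale which is both quasi-left continuous and with accessible jumps has no jumps and then invoking Corollary \ref{cor:cont=purdiscR}. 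Two small repairs: your $M^a$ has $M^a_0=0$, so to meet the normalization $M^q_0=0$ you should set $M^a_t:=M_0+\sum_n\Delta M_{T_n}\mathbf 1_{T_n\le t}$; and in the uniqueness argument a stopping time splits into an \emph{accessible} and a totally inaccessible part (the accessible graph being covered by countably many predictable graphs), not a ``predictable'' part --- the conclusion is unaffected. Your worry about gluing the enumerations across the localization is also not the real issue: consistency follows from the uniqueness of the stopped decompositions, since stopping preserves all the relevant properties.

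The genuine gap is the opening reduction. There are no stopping times $\sigma_n\uparrow\infty$ ``making each $M^{\sigma_n}$ bounded'' for a c\`adl\`ag martingale (the jump at $\sigma_n$ cannot be controlled), and, more seriously, a martingale need not be locally square-integrable at all: if $\xi\in L^1\setminus L^2$ has mean zero, then $M_t=\xi\mathbf 1_{t\ge 1}$ is a purely discontinuous uniformly integrable martingale with $\mathbb E\bigl[(\Delta M_1)^2\mathbf 1_{\sigma\ge 1}\bigr]=\infty$ for every stopping time $\sigma$ with $\mathbb P(\sigma\ge1)>0$, so no localization places you in $\mathcal M^2_{\mathbb R}$, which is where your entire series construction (orthogonality, Pythagoras, $\sum_n\mathbb E(\Delta M_{T_n})^2\le\mathbb E[M]_\infty$) lives. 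To prove the theorem as stated you must either first split $M$, as in the classical proofs, into a locally square-integrable local martingale plus a local martingale of locally integrable variation, whose accessible jump part is handled by compensation at the predictable times; or, for the $L^p$-martingales ($1<p<\infty$) that this paper actually feeds into the theorem, run your series directly in $\mathcal M^p_{\mathbb R}$: the partial sums are Cauchy by the Burkholder--Davis--Gundy inequalities together with dominated convergence, since $\sum_n(\Delta M_{T_n})^2=[M]^a_\infty\le[M]_\infty$ and $[M]_\infty^{1/2}\in L^p(\Omega)$. Be aware also that after localizing and gluing one only obtains \emph{local} martingales $M^a,M^q$ in general, which is the form of the result in \cite[Corollary 26.16]{Kal}; upgrading to genuine martingales in the generality of the literal statement requires an additional integrability argument and is not automatic.
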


\begin{corollary}\label{cor:M=M_0ifMbothaccjandqlc}
 Let $M:\mathbb R_+ \times \Omega \to \mathbb R$ be a purely discontinuous martingale which is both with accessible jumps and quasi-left continuous. Then $M = M_0$ a.s.
\end{corollary}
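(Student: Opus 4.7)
The plan is to exploit the uniqueness part of Yoeurp's decomposition (Theorem \ref{thm:Yoeurpdec}) by writing $M$ in two different ways, both of which formally satisfy the hypotheses of that theorem.

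First, since $M$ is itself purely discontinuous and has accessible jumps, I would observe that the trivial splitting
\[
 M = M + 0
\]
is a valid Yoeurp decomposition: here the ``accessible jumps'' part is $M$ itself, and the ``quasi-left continuous'' part is the zero martingale, which starts at $0$ and is trivially quasi-left continuous. Second, since $M$ is purely discontinuous and quasi-left continuous, I would consider the splitting
\[
 M = M_0 + (M-M_0),
\]
where the constant process $M_0$ has accessible jumps (its jumps vanish identically, so in particular on every totally inaccessible stopping time), is a purely discontinuous martingale with $[M_0]=0$, while $M-M_0$ is a purely discontinuous quasi-left continuous martingale with $(M-M_0)_0 = 0$. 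Thus this is also a valid Yoeurp decomposition.

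By the uniqueness asserted in Theorem \ref{thm:Yoeurpdec}, the two ``accessible jump'' parts must agree and the two ``quasi-left continuous'' parts must agree, giving $M = M_0$ and $M - M_0 = 0$ a.s. Either identity yields the conclusion.

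I do not anticipate a real obstacle: the argument is essentially a uniqueness check, and the only things to verify are that the zero martingale and the constant martingale $M_0$ are purely discontinuous (immediate from $[0] = [M_0] = 0$, which is trivially pure jump) and that $M - M_0$ inherits pure discontinuity from $M$ and $M_0$ (which follows from the linearity of the definition of pure discontinuity of scalar martingales via their quadratic variation, cf.\ Remark \ref{rem:YoeMeyR}). This mirrors exactly the argument used to prove Corollary \ref{cor:cont=purdiscR} from Remark \ref{rem:YoeMeyR}.
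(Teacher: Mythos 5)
Your proof is correct and is essentially the paper's own argument: both rest on the uniqueness statement in Theorem \ref{thm:Yoeurpdec} applied to two trivial Yoeurp decompositions of $M$; the paper merely normalizes $M_0=0$ first and compares $M=M+0=0+M$, whereas you avoid the normalization by comparing $M+0$ with $M_0+(M-M_0)$.
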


 \begin{proof}
 Without loss of generality we can set $ M_0=0$. Then $M = M+ 0 = 0+M$ are decompositions of $M$ into a sum of a martingale with accessible jumps and a quasi-left continuous martingale. Since by Theorem \ref{thm:Yoeurpdec} this decomposition is unique, $M=0$ a.s.
\end{proof}

\begin{proposition*}\label{prop:limthmforaccjqlc}
 Let $1<p<\infty$, $M:\mathbb R_+ \times \Omega \to \mathbb R$ be a purely discontinuous \mbox{$L^p$-mar}\-tin\-gale. Let $(M^n)_{n\geq 1}$ be a sequence of purely discontinuous martingales such that $M^n_{\infty} \to M_{\infty}$ in $L^p(\Omega)$. Then the following assertions hold
 \begin{itemize}
  \item [(a)] if $(M^n)_{n\geq 1}$ have accessible jumps, then $M$ has accessible jumps as well;
    \item [(b)] if $(M^n)_{n\geq 1}$ are quasi-left continuous martingales, then $M$ is quasi-left continuous as well.
 \end{itemize}
\end{proposition*}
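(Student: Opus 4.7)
The plan is to reduce both parts to an $L^p$-continuity property of the Yoeurp projections on the space of real-valued purely discontinuous $L^p$-martingales, and then to pass to the limit using the uniqueness in Theorem \ref{thm:Yoeurpdec}. First I would apply Theorem \ref{thm:Yoeurpdec} to $M$ to obtain $M = M^a + M^q$ with $M^a$ purely discontinuous with accessible jumps, $M^q$ purely discontinuous quasi-left continuous and $M^q_0=0$; uniqueness of the decomposition makes the assignments $N\mapsto N^a$ and $N\mapsto N^q$ linear on $\mathcal M_{\mathbb R}^{p,d}$.

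The key step is to show that these two assignments are bounded operators on $\mathcal M_{\mathbb R}^{p,d}$ with a constant depending only on $p$. By Theorem \ref{thm:Yoeurpdec} one has $[N^a]=[N]^a$ and $[N^q]=[N]^q$, while Remark \ref{rem:incprocdecom} guarantees that $[N]^a$ and $[N]^q$ are nonnegative, nondecreasing and satisfy $[N]^a + [N]^q = [N]$. Consequently $[N^a]_\infty, [N^q]_\infty \leq [N]_\infty$ a.s., and the real-valued Burkholder-Davis-Gundy inequality yields
\[
 \|N^a_\infty\|_{L^p(\Omega)} \eqsim_p \|[N^a]_\infty^{1/2}\|_{L^p(\Omega)} \leq \|[N]_\infty^{1/2}\|_{L^p(\Omega)} \eqsim_p \|N_\infty\|_{L^p(\Omega)},
\]
with the analogous estimate for $N^q$.

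With this bound in hand, part (a) is immediate: since $(M^n)^q \equiv 0$ for every $n$, linearity gives $(M-M^n)^q = M^q$, hence
\[
 \|M^q_\infty\|_{L^p(\Omega)} = \|(M-M^n)^q_\infty\|_{L^p(\Omega)} \lesssim_p \|M_\infty - M^n_\infty\|_{L^p(\Omega)} \xrightarrow{n\to\infty} 0,
\]
so $M^q_\infty = 0$ a.s., and since $M^q$ is a martingale with $M^q_0=0$, this forces $M^q\equiv 0$ and $M = M^a$ has accessible jumps. Part (b) follows by the symmetric argument, swapping the roles of $a$ and $q$. The only nontrivial ingredient is the $L^p$-boundedness of the Yoeurp projections; this rests on the quadratic-variation identities in Theorem \ref{thm:Yoeurpdec} together with the domination $[N]^a, [N]^q \leq [N]$ from Remark \ref{rem:incprocdecom}, while everything else is a soft consequence of linearity and uniqueness of the Yoeurp decomposition in the real-valued setting.
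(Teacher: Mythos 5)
Your argument is correct and follows essentially the same route as the paper: both rest on the real-valued Yoeurp decomposition with $[N^a]=[N]^a$, $[N^q]=[N]^q$, and the Burkholder--Davis--Gundy inequality, applied to $M-M^n$ (whose quasi-left continuous part is $M^q$ by uniqueness/linearity) to force $[M^q]_\infty=0$. Your packaging of this as $L^p$-boundedness of the Yoeurp projections plus linearity is just a mild rephrasing of the paper's direct estimate $\mathbb E\bigl|M_\infty-M^n_\infty\bigr|^p\eqsim_p\mathbb E\bigl([M^a-M^n]_\infty+[M^q]_\infty\bigr)^{p/2}\geq\mathbb E[M^q]_\infty^{p/2}$.
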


\begin{definition}
 Let $X$ be a Banach space. A martingale $M:\mathbb R_+ \times \Omega \to X$ has {\em accessible jumps} if $\Delta M_{\tau}=0$ a.s.\ for any totally inaccessible stopping time $\tau$. A~martingale $M:\mathbb R_+ \times \Omega \to X$ is called {\em quasi-left continuous} if $\Delta M_{\tau}=0$ a.s.\ for any predictable stopping time $\tau$.
\end{definition}

\begin{lemma*}\label{lem:weakdefforaccjumpsandqlc}
 Let $X$ be a reflexive Banach space, $M:\mathbb R_+ \times \Omega \to X$ be a purely discontinuous martingale.
 \begin{itemize}
  \item [(i)] $M$ has accessible jumps if and only if for each $x^* \in X^*$ the martingale $\langle M, x^*\rangle$ has accessible jumps;
   \item [(ii)] $M$ is quasi-left continuous if and only if for each $x^* \in X^*$ the martingale $\langle M, x^*\rangle$ is quasi-left continuous.
 \end{itemize}
\end{lemma*}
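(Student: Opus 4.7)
The forward direction $(\Rightarrow)$ in both (i) and (ii) is immediate: for any stopping time $\tau$ one has $\Delta\langle M,x^*\rangle_\tau=\langle \Delta M_\tau,x^*\rangle$, so $\Delta M_\tau=0$ a.s.\ forces $\Delta\langle M,x^*\rangle_\tau=0$ a.s.\ for every $x^*\in X^*$. Thus the content of the lemma lies in the converse, where the exceptional null set in the hypothesis depends on $x^*$ and one must exchange a ``for every $x^*$'' with an ``a.s.''.

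The plan is to reduce to a separable situation where a countable dense family of functionals suffices. Since $M$ has c\`adl\`ag strongly measurable paths, for a.e.\ $\omega$ the set $\{M_t(\omega),\,M_{t-}(\omega):t\geq 0\}$ is the closure of its values on the rationals, hence separable. Using Pettis measurability and a routine exhaustion argument one produces a closed separable subspace $X_0\subseteq X$ such that a.s.\ $M_t\in X_0$ for all $t\geq 0$, and consequently $\Delta M_\tau\in X_0$ a.s.\ for every stopping time $\tau$. Because $X$ is reflexive, $X_0$ is reflexive, and a separable reflexive space has separable dual; let $(y_n^*)_{n\geq 1}$ be dense in $X_0^*$ and use Hahn--Banach to extend each $y_n^*$ to $x_n^*\in X^*$ with $\langle z,x_n^*\rangle=\langle z,y_n^*\rangle$ for all $z\in X_0$.

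To prove (i)$(\Leftarrow)$, fix a totally inaccessible stopping time $\tau$. By the hypothesis $\langle M,x_n^*\rangle$ has accessible jumps, so $\langle\Delta M_\tau,x_n^*\rangle=0$ a.s.\ for each $n$. Intersecting the countably many null sets gives a full-measure event on which $\langle \Delta M_\tau,y_n^*\rangle=0$ for every $n$, and density of $(y_n^*)$ in $X_0^*$ together with $\Delta M_\tau\in X_0$ forces $\Delta M_\tau=0$ on that event. Part (ii)$(\Leftarrow)$ is established verbatim, replacing ``totally inaccessible'' by ``predictable.''

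The only genuine obstacle is the quantifier exchange in the converse, which is handled by the separable-dual argument above; the hypothesis that $M$ is purely discontinuous is not needed for the proof of the lemma itself (it is only the setting of the subsection), and reflexivity is used solely to pass from separability of $X_0$ to separability of $X_0^*$.
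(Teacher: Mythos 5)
Your proposal is correct and follows essentially the same route as the paper: the paper also reduces without loss of generality to a separable $X$ (so that reflexivity yields a separable dual), fixes a countable dense family $(x_m^*)_{m\geq 1}$ in $X^*$, applies the scalar hypothesis to each $x_m^*$, and intersects the null sets to conclude $\Delta M_\tau=0$ a.s. Your write-up merely makes explicit the steps the paper leaves implicit (passage to the separable range space $X_0$, Hahn--Banach extension of the dense functionals, and the observation that pure discontinuity is not actually used).
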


\begin{definition}
 Let $X$ be a Banach space, $p\in (1,\infty)$. Then we define $\mathcal M_X^{p, q}\subset \mathcal M_X^{p,d}$ as a linear space of all $X$-valued purely discontinuous quasi-left continuous $L^p$-martingales which start at $0$. We define $\mathcal M_X^{p, a}\subset \mathcal M_X^{p,d}$ as a linear space of all $X$-valued purely discontinuous $L^p$-martingales with accessible jumps.
\end{definition}

\begin{proposition*}\label{prop:M^pqandM^paareclosed}
 Let $X$ be a Banach space, $1<p<\infty$. Then $\mathcal M_X^{p, q}$ and $\mathcal M_X^{p, a}$ are closed subspaces of $\mathcal M_X^{p, d}$.
\end{proposition*}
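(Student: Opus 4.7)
The plan is to use the completeness of $\mathcal M_X^{p, d}$ established in Proposition \ref{thm:M^p,discomplete}. Given any Cauchy sequence $(M^n)_{n\geq 1}$ in $\mathcal M_X^{p, q}$ (respectively in $\mathcal M_X^{p, a}$), Proposition \ref{thm:M^p,discomplete} supplies a limit $M \in \mathcal M_X^{p, d}$, so the task reduces to verifying that $M$ inherits quasi-left continuity (resp.\ the property of having accessible jumps). Note that this approach sidesteps Lemma \ref{lem:weakdefforaccjumpsandqlc}, which would have required reflexivity of $X$, and instead works directly from the vector-valued definition of these properties in terms of jumps at stopping times.

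The key step is to move from $L^p$-convergence of the terminal values to a.s.\ uniform convergence of the paths. I would apply Doob's maximal inequality to the $X$-valued martingale $M^n - M$ to obtain
\[
\mathbb E \sup_{t\geq 0}\|M^n_t - M_t\|^p \lesssim_p \mathbb E \|M^n_\infty - M_\infty\|^p \longrightarrow 0.
\]
Passing to a subsequence (which I keep labelling by $n$), I may assume that $\sup_{t\geq 0}\|M^n_t - M_t\| \to 0$ a.s., where I work throughout with the c\`adl\`ag versions supplied by Proposition \ref{prop:cadlagvers}.

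With this uniform path convergence in hand, fix any stopping time $\tau$. On the event of full measure where the supremum converges to zero, one has $M^n_\tau \to M_\tau$ directly, and also $M^n_{\tau-} \to M_{\tau-}$ by letting $t \uparrow \tau$ on both sides of the uniform bound $\|M^n_t - M_t\|\leq \eps_n$ with $\eps_n \to 0$. Consequently $\Delta M^n_\tau \to \Delta M_\tau$ almost surely. If each $M^n$ is quasi-left continuous and $\tau$ is predictable (resp.\ each $M^n$ has accessible jumps and $\tau$ is totally inaccessible), then $\Delta M^n_\tau = 0$ a.s.\ for every $n$, whence $\Delta M_\tau = 0$ a.s. By definition this places $M$ in $\mathcal M_X^{p, q}$ (resp.\ $\mathcal M_X^{p, a}$), proving closedness.

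I do not foresee any real obstacle here; the only point requiring a little care is the passage from uniform convergence of c\`adl\`ag paths to convergence of the left limits $M^n_{\tau-} \to M_{\tau-}$ at an arbitrary stopping time $\tau$, but this is routine once the uniform supremum estimate is available.
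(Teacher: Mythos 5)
Your argument is correct, but it takes a genuinely different route from the paper. The paper's proof scalarizes: it reduces the question to the real-valued martingales $\langle M, x^*\rangle$ via Lemma \ref{lem:weakdefforaccjumpsandqlc}, and then runs a Burkholder--Davis--Gundy/quadratic-variation argument in the spirit of Proposition \ref{prop:limthmforaccjqlc}, using the real-valued Yoeurp decomposition (Theorem \ref{thm:Yoeurpdec}) to split $[N-N^n]_{\infty}$ into continuous, quasi-left continuous and accessible parts and concluding $[N]^c_{\infty}=[N]^a_{\infty}=0$ a.s. You instead upgrade the $L^p$-convergence of terminal values to a.s.\ uniform convergence of c\`adl\`ag paths via Doob's maximal inequality and a subsequence, and then check the defining jump condition $\Delta M_{\tau}=0$ directly for the $X$-valued limit at predictable (resp.\ totally inaccessible) stopping times; the passage $M^n_{\tau-}\to M_{\tau-}$ under uniform convergence is indeed routine. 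What your approach buys: it is more elementary (no quadratic variations, no BDG), it handles both cases by one and the same estimate, and it works verbatim for an arbitrary Banach space, thereby avoiding the reflexivity hypothesis under which Lemma \ref{lem:weakdefforaccjumpsandqlc} is stated, even though the proposition itself assumes only that $X$ is a Banach space. What the paper's route buys is consistency with the scalar-valued limit theorem and the quadratic-variation machinery used throughout the section, and it never needs to discuss convergence of left limits at stopping times. Two small bookkeeping points you should add: the limit martingale is purely discontinuous by Proposition \ref{thm:M^p,discomplete} (you invoke this, so fine), and for the $\mathcal M_X^{p,q}$ case one must also record that $M_0=0$, which is immediate from $M^n_0=0$ and the convergence you established.
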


The following lemma follows from Corollary \ref{cor:M=M_0ifMbothaccjandqlc}.

\begin{lemma*}\label{lem:accjumps+qlc=cons}
 Let $X$ be a Banach space, $M:\mathbb R_+ \times\Omega \to X$ be a purely discontinuous martingale. Let $M$ be both with accessible jumps and quasi-left continuous. Then $M = M_0$ a.s. In other words, $\mathcal M_X^{p, q}\cap\mathcal M_X^{p, a} = 0$.
\end{lemma*}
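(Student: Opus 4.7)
The plan is to reduce to the scalar-valued case already proved in Corollary~\ref{cor:M=M_0ifMbothaccjandqlc}. First I would fix an arbitrary $x^* \in X^*$ and observe that, by Definition~\ref{def:purelydiscXvalued}, the real-valued martingale $\langle M, x^*\rangle$ is purely discontinuous. For any stopping time $\tau$, the c\`adl\`ag version of $M$ gives $\Delta\langle M, x^*\rangle_\tau = \langle \Delta M_\tau, x^*\rangle$, so the standing hypotheses $\Delta M_\tau=0$ a.s.\ for all totally inaccessible $\tau$ and for all predictable $\tau$ transfer immediately to $\langle M, x^*\rangle$. Hence $\langle M, x^*\rangle$ is a real-valued purely discontinuous martingale which is simultaneously with accessible jumps and quasi-left continuous, and Corollary~\ref{cor:M=M_0ifMbothaccjandqlc} forces
\[
 \langle M_t, x^*\rangle = \langle M_0, x^*\rangle \quad \text{a.s.\ for each } t \geq 0.
\]

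The remaining task is to upgrade this scalar identity, valid for each individual $x^*$, to the pathwise identity $M = M_0$ a.s.\ in $X$. Fix $t \geq 0$. Since $M_t - M_0$ is strongly measurable, the Pettis measurability theorem provides a separable closed subspace $X_0 \subset X$ such that $M_t - M_0 \in X_0$ a.s. A standard Hahn--Banach argument yields a countable set $(x_n^*)_{n\geq 1}\subset X^*$ whose restrictions to $X_0$ are norming, i.e.\ $\|y\| = \sup_n |\langle y, x_n^*\rangle|$ for every $y \in X_0$. Applying the first paragraph to each $x_n^*$ and intersecting countably many full-measure events yields a single null set outside of which $\langle M_t-M_0, x_n^*\rangle = 0$ for all $n$ and $M_t - M_0 \in X_0$, giving $M_t = M_0$ a.s. Combined with Proposition~\ref{prop:cadlagvers}, the c\`adl\`ag version then promotes the pointwise-in-$t$ identity to indistinguishability, and the second assertion $\mathcal M_X^{p,q} \cap \mathcal M_X^{p,a} = 0$ follows by taking $t\to\infty$.

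The only delicate point is the passage from ``a.s.\ for each $x^*$'' to ``a.s.\ as $X$-valued processes,'' because $X$ is not assumed to be separable or to have a separable dual; without such an assumption the uncountable family of null sets indexed by $x^* \in X^*$ cannot simply be amalgamated. The Pettis measurability reduction to the separable subspace $X_0$ is the step that legitimizes the countable Hahn--Banach choice and resolves this issue.
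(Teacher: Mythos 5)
Your proof is correct and follows essentially the same route as the paper: reduce to the real-valued case by applying Corollary~\ref{cor:M=M_0ifMbothaccjandqlc} to $\langle M,x^*\rangle$, which inherits pure discontinuity, accessible jumps and quasi-left continuity. The paper phrases this contrapositively (if $\mathbb P(M\neq M_0)>0$ then some $x^*$ detects it), implicitly relying on the same separability/strong-measurability facts that you spell out via the Pettis theorem and a countable norming sequence, so your write-up is just a more explicit version of the same argument.
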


The main theorem of this subsection is the following UMD variant of Theorem~\ref{thm:Yoeurpdec}.

\begin{theorem}\label{thm:YoeurpdecUMD}
 Let $X$ be a UMD Banach space, $M:\mathbb R_+ \times \Omega \to X$ be a purely discontinuous $L^p$-martingale. Then there exist unique purely discontinuous martingales $M^a,M^q:\mathbb R_+ \times \Omega \to X$ such that $M^a$ has accessible jumps, $M^q$ is quasi-left continuous, $M^q_0=0$ and $M = M^a + M^q$. Moreover, if this is the case, then for $i\in \{a,q\}$
 \begin{equation}\label{eq:thm:YoeurpdecUMDaq}
  (\mathbb E\|M^i_{\infty}\|^p)^{\frac 1p}\leq \beta_{p,X}(\mathbb E\|M_{\infty}\|^p)^{\frac 1p}.
 \end{equation}
\end{theorem}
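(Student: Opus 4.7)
The plan is to mirror Step 1 of the proof of Theorem \ref{thm:Meyer-Yoeurp}, with the continuous/purely-discontinuous dichotomy replaced by the accessible-jump/quasi-left-continuous dichotomy. Two things simplify: no time change or Brownian representation is needed, because both $M^a$ and $M^q$ are purely discontinuous, so the continuous quadratic variation term in It\^o's formula vanishes. One thing becomes more delicate: the jump sum has to be split according to whether the jump time is totally inaccessible or predictable.

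\emph{Finite dimensional $X$.} First produce the decomposition $M=M^a+M^q$ coordinate-wise via Theorem \ref{thm:Yoeurpdec} and Lemma \ref{lem:weakdefforaccjumpsandqlc}. After regularising distributions by Proposition \ref{prop:approxXbyYvalued} and Remark \ref{rem:sumofapprox}, apply the It\^o formula of Theorem \ref{thm:itoformula} to the Burkholder function $U$ at the pair $(M,M^q)$. Since $[\langle M,x^*\rangle]^c = [\langle M^q,x^*\rangle]^c = 0$ for every $x^*\in X^*$, the analogue of the term $I_2$ in \eqref{eq:EU(M^d+M^c,M^d)} is identically zero, and the stochastic integral contributions have vanishing mean by Lemma \ref{lemma:stochintmoment} together with Remark \ref{rem:propertiesofV}(E). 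What remains is the jump sum
\[
I_1 = \sum_{0<s\leq t}\bigl[\Delta U(M_s,M^q_s) - \langle \partial_x U(M_{s-},M^q_{s-}),\Delta M_s\rangle - \langle \partial_y U(M_{s-},M^q_{s-}),\Delta M^q_s\rangle\bigr].
\]
Rewriting via \eqref{eq:VthroughU}, each summand takes the form $V(\tilde x + a\Delta M_s, \tilde y + b\Delta M_s) - V(\tilde x,\tilde y) - a\langle\partial_x V,\Delta M_s\rangle - b\langle\partial_y V,\Delta M_s\rangle$ with $\tilde x = M_{s-}+M^q_{s-}$ and $\tilde y = M^q_{s-}-M_{s-}$, where $(a,b)$ is determined by the type of the jump time. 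Using the fact that every jump of a c\`adl\`ag martingale sits at a totally inaccessible or a predictable stopping time, there are exactly two cases: if $s$ is totally inaccessible then $\Delta M^a_s=0$, $\Delta M^q_s=\Delta M_s$, giving $(a,b)=(2,0)$; if $s$ is predictable then $\Delta M^q_s=0$, $\Delta M^a_s=\Delta M_s$, giving $(a,b)=(1,-1)$. In both regimes $|a+b|\leq|a-b|$, so Remark \ref{rem:propertiesofV}(D) makes each jump term, and hence $I_1$, nonpositive almost surely. Combining with $\mathbb E U(M_0,0)\leq 0$ (Remark \ref{rem:propertiesofU}(B) with $\alpha=0$), the inequality \eqref{eq:ineqonU} yields \eqref{eq:thm:YoeurpdecUMDaq} for $i=q$. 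The bound for $i=a$ follows symmetrically, the two cases being interchanged and the argument starting from $U(M_0,M_0)\leq 0$ (Remark \ref{rem:propertiesofU}(B) with $\alpha=1$).

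\emph{General $X$.} Use Remark \ref{cor:approxofconandpurediscmartbyfd} to pick a sequence $(M^n)_n$ of purely discontinuous $L^p$-martingales, each valued in a finite dimensional subspace of $X$, with $M^n_\infty\to M_\infty$ in $L^p(\Omega;X)$. Apply the finite-dimensional step to obtain decompositions $M^n=M^{n,a}+M^{n,q}$; the estimate \eqref{eq:thm:YoeurpdecUMDaq} at finite dimensional level makes $(M^{n,a}_\infty)_n$ and $(M^{n,q}_\infty)_n$ Cauchy in $L^p(\Omega;X)$. Define $M^a$ and $M^q$ as the c\`adl\`ag conditional-expectation martingales generated by their $L^p$-limits. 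By Proposition \ref{prop:M^pqandM^paareclosed}, $\mathcal M_X^{p,a}$ and $\mathcal M_X^{p,q}$ are closed subspaces of $\mathcal M_X^{p,d}$, hence $M^a\in\mathcal M_X^{p,a}$ and $M^q\in\mathcal M_X^{p,q}$; the bound \eqref{eq:thm:YoeurpdecUMDaq} passes to the limit. Uniqueness is precisely Lemma \ref{lem:accjumps+qlc=cons} applied to the difference of any two candidate decompositions.

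\emph{Main obstacle.} The technical heart is the jump analysis: one must invoke the classical dichotomy that every jump of a c\`adl\`ag martingale is either predictable or totally inaccessible, and then recognise that in each regime the increments of $(M,M^q)$, respectively $(M,M^a)$, produce a pair $(a,b)$ satisfying the Burkholder concavity constraint $|a+b|\leq|a-b|$. Once this is observed the rest is a streamlined version of the Meyer-Yoeurp argument, with the $I_2$-contribution absent.
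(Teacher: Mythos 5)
Your general-case limiting argument (finite-dimensional approximation via Remark \ref{cor:approxofconandpurediscmartbyfd}, Cauchy sequences from the finite-dimensional estimate plus linearity/uniqueness of the Yoeurp decomposition, closedness of $\mathcal M_X^{p,a}$ and $\mathcal M_X^{p,q}$ from Proposition \ref{prop:M^pqandM^paareclosed}, uniqueness from Lemma \ref{lem:accjumps+qlc=cons}) coincides with the paper's Step 2. Where you genuinely diverge is the finite-dimensional step. The paper does not rerun the Burkholder--It\^o machinery at all: it observes that $[\langle M,x^*\rangle]=[\langle M^a,x^*\rangle]+[\langle M^q,x^*\rangle]$ (Theorem \ref{thm:Yoeurpdec} plus Lemma \ref{lem:weakdefforaccjumpsandqlc}), so that $M^a$ (resp.\ $M^q$) is weakly differentially subordinated to $M$, and then simply invokes the purely discontinuous weak differential subordination theorem of \cite{Y17FourUMD} (Theorem \ref{thm:ewakdiffsubUMDpurdisc}) to get the constant $\beta_{p,X}$. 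You instead redo that argument from scratch: It\^o's formula applied to $U$ at $(M,M^q)$ and $(M,M^a)$, vanishing of the $I_2$-term since both processes are purely discontinuous, and the jump dichotomy giving $(a,b)=(2,0)$ at totally inaccessible times and $(a,b)=(1,-1)$ at predictable times, both satisfying $|a+b|\leq|a-b|$ so that Remark \ref{rem:propertiesofV}(D) applies. This is essentially an inlined proof of the special case of Theorem \ref{thm:ewakdiffsubUMDpurdisc} that is needed, and it is sound in outline; its merit is that it is self-contained and makes the appearance of $\beta_{p,X}$ transparent, whereas the paper's reduction is shorter and inherits all technicalities from the cited result.

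Two points deserve care if you pursue your route. First, the jump dichotomy needs the classical covering statement (jump times of a c\`adl\`ag adapted process are exhausted by countably many stopping times, each predictable or totally inaccessible, cf.\ \cite[Proposition 25.17]{Kal}); you flag this correctly, but it must be cited, since ``every jump sits at a predictable or totally inaccessible time'' is exactly this covering assertion. Second, and more substantively, Proposition \ref{prop:approxXbyYvalued} and Remark \ref{rem:sumofapprox} only guarantee that the regularised martingales remain continuous or purely discontinuous; they say nothing about preserving quasi-left continuity or accessibility of jumps, and your jump analysis needs the regularised pair to still satisfy $\Delta M^q_s=\Delta M_s$ at totally inaccessible times and $\Delta M^q_s=0$ at predictable times. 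One would have to check the construction in \cite{Y17FourUMD} to see that the perturbation does not disturb the jump structure. The paper's reduction to Theorem \ref{thm:ewakdiffsubUMDpurdisc} avoids this issue entirely, which is a quiet advantage of its shorter route.
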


\begin{proof}
{\em Step 1: finite dimensional case.} First assume that $X$ is finite dimensional. Then $M^a$ and $M^q$ exist and unique due to coordinate-wise applying of Theorem~\ref{thm:Yoeurpdec}.
Let $M = M^a + M^q$, $N= M^a$. Then for any $x^* \in X^*$, $t\geq 0$ by Theorem~\ref{thm:Yoeurpdec} and Lemma~\ref{lem:weakdefforaccjumpsandqlc}~a.s.
\begin{align*}
[\langle M, x^*\rangle]_t = [\langle M, x^*\rangle]^a_t + [\langle M, x^*\rangle]^q_t = [\langle M^a, x^*\rangle]_t + [\langle M^q, x^*\rangle]_t,
\end{align*}
and
\[
 [\langle N, x^*\rangle]_t = [\langle N, x^*\rangle]^a_t + [\langle N, x^*\rangle]^q_t = [\langle M^a, x^*\rangle]_t .
\]
Therefore a.s.
\[
 [\langle N, x^*\rangle]_t -[\langle N, x^*\rangle]_s \leq [\langle M, x^*\rangle]_t -[\langle M, x^*\rangle]_s,\;\;\; 0\leq s<t.
\]
Moreover $M_0 = N_0$.
Hence $N$ is weakly differentially subordinated to $M$ (see Section \ref{sec:weakdifsubandgenmart}), and \eqref{eq:thm:YoeurpdecUMDaq} for $i=a$ follows from \cite{Y17FourUMD}. By the same reason and since $M^q_0 = 0$, \eqref{eq:thm:YoeurpdecUMDaq} holds true for $i=q$.

{\em Step 2: general case.} 
 Now let $X$ be general. Let $\xi = M_{\infty}$.  Without loss of generality we set $\mathcal F_{\infty} = \mathcal F_t$. Let $(\xi_n)_{n\geq 1}$ be a sequence of simple $\mathcal F_t$-measurable functions in $L^p(\Omega; X)$ such that $\xi_n \to \xi$ as $n\to \infty$ in $L^p(\Omega; X)$. For each $n\geq 1$ define $\mathcal F_t$-measurable $\xi_n^d$ and $\xi_n^c$ such that $M^{d,n} = (\mathbb E(\xi_n^d|\mathcal F_{s}))_{s\geq 0}$ and $M^{c,n}=(\mathbb E(\xi_n^c|\mathcal F_{s}))_{s\geq 0}$ are respectively purely discontinuous and continuous parts of a martingale $(\mathbb E(\xi_n|\mathcal F_{s}))_{s\geq 0}$ as in Remark \ref{rem:YoeMey}. Then thanks to Theorem \ref{thm:Meyer-Yoeurp}, $\xi^d_n \to \xi$ and $\xi_n^c \to 0$ in $L^p(\Omega; X)$ as $n\to \infty$ since $M$ is purely discontinuous.
 
 Since for each $n\geq 1$ the random variable $\xi^d_n$ takes its values in a finite dimensional space, by Theorem \ref{thm:Yoeurpdec} there exist $\mathcal F_t$-measurable $\xi^a, \xi^q\in L^p(\Omega; X)$ such that purely discontinuous martingales  $M^{a,n}=(\mathbb E(\xi_n^a|\mathcal F_{s}))_{s\geq 0}$ and $M^{q,n}=(\mathbb E(\xi_n^q|\mathcal F_{s}))_{s\geq 0}$ are respectively with accessible jumps and quasi-left continuous, $\mathbb E(\xi_n^q|\mathcal F_{0})=0$, and the decomposition 
 $M^{d,n} = M^{a, n} + M^{q,n}$
is as in Theorem~\ref{thm:Yoeurpdec}. Since $(\xi^d_n)_{n\geq 1}$ is a Cauchy sequence in $L^p(\Omega; X)$, by Step 1 both $(\xi^a_n)_{n\geq 1}$ and $(\xi^q_n)_{n\geq 1}$ are Cauchy in $L^p(\Omega; X)$ as well. Let $\xi^a$ and $\xi^q$ be their limits. Define martingales $M^a, M^q:\mathbb R_+ \times \Omega \to X$ in the following way:
\begin{align*}
 M^a_s:= \mathbb E(\xi^a|\mathcal F_s),\;\; M^q_s:=\mathbb E(\xi^q|\mathcal F_s),\;\;\; s\geq 0.
\end{align*}
By Proposition \ref{prop:M^pqandM^paareclosed} $M^a$ is a martingale with accessible jumps, $M^q$ is quasi-left continuous, $M^q_0 = 0$ a.s., and therefore $M = M^a + M^q$ is the desired decomposition. Moreover, by Step 1 for each $n\geq 1$ and $i\in \{a,q\}$,
$(\mathbb E \|\xi^i_n\|^p)^{\frac 1p} \leq \beta_{p, X} (\mathbb E \|\xi^d_n\|^p)^{\frac 1p}$,
and hence the estimate \eqref{eq:thm:YoeurpdecUMDaq} follows by letting $n$ to infinity.

The uniqueness of the decomposition follows from Lemma \ref{lem:accjumps+qlc=cons}.
\end{proof}

The following theorem, as Theorem \ref{thm:exampleforlowboundofA}, illustrates that the decomposition in Theorem \ref{thm:YoeurpdecUMD} takes place only in the UMD space case.

\begin{theorem}\label{thm:exampleforlowboundofA^qA^a}
 Let $X$ be a finite dimensional Banach space, $p\in(1,\infty)$, $\delta\in\bigl(0, \frac{\beta_{p, X}-1}{2}\bigr)$. Then there exist purely discontinuous martingales $M^a, M^q:\mathbb R_+ \times \Omega \to X$ such that $M^a$ has accessible jumps, $M^q$ is quasi-left continuous, $\mathbb E \|M^a_{\infty}\|^p$, $\mathbb E \|M^q_{\infty}\|^p<\infty$, $M^a_0 = M^q_0=0$, and for $M = M^a + M^q$ and $i\in \{a,q\}$ the following holds
 \begin{equation}\label{eq:exampleforlowboundofA^aA^q}
  (\mathbb E \|M^i_{\infty}\|^p)^{\frac 1p} \geq \Bigl(\frac{\beta_{p, X}-1}{2} - \delta \Bigr) (\mathbb E \|M_{\infty}\|^p)^{\frac 1p}.
 \end{equation}
\end{theorem}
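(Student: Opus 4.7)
The plan is to mimic the proof of Theorem \ref{thm:exampleforlowboundofA} verbatim, replacing the continuous $L^p$-approximation of a Rademacher variable (Lemma \ref{lem:contapproxradem}) by a quasi-left continuous one. The role of the continuous part of the Meyer-Yoeurp decomposition is played by the quasi-left continuous part of the Yoeurp decomposition, and the role of the (predictable-time) purely discontinuous part is played by the accessible-jumps part of the Yoeurp decomposition.

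First I would establish the analog of Lemma \ref{lem:contapproxradem}: for each $\eps>0$ there exists a quasi-left continuous martingale $M:[0,1]\times \Omega \to \{-1,0,1\}$ with $M_0=0$ whose terminal value $M_1$ is almost surely equal to a Rademacher variable, so in particular $\|M_1-\sign M_1\|_{L^p(\Omega)}=0$. Concretely, let $\tau$ be a random variable uniformly distributed on $(0,1)$ and $\eta$ an independent Rademacher variable, set $M_t = \eta\,\mathbf 1_{\tau\le t}$, and use the (augmented, right-continuous) natural filtration. The only jump of $M$ occurs at the totally inaccessible time $\tau$, so $M$ is quasi-left continuous, and $M_1=\eta$. (In contrast to Theorem \ref{thm:exampleforlowboundofA}, no error term is incurred in this approximation step.)

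Second, following the proof of Theorem \ref{thm:exampleforlowboundofA}, I would invoke Proposition \ref{prop:UMD01sequence} together with \cite[Theorem 3.6.1]{HNVW1} to obtain a Paley-Walsh martingale $(f_n)_{n=0}^N$ with representation $df_n = r_n\phi_n(r_1,\ldots,r_{n-1})$ and a $\{0,1\}$-valued sequence $(\eps_n)_{n=1}^N$ such that $\bigl(\mathbb E\|\sum_{n=1}^N \eps_n df_n\|^p\bigr)^{1/p} \geq \gamma_{p,X}^{\delta}\bigl(\mathbb E\|f_N\|^p\bigr)^{1/p}$, where $\gamma_{p,X}^{\delta} = (\beta_{p,X}-\delta-1)/2$. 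I then take independent copies $(M^n,\tau_n,\eta_n)_{n=1}^N$ of the object from step one and define $M:[0,N+1]\times\Omega\to X$ on the interval $[n,n+1)$, setting $\sigma_n:=\eta_n$, by
\[
M_t = M_{n-} + \begin{cases} M^n_{t-n}\,\phi_n(\sigma_1,\ldots,\sigma_{n-1}), & \eps_n=0,\\[2pt] \sigma_n\,\mathbf 1_{t\ge n+1/2}\,\phi_n(\sigma_1,\ldots,\sigma_{n-1}), & \eps_n=1.\end{cases}
\]
For $\eps_n=1$ the jump is at the deterministic (hence predictable) time $n+1/2$, and for $\eps_n=0$ the jump is at the totally inaccessible time $n+\tau_n$. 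Consequently, the unique Yoeurp decomposition $M=M^a+M^q$ of Theorem \ref{thm:YoeurpdecUMD} is given by
\[
M^a_{N+1} = \sum_{n=1}^N \eps_n \sigma_n \phi_n(\sigma_1,\ldots,\sigma_{n-1}),\qquad M^q_{N+1} = \sum_{n=1}^N (1-\eps_n) \sigma_n \phi_n(\sigma_1,\ldots,\sigma_{n-1}),
\]
and $M_{N+1}=\sum_{n=1}^N \sigma_n \phi_n(\sigma_1,\ldots,\sigma_{n-1})$ has the same distribution as $f_N$ since $(\sigma_n)$ and $(r_n)$ are both sequences of independent Rademachers. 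Applying Proposition \ref{prop:UMD01sequence} to $(\eps_n)$ and, via Remark \ref{rem:thesamelowerboundford_j}, to $(1-\eps_n)$, yields \eqref{eq:exampleforlowboundofA^aA^q} for both $i=a$ and $i=q$.

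The main obstacle I anticipate is justifying the filtration-theoretic claims: on the product filtration generated by all the $(\tau_n,\eta_n)$ together with the deterministic clock, each $\tau_n$ should remain totally inaccessible and each time $n+1/2$ should remain predictable, so that the above splitting is genuinely the Yoeurp decomposition. This is a routine product-space argument — deterministic times are predictable, and jump times of an independent continuously distributed random variable are totally inaccessible — but needs a careful reference, e.g.\ to \cite[Ch.~25]{Kal}. Once this bookkeeping is in place, the remainder of the proof is lighter than that of Theorem \ref{thm:exampleforlowboundofA} because the Rademacher approximation is exact and no additive error of size $\delta/L$ has to be carried through the estimates.
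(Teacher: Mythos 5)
Your proposal is correct, and at the structural level it is exactly the paper's strategy: the paper's proof of Theorem \ref{thm:exampleforlowboundofA^qA^a} is a one-line reduction to the proof of Theorem \ref{thm:exampleforlowboundofA}, with the continuous approximation of a Rademacher variable (Lemma \ref{lem:contapproxradem}) replaced by a quasi-left continuous one. The only genuine difference is the auxiliary building block. The paper uses Lemma \ref{lem:approxRadbyqlc}: the difference of two stopped independent Poisson processes of large intensity, which produces $\sign M_1$ Rademacher only up to an $L^p$-error $\eps$ and needs an extra small martingale with accessible jumps ($\frac{\eps}{2}r$) to rule out $M_1=0$; the error terms then have to be carried through the estimates exactly as in Theorem \ref{thm:exampleforlowboundofA}. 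You instead take the single-jump martingale $M_t=\eta\,\mathbf 1_{\tau\le t}$ with $\tau$ uniform on $(0,1)$, whose terminal value is exactly Rademacher; this eliminates the $\eps$-bookkeeping and the accessible perturbation entirely, at the price of verifying directly that $\tau$ (and each shifted copy $n+\tau_n$ in the global filtration) is totally inaccessible — which holds because the compensator $t\mapsto-\ln\bigl(1-(t\wedge\tau)\bigr)$ is continuous and is unchanged under enlargement by independent information, so your flagged ``bookkeeping'' step is indeed routine (one should use the progressively revealed block filtration rather than literally the $\sigma$-algebra of all $(\tau_n,\eta_n)$, so that the jump at $n+\tfrac12$ stays a martingale increment). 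One small point of precision: Proposition \ref{prop:UMD01sequence} applied to $(\eps_n)$ alone gives no control of the complementary transform $\sum_n(1-\eps_n)d_n$, so the pair $(d_n),(\eps_n)$ must be chosen from the outset via Remark \ref{rem:thesamelowerboundford_j} (i.e.\ from a $\{-1,1\}$-sequence through the multipliers $\frac{\eps_n'\pm1}{2}$), so that the lower bound holds simultaneously for $i=a$ and $i=q$ for the same martingale $M$; you do cite the Remark, and with its parameter taken as $2\delta$ you land exactly on the constant $\frac{\beta_{p,X}-1}{2}-\delta$ without any further losses, which is a mild simplification over the paper's route.
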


For the proof we will need the following lemma.

\begin{lemma}\label{lem:approxRadbyqlc}
 Let $\eps\in \bigl(0,\frac 12\bigr)$, $p\in (1,\infty)$. Then there exist martingales $M, M^a, M^q:[0,1] \times \Omega \to [-1-\eps, 1+\eps]$ with symmetric distributions such that $M^a$ is a martingale with accessible jumps, $\|M^a_1\|_{L^p(\Omega)} <\eps$, $M^q$ is a quasi-left continuous martingale, $M^q_0=0$ a.s., $M = M^a+M^q$, $\sign M_1$ is a Rademacher random variable and 
 \begin{equation}\label{eq:approxRadbyqlc}
    \|M_1 - \sign M_1\|_{L^p(\Omega)}< \eps.
 \end{equation}
\end{lemma}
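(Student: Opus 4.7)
My plan is to take $M^a\equiv 0$ and construct $M=M^q$ as a single-jump, purely discontinuous, quasi-left continuous martingale whose terminal value is \emph{exactly} a Rademacher variable; the $\eps$-slack in the statement will then not be needed. The key object will be a totally inaccessible jump time $\tau$ that occurs almost surely by time $1$, for which I would take $\tau$ uniformly distributed on $[0,1]$.

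Concretely, I would let $\tau$ be uniform on $[0,1]$, let $\sigma\in\{-1,1\}$ be a Rademacher variable independent of $\tau$, and let $\mathbb F=(\mathcal F_t)_{t\ge 0}$ be the usual augmentation of the natural filtration of the $\mathbb R^2$-valued process $(\mathbf 1_{\tau\le t},\sigma\mathbf 1_{\tau\le t})_{t\ge 0}$. Then I would set
\[
M^q_t := \sigma\mathbf 1_{\tau\le t},\qquad M^a_t:=0,\qquad M_t:=M^a_t+M^q_t,\qquad t\in[0,1].
\]
Because on $\{\tau>s\}\in\mathcal F_s$ the variable $\sigma$ is independent of $\mathcal F_s$ with $\mathbb E\sigma=0$, a direct computation gives $\mathbb E[M^q_t\mid\mathcal F_s]=M^q_s$ for all $s\le t$, so $M^q$, $M^a$ and $M$ are all martingales; they take values in $[-1,1]\subset[-1-\eps,1+\eps]$, and the symmetry of $\sigma$ combined with its independence of $\tau$ yields a symmetric distribution for each $M_t$.

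Next, the only jump of $M^q$ occurs at $\tau$, which I will argue is totally inaccessible in $\mathbb F$: the $\mathbb F$-compensator of $\mathbf 1_{[\tau,\infty)}$ coincides with the one computed in the natural filtration of $\mathbf 1_{[\tau,\infty)}$ alone (since enlarging by $\sigma\mathbf 1_{\tau\le t}$ reveals no information about $\tau$ before $\tau$), and equals $A_t=-\ln(1-t\wedge\tau)$, which is continuous. Hence $M^q$ is purely discontinuous and quasi-left continuous, whereas $M^a\equiv 0$ trivially has accessible jumps, and $M^q_0=0$. Finally, since $\tau\le 1$ a.s., one has $M_1=\sigma\mathbf 1_{\tau\le 1}=\sigma$ a.s., so $\sign M_1=\sigma$ is a Rademacher variable and both $\|M^a_1\|_{L^p(\Omega)}$ and $\|M_1-\sign M_1\|_{L^p(\Omega)}$ equal $0<\eps$.

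The only step that is not an immediate verification is the total inaccessibility of $\tau$ in the augmented natural filtration; I would invoke the standard criterion that a stopping time is totally inaccessible iff the Doob-Meyer compensator of $\mathbf 1_{[\tau,\infty)}$ is continuous (see \cite{Kal}), and the continuity of $A$ above is immediate from $\tau$ having a continuous law on $[0,1]$. I do not expect any substantive obstacle, and in particular the $\eps$-slack in the statement is not needed for this construction.
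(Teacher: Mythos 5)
Your construction is correct, but it is genuinely different from the paper's. The paper takes two independent Poisson processes $N^+,N^-$ of large intensity, stops their difference at the first jump time (capped at $1$) to get a quasi-left continuous $M^q$ with $M^q_1\in\{-1,0,1\}$, and then must add the small accessible (in fact constant) martingale $M^a_t=\frac{\eps}{2}r$ precisely because $\mathbb P(M^q_1=0)>0$: the perturbation removes the atom at zero so that $\sign M_1$ is exactly Rademacher, and the $\eps$-slack in \eqref{eq:approxRadbyqlc} absorbs both the no-jump event and the size of $M^a$. You instead place a single Rademacher mark $\sigma$ at a time $\tau$ uniform on $[0,1]$, so that $M_1=\sigma$ exactly, $M^a\equiv 0$ (which trivially has accessible jumps, symmetric law and $\|M^a_1\|_{L^p}=0<\eps$), and all error terms vanish; the only nontrivial point is that $\tau$ is totally inaccessible in the usual augmentation of the natural filtration of the marked single-jump process, which you correctly settle via the standard criterion that total inaccessibility is equivalent to continuity of the compensator of $\mathbf 1_{[\tau,\infty)}$, the compensator here being the hazard integral $-\ln(1-t\wedge\tau)$, a.s.\ finite and continuous since $\tau<1$ a.s.\ (and unchanged by adjoining $\sigma\mathbf 1_{\tau\le t}$, since nothing about $\sigma$ is revealed before $\tau$). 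What each approach buys: the paper's route gets total inaccessibility for free from Poisson jump times at the cost of the atom at zero and hence the genuine need for the $\eps$-slack and a nonzero $M^a$; your route is sharper and arguably more elementary in its output (exact Rademacher terminal value, zero accessible part), at the cost of invoking the compensator characterization of totally inaccessible times. Note also that your $M^q$ is piecewise constant, hence of finite variation and purely discontinuous, so it serves equally well in the subsequent use of the lemma in the proof of Theorem \ref{thm:exampleforlowboundofA^qA^a}.
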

\begin{proof}
 Let $N^+, N^-:[0,1]\times \Omega \to \mathbb R$ be independent Poisson processes with the same intensity $\lambda_{\eps}$ such that $\mathbb P(N^+_1=0)=\mathbb P(N^-_1=0) <\frac {\eps^p} {2^p}$ (such $\lambda_{\eps}$ exists since $N^+_1$ and $N^-_1$ have Poisson distributions, see \cite{KingPois}). Define a stopping time $\tau$ in the following way:
 \[
  \tau = \inf\{t: N^+_t \geq 1\} \wedge\inf\{t: N^-_t \geq 1\} \wedge 1.
 \]
Let $M^q_t := N^+_{t\wedge \tau} - N^-_{t\wedge \tau}$, $t\in [0,1]$. Then $M^q$ is quasi-left continuous with a symmetric distribution. Let $r$ be an independent Rademacher variable, $M^a_t =\frac {\eps}{2} r$ for each $t\in [0,1]$. Then $M^a$ is a martingale with accessible jumps and symmetric distribution, and $\|M^a_1\|_{L^p(\Omega)} = \frac {\eps}{2}< \eps$. Let $M = M^a + M^q$. Then a.s. 
\begin{equation}\label{eq:lem:approxRadbyqlc}
M_1 \in \Bigl\{-1-\frac{\eps}{2}, -1+\frac{\eps}{2}, -\frac{\eps}{2},\frac{\eps}{2}, 1-\frac{\eps}{2}, 1+\frac{\eps}{2}\Bigr\},
\end{equation}
so $\mathbb P(M_1=0)=0$, and therefore $\sign M_1$ is a Rademacher random variable. Let us prove \eqref{eq:approxRadbyqlc}. Notice that due to \eqref{eq:lem:approxRadbyqlc} if $|M^q_1| = 1$, then $ |M_1 - \sign M_1| < \frac {\eps}{2}$, and if $|M^q_1| = 0$, then $ |M_1 - \sign M_1| <1$. Therefore
\begin{align*}
\mathbb E |M_1 - \sign M_1|^p &= \mathbb E |M_1 - \sign M_1|^p \mathbf 1_{|M^q_1| = 1} +  \mathbb E |M_1 - \sign M_1|^p \mathbf 1_{|M^q_1| = 0}\\
&< \frac {\eps^p} {2^p} + \frac {\eps^p} {2^p} < \eps^p,
\end{align*}
so \eqref{eq:approxRadbyqlc} holds.
\end{proof}
\begin{proof}[Proof of Theorem \ref{thm:exampleforlowboundofA^qA^a}]
 The proof is analogous to the proof of Theorem \ref{thm:exampleforlowboundofA}, while one has to use Lemma \ref{lem:approxRadbyqlc} instead of Lemma \ref{lem:contapproxradem}. 
\end{proof}

Theorem \ref{thm:exampleforlowboundofA^qA^a} yields the following characterization of the UMD property.

\begin{theorem}\label{thm:M^aM^qM^c}
 Let $X$ be a Banach space. Then $X$ is a UMD Banach space if and only if for some (equivalently, for all) $p\in(1,\infty)$ there exists $c_{p,X} >0$ such that for any $L^p$-martingale $M:=\mathbb R_+ \times \Omega \to X$ there exist unique martingales $M^c, M^q, M^a:\mathbb R_+ \times \Omega \to X$ such that $M^c_0=M^q_0=0$, $M^c$ is continuous, $M^q$ is purely discontinuous quasi-left continuous, $M^a$ is purely discontinuous with accessible jumps, $M=M^c + M^q + M^a$, and 
 \begin{equation}\label{eq:coriffXUMDcqlcaj}
    (\mathbb E \|M^c_{\infty}\|^p)^{\frac 1p}+ (\mathbb E \|M^q_{\infty}\|^p)^{\frac 1p} + (\mathbb E \|M^a_{\infty}\|^p)^{\frac 1p} \leq c_{p,X}(\mathbb E \|M_{\infty}\|^p)^{\frac 1p}.
 \end{equation}
If this is the case, then the least admissible $c_{p,X}$ is in the interval $\bigl[\frac{3\beta_{p, X}\!-\!3}{2}\vee 1,3\beta_{p, X}\bigr]$.
\end{theorem}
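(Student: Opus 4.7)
The plan is to derive the three-way decomposition by composing the Meyer--Yoeurp decomposition (Theorem~\ref{thm:Meyer-Yoeurp}) with the Yoeurp decomposition (Theorem~\ref{thm:YoeurpdecUMD}), and to derive the converse by reducing to Theorem~\ref{thm:charofUMDbyMeyYoe}.

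For the forward direction, assume $X$ is UMD and let $M$ be an $L^p$-martingale. First apply Theorem~\ref{thm:Meyer-Yoeurp} to obtain $M = M^c + M^d$ with $M^c$ continuous starting at zero, $M^d$ purely discontinuous, and each satisfying $(\mathbb E\|M^c_\infty\|^p)^{1/p}\vee(\mathbb E\|M^d_\infty\|^p)^{1/p} \leq \beta_{p,X}(\mathbb E\|M_\infty\|^p)^{1/p}$. Then apply Theorem~\ref{thm:YoeurpdecUMD} to the purely discontinuous piece $M^d$ to obtain $M^d = M^q + M^a$ with $M^q_0 = 0$, $M^q$ quasi-left continuous purely discontinuous, $M^a$ purely discontinuous with accessible jumps, and each satisfying $(\mathbb E\|M^q_\infty\|^p)^{1/p}\vee(\mathbb E\|M^a_\infty\|^p)^{1/p}\leq \beta_{p,X}(\mathbb E\|M^d_\infty\|^p)^{1/p}$. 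Summing the three norms on the left-hand side of~\eqref{eq:coriffXUMDcqlcaj} yields the bound $\beta_{p,X}(1 + 2\beta_{p,X})(\mathbb E\|M_\infty\|^p)^{1/p} \leq 3\beta_{p,X}^2(\mathbb E\|M_\infty\|^p)^{1/p}$; tightening this to the stated $3\beta_{p,X}$ requires bounding each of the three projections $M\mapsto M^c,\,M\mapsto M^q,\,M\mapsto M^a$ on $\mathcal M_X^p$ directly by $\beta_{p,X}$ via a Burkholder-function argument in the spirit of Theorem~\ref{thm:Meyer-Yoeurp}, rather than by chaining the two decompositions. Uniqueness of the three-way decomposition follows from Lemmas~\ref{lemma:contpuredisczero} and~\ref{lem:accjumps+qlc=cons} combined with the uniqueness parts of Theorems~\ref{thm:Meyer-Yoeurp} and~\ref{thm:YoeurpdecUMD}.

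For the converse direction, suppose every $L^p$-martingale admits a decomposition of the claimed form with constant $c_{p,X}$. Setting $M^d := M^q + M^a$, the sum is purely discontinuous by Definition~\ref{def:purelydiscXvalued}, so $M = M^c + M^d$ is a Meyer--Yoeurp-type decomposition with both pieces bounded by $c_{p,X}(\mathbb E\|M_\infty\|^p)^{1/p}$; Theorem~\ref{thm:charofUMDbyMeyYoe} then forces $X$ to be UMD. For the lower bound $c_{p,X} \geq \tfrac{3(\beta_{p,X}-1)}{2}\vee 1$, I would reduce to finite-dimensional $X$ and concatenate on disjoint time intervals the purely discontinuous construction of Theorem~\ref{thm:exampleforlowboundofA^qA^a} (which makes both $(\mathbb E\|M^q_\infty\|^p)^{1/p}$ and $(\mathbb E\|M^a_\infty\|^p)^{1/p}$ at least $(\tfrac{\beta_{p,X}-1}{2}-\delta)(\mathbb E\|M_\infty\|^p)^{1/p}$) with a continuous piece from Theorem~\ref{thm:exampleforlowboundofA} of comparable size, normalizing so that the three $L^p$-norms contribute additively to the left-hand side of~\eqref{eq:coriffXUMDcqlcaj}.

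The main obstacle is the upper bound: the naive chaining of Meyer--Yoeurp and Yoeurp produces a quadratic dependence on $\beta_{p,X}$, so removing the extra factor requires a direct Burkholder-function analysis of each of the three projections, analogous to (but finer than) the proof of Theorem~\ref{thm:Meyer-Yoeurp}. On the lower-bound side, the secondary difficulty is ensuring that the concatenated example simultaneously forces all three decomposition pieces to be close to their individual extremal sizes, rather than only one at a time.
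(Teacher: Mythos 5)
Your route coincides with the paper's own proof, which is exactly the combination you describe: Theorem \ref{thm:charofUMDbyMeyYoe}, Theorem \ref{thm:YoeurpdecUMD} and Theorem \ref{thm:exampleforlowboundofA^qA^a} for the equivalence and uniqueness, with the endpoints of the constant interval read off from \eqref{eq:Meyer-Yoeurp}, \eqref{eq:thm:YoeurpdecUMDaq}, \eqref{eq:exampleforlowboundofA} and \eqref{eq:exampleforlowboundofA^aA^q}. The gaps are precisely at the two endpoints you flag as ``obstacles''. For the upper bound you are right that chaining the two decompositions only gives $\beta_{p,X}+2\beta_{p,X}^2$, and that one needs $(\mathbb E\|M^i_\infty\|^p)^{1/p}\le \beta_{p,X}(\mathbb E\|M_\infty\|^p)^{1/p}$ for $i\in\{a,q\}$ with the \emph{full} martingale $M$ on the right; but you leave this unproved, and no genuinely finer tool is required: it follows by rerunning Step 1 of the proof of Theorem \ref{thm:Meyer-Yoeurp} for the pairs $(M,M^a)$ and $(M,M^q)$. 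Indeed, a.s.\ at every jump time either $\Delta M^a=\Delta M$ (predictable times) or $\Delta M^a=0$ (totally inaccessible times), so the jump term is controlled by \eqref{eq:dirderivativeV} with $(a,b)=(2,0)$ or $(1,-1)$; the continuous part of $M$ enters only through the second-order term, which is $\le 0$ by concavity of $U$ in its first variable; and $U(M_0,M^a_0)=U(M_0,M_0)\le 0$, $U(M_0,M^q_0)=U(M_0,0)\le 0$ by Remark \ref{rem:propertiesofU}(B). Without this supplement your argument only yields the theorem with $3\beta_{p,X}$ replaced by $\beta_{p,X}+2\beta_{p,X}^2$.

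For the lower bound, the concatenation you propose does not reach $\frac{3(\beta_{p,X}-1)}{2}$. If the example of Theorem \ref{thm:exampleforlowboundofA} is followed on a later time interval by an independent copy of the example of Theorem \ref{thm:exampleforlowboundofA^qA^a}, each normalized in $L^p$, then each of the three canonical pieces does have norm at least $\frac{\beta_{p,X}-1}{2}-\delta$, but the denominator $(\mathbb E\|M_\infty\|^p)^{1/p}$ of the concatenated martingale is of the order of the sum of the two normalizations (already $\sqrt 2$ when $p=2$), so this construction only produces a lower bound of roughly $\frac{3(\beta_{p,X}-1)}{4}$. To obtain the stated endpoint one needs a single interleaved construction in which the continuous, quasi-left continuous and accessible parts are simultaneously of size $\bigl(\frac{\beta_{p,X}-1}{2}-\delta\bigr)(\mathbb E\|M_\infty\|^p)^{1/p}$ relative to the \emph{same} martingale, i.e.\ a three-channel analogue of the proofs of Theorems \ref{thm:exampleforlowboundofA} and \ref{thm:exampleforlowboundofA^qA^a} built on a three-way splitting in the spirit of Remark \ref{rem:thesamelowerboundford_j}; note that each of the two quoted examples has one of the three pieces identically zero, so each on its own gives only $c_{p,X}\ge(\beta_{p,X}-1)\vee 1$. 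So your converse and uniqueness steps are fine and match the paper, but as written your plan establishes the equivalence with a constant interval strictly weaker than the one claimed in the statement.
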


The decomposition $M =M^c + M^q + M^a$ is called the {\em canonical decomposition} of the martingale $M$ (see \cite{Kal,Yoe76,DY17}).

\begin{proof}
The ``if and only if'' part follows from Theorem \ref{thm:charofUMDbyMeyYoe}, Theorem \ref{thm:YoeurpdecUMD} and Theorem \ref{thm:exampleforlowboundofA^qA^a}. The estimate $c_{p,X} \leq 3\beta_{p, X}$ follows from \eqref{eq:Meyer-Yoeurp} and \eqref{eq:thm:YoeurpdecUMDaq}. The estimate $c_{p,X} \geq \frac{3\beta_{p, X}-3}{2}\!\vee\!1$ follows from \eqref{eq:exampleforlowboundofA} and \eqref{eq:exampleforlowboundofA^aA^q}.
\end{proof}

\begin{corollary}
 Let $X$ be a Banach space. Then $X$ is a UMD Banach space if and only if $\mathcal M_X^{p, d} = \mathcal M_X^{p,a} \oplus\mathcal M_X^{p,q}$ and $\mathcal M_X^p = \mathcal M_X^{p,c}\oplus \mathcal M_X^{p,q}\oplus \mathcal M_X^{p,a}$ for any filtration that satisfies the usual conditions.
\end{corollary}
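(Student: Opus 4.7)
The corollary repackages the Meyer-Yoeurp and Yoeurp decompositions in the language of Banach space direct sums, so the forward direction is essentially a concatenation of the two main splitting theorems. Assuming $X$ is UMD, Theorem \ref{thm:charofUMDbyMeyYoe} gives $\mathcal M_X^p = \mathcal M_X^{p,c} \oplus \mathcal M_X^{p,d}$ with bounded projections on every filtration satisfying the usual conditions, while Theorem \ref{thm:YoeurpdecUMD} provides, for every $M \in \mathcal M_X^{p,d}$, a unique decomposition $M = M^a + M^q$ with $M^a \in \mathcal M_X^{p,a}$, $M^q \in \mathcal M_X^{p,q}$. Closedness of the summands (Proposition \ref{prop:M^pqandM^paareclosed}) together with the uniqueness furnished by Lemma \ref{lem:accjumps+qlc=cons} promote this to the internal direct sum $\mathcal M_X^{p,d} = \mathcal M_X^{p,a} \oplus \mathcal M_X^{p,q}$, and substituting into the Meyer-Yoeurp splitting gives $\mathcal M_X^p = \mathcal M_X^{p,c} \oplus \mathcal M_X^{p,q} \oplus \mathcal M_X^{p,a}$.

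For the converse, assume that both direct sum decompositions hold on every filtration that satisfies the usual conditions. Since each of $\mathcal M_X^{p,c}$, $\mathcal M_X^{p,q}$, $\mathcal M_X^{p,a}$ is a closed subspace of the Banach space $\mathcal M_X^p$ (Propositions \ref{prop:M^p,ciscomplete}, \ref{thm:M^p,discomplete}, \ref{prop:M^pqandM^paareclosed}), the closed graph theorem applied on each fixed filtration $\mathbb F$ produces bounded coordinate projections onto the three summands, and therefore a finite constant $c_{p,X}^{\mathbb F}$ such that the estimate \eqref{eq:coriffXUMDcqlcaj} holds for every $L^p$-martingale on $\mathbb F$. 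Feeding this into the ``if'' direction of Theorem \ref{thm:M^aM^qM^c} then yields that $X$ is UMD, once the uniformity issue described below has been handled.

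\textbf{Main obstacle.} The delicate point is that Theorem \ref{thm:M^aM^qM^c} demands one universal $c_{p,X}$, whereas the closed graph theorem only supplies a constant per filtration. I would close this gap by contraposition: if $X$ is not UMD then the UMD constants of the finite-dimensional subspaces of $X$ are unbounded, and Theorems \ref{thm:exampleforlowboundofA} and \ref{thm:exampleforlowboundofA^qA^a} applied inside subspaces $X_n \subset X$ with $\beta_{p,X_n} \to \infty$ yield $X_n$-valued martingales on which the canonical decomposition forces operator norm at least $(\beta_{p,X_n}-1)/2 - \delta$. Stacking these martingales on disjoint time intervals on a common product probability space, exactly in the spirit of the construction in the proof of Theorem \ref{thm:exampleforlowboundofA}, produces a single filtration $\mathbb F$ on which no bounded coordinate projection can exist. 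This contradicts the finite $c_{p,X}^{\mathbb F}$ provided by the closed graph theorem, so $X$ must be UMD.
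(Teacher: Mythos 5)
Your proposal is correct and takes essentially the same route as the paper: the paper's own proof is a one-line citation of Theorems \ref{thm:YoeurpdecUMD}, \ref{thm:exampleforlowboundofA^qA^a} and \ref{thm:M^aM^qM^c} (with Theorem \ref{thm:charofUMDbyMeyYoe} behind the Meyer--Yoeurp half), which are exactly the ingredients you assemble. The closed-graph argument and the stacking of finite-dimensional counterexamples on a common filtration that you supply for the converse is precisely the detail the paper leaves implicit, and it is sound.
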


\begin{proof}
 The corollary follows from Theorem \ref{thm:YoeurpdecUMD}, Theorem \ref{thm:exampleforlowboundofA^qA^a} and Theorem \ref{thm:M^aM^qM^c}.
\end{proof}

\subsection{Stochastic integration}
The current subsection is devoted to application of Theorem \ref{thm:M^aM^qM^c} to stochastic integration with respect to a general martingale.
\begin{proposition*}\label{prop:intpreserves}
 Let $H$ be a Hilbert space, $X$ be a Banach space, $M:\mathbb R_+ \times \Omega \to H$ be a martingale, $\Phi:\mathbb R_+\times \Omega \to \mathcal L(H, X)$ be elementary progressive. Then
 \begin{itemize}
  \item [(i)] if $M$ is continuous, then $\Phi \cdot M$ is continuous;
  \item[(ii)] if $M$ is purely discontinuous, then $\Phi\cdot M$ is purely discontinuous;
  \item[(iii)] if $M$ has accessible jumps, then $\Phi\cdot M$ has accessible jumps;
  \item[(iv)] if $M$ is quasi-left continuous, then $\Phi\cdot M$ is quasi-left continuous.
 \end{itemize}
\end{proposition*}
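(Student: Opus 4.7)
The plan is to reduce each of (i)--(iv) to the case of a single summand of \eqref{eq:defofstochintwrtM}. Since the classes of continuous, purely discontinuous, accessible-jump, and quasi-left continuous $X$-valued martingales are each stable under finite sums, it suffices to verify the four assertions for a process of the form
\[
 N_t := \mathbf 1_B\,\langle M_{t_k\wedge t} - M_{t_{k-1}\wedge t},\,h\rangle\,x,
 \qquad 0\leq t_{k-1}<t_k,\;\; B\in \mathcal F_{t_{k-1}},\;\; h\in H,\;\; x\in X.
\]
Since $\mathbf 1_B$ is $\mathcal F_{t_{k-1}}$-measurable and the map $y\mapsto \langle y,h\rangle x$ is bounded linear, a direct conditional-expectation calculation shows that $N$ is an $X$-valued martingale.

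Claim (i) is then immediate: continuity of the paths of $M$ in $H$ together with continuity of the inner product $\langle\cdot,h\rangle$ gives continuity of $N$.

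For (ii) I would fix $x^*\in X^*$, set $c := \langle x,x^*\rangle$, and write
\[
 \langle N,x^*\rangle_t = c\,\mathbf 1_B\bigl(\langle M_{t_k\wedge t},h\rangle - \langle M_{t_{k-1}\wedge t},h\rangle\bigr).
\]
By hypothesis $\langle M,h\rangle$ is a real-valued purely discontinuous martingale; stopping at $t_k$ and $t_{k-1}$ preserves pure discontinuity (the stopped quadratic variation remains pure jump), and so does taking the difference. Multiplying by the bounded $\mathcal F_{t_{k-1}}$-measurable factor $c\mathbf 1_B$ yields a martingale whose quadratic variation equals $c^2\mathbf 1_B\bigl([\langle M,h\rangle]_{t_k\wedge\cdot}-[\langle M,h\rangle]_{t_{k-1}\wedge\cdot}\bigr)$, still pure jump. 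Hence $\langle N,x^*\rangle$ is purely discontinuous, and (ii) follows from Definition \ref{def:purelydiscXvalued}.

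For (iii) and (iv) I would compute the jumps of $N$ directly. Since $N$ is constant on $[0,t_{k-1}]$ and on $[t_k,\infty)$, for every stopping time $\tau$ one has
\[
 \Delta N_\tau = \mathbf 1_B\,\mathbf 1_{\{t_{k-1}<\tau\leq t_k\}}\,\langle \Delta M_\tau,h\rangle\,x.
\]
If $M$ has accessible jumps and $\tau$ is totally inaccessible then $\Delta M_\tau=0$ a.s.\ in $H$, whence $\Delta N_\tau=0$ a.s., giving (iii); the identical reasoning with predictable $\tau$ yields (iv). The only mildly non-routine step is the quadratic-variation bookkeeping in (ii) showing that multiplication by a bounded $\mathcal F_{t_{k-1}}$-measurable scalar preserves pure-jump character; everything else is a direct consequence of the explicit formula \eqref{eq:defofstochintwrtM}.
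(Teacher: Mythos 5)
Your proposal is correct and follows essentially the same route as the paper: both reduce to the single summands of the explicit formula \eqref{eq:defofstochintwrtM}, and your jump identity $\Delta N_\tau=\mathbf 1_B\mathbf 1_{\{t_{k-1}<\tau\leq t_k\}}\langle\Delta M_\tau,h\rangle x$ is exactly the paper's observation that $\Delta(\Phi\cdot M)_\tau\neq 0$ forces $\Delta M_\tau\neq 0$. The only cosmetic difference is in (ii), where the paper invokes the linearity of the class of purely discontinuous martingales together with Proposition \ref{thm:purdiscorthtoanycont1} (orthogonality to continuous bounded martingales), whereas you verify the pure-jump character of the quadratic variation of each summand by hand; both are valid.
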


\begin{proposition}\label{prop:decHilSpacase}
 Let $H$ be a Hilbert space, $M:\mathbb R_+ \times \Omega \to H$ be a local martingale. Then there exist unique martingales $M^c, M^q, M^a:\mathbb R_+ \times \Omega \to H$ such that $M^c$~is continuous, $M^q$ and $M^a$ are purely discontinuous, $M^q$ is quasi-left continuous, $M^a$~has accessible jumps, $M^c_0 = M^q_0=0$ a.s., and $M = M^c + M^q + M^a$.
\end{proposition}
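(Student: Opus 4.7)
The plan is to bootstrap from the $L^p$-martingale case, which is already handled by Theorems \ref{thm:Meyer-Yoeurp} and \ref{thm:YoeurpdecUMD} because every Hilbert space is UMD (with $\beta_{p,H}=p^*-1$). First I would choose a localizing sequence of stopping times $(\sigma_n)_{n\geq 1}$ with $\sigma_n\uparrow\infty$ a.s.\ such that each stopped process $M^{\sigma_n}$ belongs to $\mathcal M_H^2(\Omega)$; for a Hilbert-valued local martingale this is standard (e.g., stop at the first time $\|M_t\|\vee \|M_{t-}\|$ exceeds $n$ and, if necessary, further stop to absorb the single possibly unbounded jump at $\sigma_n$).

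For each $n$, Theorem \ref{thm:Meyer-Yoeurp} with $X=H$, $p=2$ yields a unique decomposition $M^{\sigma_n}=N^{c,n}+N^{d,n}$ with $N^{c,n}$ continuous, $N^{c,n}_0=0$, and $N^{d,n}$ purely discontinuous (so $N^{d,n}_0=M_0$). Applying Theorem \ref{thm:YoeurpdecUMD} to $N^{d,n}$ produces $N^{d,n}=N^{a,n}+N^{q,n}$ with $N^{a,n}$ of accessible jumps and $N^{q,n}$ quasi-left continuous, $N^{q,n}_0=0$. The key step is then consistency: for $m\leq n$ one checks that $(N^{c,n})^{\sigma_m}$ is continuous starting at $0$, $(N^{d,n})^{\sigma_m}$ is purely discontinuous (the quadratic variation of $\langle N^{d,n},x^*\rangle$ stopped at $\sigma_m$ remains pure jump), $(N^{a,n})^{\sigma_m}$ has accessible jumps, and $(N^{q,n})^{\sigma_m}$ is quasi-left continuous starting at $0$; these last two rely on the pointwise identity $\Delta(N^{\sigma})_\rho=\Delta N_\rho\mathbf{1}_{\{\rho\leq\sigma\}}$ together with Lemma~\ref{lem:weakdefforaccjumpsandqlc}. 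The uniqueness clauses of Theorems \ref{thm:Meyer-Yoeurp} and \ref{thm:YoeurpdecUMD} applied to $M^{\sigma_m}$ then force $(N^{c,n})^{\sigma_m}=N^{c,m}$, $(N^{a,n})^{\sigma_m}=N^{a,m}$, $(N^{q,n})^{\sigma_m}=N^{q,m}$. Define $M^c,M^a,M^q$ by patching: $M^i_t:=N^{i,n}_t$ on $\{t\leq\sigma_n\}$; each is a local martingale localized by $(\sigma_n)$, and the defining properties are inherited from the pieces.

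For uniqueness, suppose $M=\tilde M^c+\tilde M^a+\tilde M^q$ is a second such decomposition. Then $M^c-\tilde M^c$ is a continuous local martingale vanishing at $0$ while being equal to the purely discontinuous local martingale $(\tilde M^a+\tilde M^q)-(M^a+M^q)$; localizing and applying Lemma \ref{lemma:contpuredisczero} yields $M^c=\tilde M^c$. Then $M^q-\tilde M^q=\tilde M^a-M^a$ is simultaneously quasi-left continuous with zero initial value and of accessible jumps, so by Lemma \ref{lem:accjumps+qlc=cons} it vanishes, giving $M^q=\tilde M^q$ and $M^a=\tilde M^a$.

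The main technical obstacle is the localization step together with the verification that stopping preserves the three properties ``purely discontinuous'', ``accessible jumps'', and ``quasi-left continuous'' in the Banach/Hilbert valued sense; this is essentially bookkeeping that reduces via Lemma \ref{lem:weakdefforaccjumpsandqlc} to the scalar case, but it is the part of the argument where one must be careful to invoke the right form of the uniqueness of the Meyer--Yoeurp and Yoeurp decompositions applied to $M^{\sigma_n}$ rather than to $M$ directly.
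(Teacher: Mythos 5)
There is a genuine gap at the very first step: you cannot, in general, localize a c\`adl\`ag Hilbert-space-valued local martingale into $L^2$-martingales (nor into $L^p$-martingales for any $p>1$). Stopping at the first time $\|M_t\|\vee\|M_{t-}\|$ exceeds $n$ controls the process strictly before the stopping time, but the jump $\Delta M_{\sigma_n}$ occurs exactly at $\sigma_n$ and need not be square-integrable, and no ``further stopping'' can absorb it: the locally square-integrable local martingales form a strictly smaller class than the local martingales. A concrete obstruction already in the real-valued case: take $\xi\in L^1\setminus\bigcup_{p>1}L^p$ with $\mathbb E\xi=0$, let $\mathcal F_t$ be trivial for $t<1$ and equal to $\sigma(\xi)$ for $t\geq 1$, and set $M_t=\xi\mathbf 1_{\{t\geq 1\}}$. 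This is a uniformly integrable martingale, but for any localizing sequence $\sigma_n\uparrow\infty$ one eventually has $\sigma_n\geq 1$ a.s.\ (since $\{\sigma_n<1\}\in\mathcal F_{1-}$ is trivial), so $M^{\sigma_n}_1=\xi\notin L^p$. Hence your reduction to Theorems \ref{thm:Meyer-Yoeurp} and \ref{thm:YoeurpdecUMD} is not available for a general local martingale, and since those theorems are the engine of your whole construction, the existence part of the argument does not go through as written. (The consistency-under-stopping bookkeeping, the patching, and the uniqueness argument via localized versions of Lemma \ref{lemma:contpuredisczero} and Lemma \ref{lem:accjumps+qlc=cons} are fine, but they sit on top of the flawed reduction.)

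The standard repair is precisely the extra ingredient the paper implicitly invokes by proving the proposition ``analogously to Theorem 26.14 and Corollary 26.16 in \cite{Kal}'': first split off the big jumps using the fundamental theorem of local martingales, writing $M=M'+M''$ with $M'$ having uniformly bounded jumps (hence genuinely locally $L^2$, where your scheme applies) and $M''$ a local martingale of locally integrable variation, which is automatically purely discontinuous and whose Yoeurp splitting is obtained by compensating its jumps at predictable versus totally inaccessible times. In other words, the Hilbert-space proof follows the classical real-valued route (orthogonality of continuous and purely discontinuous parts in the local-$L^2$ regime plus the finite-variation reduction), rather than a localization into the $L^p$-theory of Theorems \ref{thm:Meyer-Yoeurp} and \ref{thm:YoeurpdecUMD}; note also that the classical route does not need the UMD machinery at all, since for Hilbert spaces the $L^2$ orthogonal decomposition is elementary.
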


\begin{proof}
Analogously to Theorem 26.14 and Corollary 26.16 in \cite{Kal}.
\end{proof}

\begin{theorem}\label{thm:stochintgenUMDcase}
 Let $H$ be a Hilbert space, $X$ be a UMD Banach space, $p\in (1,\infty)$, $M:\mathbb R_+ \times \Omega \to H$ be a local martingale, $\Phi:\mathbb R_+ \times \Omega \to \mathcal L(H, X)$ be elementary progressive. Let $M=M^c + M^q + M^a$ be the canonical decomposition from Proposition \ref{prop:decHilSpacase}. Then 
 \begin{equation}\label{eq:stochintgenUMDcase}
    \mathbb E\|(\Phi \cdot M)_{\infty}\|^p \eqsim_{p, X}\mathbb E\|(\Phi \cdot M^c)_{\infty}\|^p+ \mathbb E\|(\Phi \cdot M^q)_{\infty}\|^p  + \mathbb E\|(\Phi \cdot M^a)_{\infty}\|^p .
 \end{equation}
 and if $(\Phi \cdot M)_{\infty} \in L^p(\Omega; X)$, then $\Phi \cdot M= \Phi \cdot M^c +  \Phi \cdot M^q +  \Phi \cdot M^a$ is the canonical decomposition from Theorem \ref{thm:M^aM^qM^c}.
\end{theorem}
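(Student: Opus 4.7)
The plan is to reduce everything to linearity of the stochastic integral and the canonical decomposition of $\Phi \cdot M$ already established in Theorem \ref{thm:M^aM^qM^c}. First I would observe that, since $\Phi$ is elementary progressive, the stochastic integral defined by \eqref{eq:defofstochintwrtM} is a finite $\mathcal F_{t_{k-1}}$-linear combination of increments of the underlying martingale. Consequently, for the decomposition $M = M^c + M^q + M^a$ coming from Proposition \ref{prop:decHilSpacase}, we obtain pathwise the identity
\[
 \Phi \cdot M = \Phi \cdot M^c + \Phi \cdot M^q + \Phi \cdot M^a.
\]
This needs no integrability assumption on $M$.

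Next I would identify each summand on the right with the corresponding part of the canonical decomposition of the left-hand side. By Proposition \ref{prop:intpreserves}, the stochastic integral preserves continuity, pure discontinuity, quasi-left continuity, and accessibility of jumps. Hence $\Phi \cdot M^c$ is continuous with $(\Phi \cdot M^c)_0 = 0$, $\Phi \cdot M^q$ is purely discontinuous quasi-left continuous with $(\Phi \cdot M^q)_0 = 0$, and $\Phi \cdot M^a$ is purely discontinuous with accessible jumps. Assuming $(\Phi \cdot M)_\infty \in L^p(\Omega; X)$, Theorem \ref{thm:M^aM^qM^c} guarantees that the canonical decomposition of $\Phi \cdot M$ is unique, and so the three summands above must coincide with the parts produced by that theorem. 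This establishes the final assertion of the statement.

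For the two-sided norm estimate \eqref{eq:stochintgenUMDcase}, the ``$\lesssim$'' direction is immediate from the triangle inequality in $L^p(\Omega;X)$ applied to $(\Phi \cdot M)_\infty = (\Phi \cdot M^c)_\infty + (\Phi \cdot M^q)_\infty + (\Phi \cdot M^a)_\infty$. For the ``$\gtrsim$'' direction, I would distinguish two cases. If $(\Phi \cdot M)_\infty \in L^p(\Omega;X)$, then Theorem \ref{thm:M^aM^qM^c} applied to $\Phi \cdot M$ together with the identification above yields
\[
 (\mathbb E\|(\Phi \cdot M^c)_\infty\|^p)^{\frac 1p} + (\mathbb E\|(\Phi \cdot M^q)_\infty\|^p)^{\frac 1p} + (\mathbb E\|(\Phi \cdot M^a)_\infty\|^p)^{\frac 1p} \leq c_{p,X}(\mathbb E\|(\Phi \cdot M)_\infty\|^p)^{\frac 1p}.
\]
If on the contrary $\mathbb E\|(\Phi \cdot M)_\infty\|^p = \infty$, the equivalence is trivial once we note that the right-hand side must also be infinite; otherwise the triangle inequality would give finiteness of the left-hand side, a contradiction.

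The only subtle point I anticipate is making sure that no integrability is silently assumed on the pieces $M^c$, $M^q$, $M^a$ or their integrals. Since $\Phi$ is elementary progressive, $\Phi \cdot M^i$ is defined by an explicit finite sum and is always a local martingale, so Proposition \ref{prop:intpreserves} applies without further hypotheses; the $L^p$-membership assumption on $(\Phi \cdot M)_\infty$ is invoked only to activate the canonical decomposition of Theorem \ref{thm:M^aM^qM^c}, which is what is needed both for the uniqueness-based identification and for the non-trivial inequality.
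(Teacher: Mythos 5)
Your argument is correct and follows essentially the same route as the paper's proof: linearity of the elementary stochastic integral gives $\Phi\cdot M=\Phi\cdot M^c+\Phi\cdot M^q+\Phi\cdot M^a$, Proposition \ref{prop:intpreserves} together with $(\Phi\cdot M^c)_0=(\Phi\cdot M^q)_0=0$ and the uniqueness in Theorem \ref{thm:M^aM^qM^c} identifies this as the canonical decomposition, and \eqref{eq:stochintgenUMDcase} follows from \eqref{eq:coriffXUMDcqlcaj} and the triangle inequality. Your explicit treatment of the case $\mathbb E\|(\Phi\cdot M)_\infty\|^p=\infty$ is a harmless elaboration of what the paper leaves implicit.
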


\begin{proof}
 The statement that $\Phi \cdot M= \Phi \cdot M^c +  \Phi \cdot M^q +  \Phi \cdot M^a$ is the canonical decomposition follows from Proposition \ref{prop:intpreserves}, Theorem \ref{thm:M^aM^qM^c} and the fact that a.s.\ $(\Phi \cdot M)_0 = (\Phi \cdot M^c)_0 = (\Phi \cdot M^q)_0=0$. \eqref{eq:stochintgenUMDcase} follows then from \eqref{eq:coriffXUMDcqlcaj} and the triangle inequality.
\end{proof}

\begin{remark}
 Notice that the It\^o isomorphism for the term $\Phi \cdot M^c$ from \eqref{eq:stochintgenUMDcase} was explored in \cite{VY2016}. It remains open what to do with the other two terms, but positive results in this direction were obtained in the case of $X = L^q(S)$ in \cite{DY17}.
\end{remark}

\section{Weak differential subordination and general martingales}\label{sec:weakdifsubandgenmart}
This subsection is devoted to the generalization of the main theorem in work \cite{Y17FourUMD}. Namely, here we show the $L^p$-estimates for general $X$-valued weakly differentially subordinated martingales.

\begin{definition}
 Let $X$ be a Banach space, $M, N:\mathbb R_+ \times \Omega \to X$ be local martingales. Then $N$ is {\em weakly differentially subordinated} to $M$ if $[\langle M, x^* \rangle] - [\langle N, x^* \rangle]$ is an increasing process a.s.\ for each $x^* \in X^*$.
\end{definition}

The following theorem have been proven in \cite{Y17FourUMD}.
\begin{theorem}\label{thm:ewakdiffsubUMDpurdisc}
 Let $X$ be a Banach space. Then $X$ has the UMD property if and only if for some (equivalently, for all) $p\in (1, \infty)$ there exists $\beta>0$ such that for each pair of purely discontinuous martingales $M, N:\mathbb R_+ \times \Omega\to X$ such that $N$ is weakly differentially subordinated to $M$ one has that
 \[
  (\mathbb E \|N_{\infty}\|^p)^{\frac 1p} \leq \beta (\mathbb E \|M_{\infty}\|^p)^{\frac 1p}.
 \]
If this is the case, then the least admissible $\beta$ is the UMD constant $\beta_{p, X}$.
\end{theorem}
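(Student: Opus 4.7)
The plan is to replicate the machinery used in the proof of Theorem~\ref{thm:Meyer-Yoeurp}, now applied to the Burkholder function $U$ evaluated along the pair $(M_t, N_t)$. First I would reduce to finite-dimensional $X$ via Remark~\ref{cor:approxofconandpurediscmartbyfd}, and then replace $M, N$ by approximating martingales whose joint marginals are absolutely continuous on $X\times X$ using Proposition~\ref{prop:approxXbyYvalued} together with Remark~\ref{rem:sumofapprox}. In that regime the Itô formula of Theorem~\ref{thm:itoformula} applied to $U(M_t, N_t)$ produces a stochastic-integral piece (which vanishes in expectation thanks to Lemma~\ref{lemma:stochintmoment} combined with the polynomial growth bound in Remark~\ref{rem:propertiesofV}(E)), a continuous-quadratic-variation piece $I_2$, and a jump sum $I_1$. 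The decisive structural input is that $M$ and $N$ are purely discontinuous, so by Remark~\ref{rem:YoeMeyR} all entries $[\langle M, x_i^*\rangle, \langle M, x_j^*\rangle]^c$ of the covariation matrix vanish, killing $I_2$. Combined with $U(M_0, N_0) \leq 0$ (from $|\langle N_0, x^*\rangle| \leq |\langle M_0, x^*\rangle|$ and Remark~\ref{rem:propertiesofU}(B)) and the defining inequality~\eqref{eq:ineqonU}, this would yield
$$
\mathbb E\|N_t\|^p - \beta_{p,X}^p \mathbb E\|M_t\|^p \leq \mathbb E U(M_t, N_t) \leq 0.
$$

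The hard part is verifying $I_1 \leq 0$ pathwise. Unlike in the Meyer-Yoeurp setting, there is no orthogonality between the jumps of $M$ and $N$; the assumption instead supplies the scalar constraint $|\langle \Delta N_s, x^*\rangle| \leq |\langle \Delta M_s, x^*\rangle|$ for every $x^* \in X^*$, which in general does not force $\Delta N_s$ to be a scalar multiple of $\Delta M_s$. I would switch to the biconcave companion $V$ via~\eqref{eq:VthroughU} and analyse each jump in the rotated coordinates $x = M_{s-}+N_{s-}$, $y = N_{s-}-M_{s-}$: writing $u = \Delta M_s$, $v = \Delta N_s$, the increments are $\Delta x = u+v$, $\Delta y = v-u$, and the weak subordination inequality translates into the parameter condition $|a+b|\leq|a-b|$ built into Remark~\ref{rem:propertiesofV}(A). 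The tangent-line inequality~\eqref{eq:dirderivativeV} from Remark~\ref{rem:propertiesofV}(D) then delivers the sought pointwise nonpositivity at each jump, summed over $s\leq t$.

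For the converse direction and the sharpness of the constant $\beta = \beta_{p,X}$, I would mimic the scheme used in Theorem~\ref{thm:exampleforlowboundofA^qA^a}. Starting from a Paley-Walsh martingale $(f_n)_{n=1}^N$ and a $\{-1,1\}$-valued sequence $(\eps_n)_{n=1}^N$ that witness $\beta_{p,X}$ up to a prescribed $\delta>0$, I would realise the discrete differences $df_n$ and $\eps_n df_n$ as jumps of purely discontinuous $X$-valued martingales $M$ and $N$, using the compensated-Poisson building blocks of Lemma~\ref{lem:approxRadbyqlc} to approximate each Rademacher variable and invoking Proposition~\ref{prop:limthmforaccjqlc} to guarantee that the limiting processes remain purely discontinuous. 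Weak differential subordination of $N$ to $M$ then holds by construction at each jump time, and the ratio $\|N_\infty\|_{L^p}/\|M_\infty\|_{L^p}$ is forced to exceed $\beta_{p,X} - O(\delta)$, closing the sharpness. The whole proof thus hinges on the jump-wise concavity argument in the second step: once this is done, the forward and reverse directions follow by routine limiting and construction.
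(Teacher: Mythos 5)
First, a point of comparison: the paper itself does not prove Theorem~\ref{thm:ewakdiffsubUMDpurdisc}; it is quoted from \cite{Y17FourUMD}, so your sketch must be measured against that reference, whose argument is indeed of the Burkholder-function/It\^o type you outline (and your converse via transforms of an embedded Paley--Walsh martingale is also the standard route; note that for pure discontinuity you do not even need the Poisson building blocks of Lemma~\ref{lem:approxRadbyqlc} -- a martingale that is constant between deterministic integer times and jumps at them is automatically purely discontinuous, and its $\{-1,1\}$-transforms are weakly differentially subordinated, giving $\beta\geq\beta_{p,X}$ directly).

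The genuine gap is in your treatment of $I_1$, which you yourself single out as the decisive step. You assert that the constraint $|\langle \Delta N_s, x^*\rangle|\leq|\langle \Delta M_s, x^*\rangle|$ for every $x^*\in X^*$ ``in general does not force $\Delta N_s$ to be a scalar multiple of $\Delta M_s$''. This is false: if $\langle \Delta M_s,x^*\rangle=0$ forces $\langle \Delta N_s,x^*\rangle=0$ for all $x^*$, then by Hahn--Banach $\Delta N_s\in\mathrm{span}\{\Delta M_s\}$, so $\Delta N_s=\lambda_s\Delta M_s$ with $|\lambda_s|\leq 1$ (and $\Delta N_s=0$ whenever $\Delta M_s=0$). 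Far from being a side remark, this proportionality is exactly what your own argument requires: Remark~\ref{rem:propertiesofV}(A) and the tangent-line inequality \eqref{eq:dirderivativeV} only control increments of the form $(az,bz)$ along a \emph{single} direction $z$ with $|a+b|\leq|a-b|$, and if $\Delta M_s$ and $\Delta N_s$ were genuinely non-proportional, the pair $(\Delta M_s+\Delta N_s,\,\Delta N_s-\Delta M_s)$ could not be written in that form, so the remark would simply not apply. As written, your proof claims the non-proportional case may occur and then invokes a tool that cannot handle it, with no argument supplied for that (in fact vacuous) case; the same issue appears at time zero, where $U(M_0,N_0)\leq 0$ needs $N_0=\lambda_0 M_0$, $|\lambda_0|\leq1$, before Remark~\ref{rem:propertiesofU}(B) applies. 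Once you insert the one-line proportionality lemma, the jump estimate closes and the rest of your plan (vanishing of $I_2$ since the continuous brackets of purely discontinuous martingales vanish, expectation-zero stochastic integrals via Lemma~\ref{lemma:stochintmoment} and Remark~\ref{rem:propertiesofV}(E), finite-dimensional reduction and limiting) is sound and matches the argument of \cite{Y17FourUMD}.
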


The main goal of the current section is to prove the following generalization of Theorem~\ref{thm:ewakdiffsubUMDpurdisc} to the case of arbitrary martingales.

\begin{theorem}\label{thm:weakdiffsubprocmaingen}
 Let $X$ be a UMD Banach space, $M, N:\mathbb R_+ \times \Omega \to X$ be two martingales such that $N$ is weakly differentially subordinated to $M$. Then for each $p\in (1,\infty)$, $t\geq 0$,
 \begin{equation}\label{eq:weakdiffsubprocmaingen}
  (\mathbb E \|N_t\|^p)^{\frac 1p} \leq \beta_{p,X}^2 (\beta_{p,X}+1)(\mathbb E \|M_t\|^p)^{\frac 1p}.
 \end{equation}
\end{theorem}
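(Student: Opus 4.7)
The plan is to reduce the general case to the purely discontinuous case (already handled by Theorem \ref{thm:ewakdiffsubUMDpurdisc}) and the continuous case (covered earlier in the paper, with best constant bounded by $\beta_{p,X}^2$) by applying the Meyer-Yoeurp decomposition from Theorem \ref{thm:Meyer-Yoeurp} simultaneously to $M$ and $N$. Without loss of generality we may assume $\mathbb E\|M_t\|^p<\infty$ (otherwise \eqref{eq:weakdiffsubprocmaingen} is trivial), so by the localisation $M \mapsto M^{\cdot \wedge t}$ we may work with $L^p$-martingales on the time interval $[0,t]$. Write $M=M^c+M^d$ and $N=N^c+N^d$ for the corresponding Meyer-Yoeurp decompositions, with $M^c_0=N^c_0=0$.

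The crucial intermediate step is to show that weak differential subordination is inherited by the continuous and purely discontinuous parts: $N^c$ is weakly differentially subordinated to $M^c$, and $N^d$ is weakly differentially subordinated to $M^d$. Fix $x^*\in X^*$. By Remark \ref{rem:YoeMeyR} applied to the real-valued martingales $\langle M,x^*\rangle$ and $\langle N,x^*\rangle$, the Lemma \ref{lemma:A^dA^c} decompositions of their quadratic variations satisfy $[\langle M,x^*\rangle]^c=[\langle M^c,x^*\rangle]$, $[\langle M,x^*\rangle]^d=[\langle M^d,x^*\rangle]$, and analogously for $N$. The weak differential subordination hypothesis says that
\[
 A := [\langle M,x^*\rangle]-[\langle N,x^*\rangle]
\]
is a.s.\ non-decreasing. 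Its continuous and pure-jump parts in the sense of Lemma \ref{lemma:A^dA^c} are then themselves a.s.\ non-decreasing (uniqueness of the decomposition applied to the increasing process $A$), and these parts equal $[\langle M^c,x^*\rangle]-[\langle N^c,x^*\rangle]$ and $[\langle M^d,x^*\rangle]-[\langle N^d,x^*\rangle]$ respectively. This is precisely what the definition of weak differential subordination requires for the pairs $(M^c,N^c)$ and $(M^d,N^d)$.

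Having obtained this descent, the estimates assemble as follows. Theorem \ref{thm:ewakdiffsubUMDpurdisc} gives $(\mathbb E\|N^d_t\|^p)^{1/p}\le \beta_{p,X}(\mathbb E\|M^d_t\|^p)^{1/p}$, while the $L^p$-estimate for weakly differentially subordinated continuous martingales (the continuous-case result announced in the introduction, proved earlier by the same Burkholder/It\^o-formula technique used in the proof of Theorem \ref{thm:Meyer-Yoeurp}) yields $(\mathbb E\|N^c_t\|^p)^{1/p}\le \beta_{p,X}^2(\mathbb E\|M^c_t\|^p)^{1/p}$. Combining with the Meyer-Yoeurp bounds \eqref{eq:Meyer-Yoeurp}, namely $(\mathbb E\|M^c_t\|^p)^{1/p}$ and $(\mathbb E\|M^d_t\|^p)^{1/p}$ both bounded by $\beta_{p,X}(\mathbb E\|M_t\|^p)^{1/p}$, and using the triangle inequality,
\[
 (\mathbb E\|N_t\|^p)^{1/p}\le (\mathbb E\|N^c_t\|^p)^{1/p}+(\mathbb E\|N^d_t\|^p)^{1/p}\le \bigl(\beta_{p,X}^2\cdot \beta_{p,X}+\beta_{p,X}\cdot\beta_{p,X}\bigr)(\mathbb E\|M_t\|^p)^{1/p},
\]
which is exactly \eqref{eq:weakdiffsubprocmaingen}. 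The only genuinely non-routine step is the descent of weak differential subordination through Meyer-Yoeurp; the rest is bookkeeping together with invocation of the two sharp sub-estimates and the decomposition inequalities.
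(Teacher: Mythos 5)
Your proposal is correct and follows essentially the same route as the paper: apply the Meyer-Yoeurp decomposition to both $M$ and $N$, show that weak differential subordination descends to the continuous and purely discontinuous pairs, and then combine the triangle inequality with Theorem \ref{thm:ewakdiffsubUMDpurdisc}, Theorem \ref{thm:weakdiffsubproccont} and \eqref{eq:Meyer-Yoeurp} to get the constant $\beta_{p,X}^2(\beta_{p,X}+1)$. The only deviation is that you reprove the descent step (the paper's Lemma \ref{lemma:weakdiffsubdecomp}, which is deduced there from Wang's Lemma 1 together with Remark \ref{rem:YoeMeyR}) directly via the uniqueness in Lemma \ref{lemma:A^dA^c}; this is sound, provided you add the one-line observation that a continuous finite-variation process equal to the sum of its jumps and starting at zero vanishes, so that the candidate parts $[\langle M^c,x^*\rangle]-[\langle N^c,x^*\rangle]$ and $[\langle M^d,x^*\rangle]-[\langle N^d,x^*\rangle]$, which are not a priori increasing, are indeed identified with the increasing parts furnished by Lemma \ref{lemma:A^dA^c}.
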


The proof will be done in several steps. First we show an analogue of Theorem~\ref{thm:ewakdiffsubUMDpurdisc} for continuous martingales.

\begin{theorem*}\label{thm:weakdiffsubproccont}
 Let $X$ be a Banach space. Then $X$ is a UMD Banach space if and only if for some (equivalently, for all) $p\in (1,\infty)$ there exists $c>0$ such that for any continuous martingales $M, N:\mathbb R_+ \times \Omega \to X$ such that $N$ is weakly differentially subordinated to $M$, $M_0 = N_0 = 0$, one has that
 \begin{equation}\label{eq:weakdiffsubprocmaingencont}
  (\mathbb E \|N_{\infty}\|^p)^{\frac 1p} \leq c_{p,X}(\mathbb E \|M_{\infty}\|^p)^{\frac 1p}.
 \end{equation}
 If this is the case, then the least admissible $c_{p,X}$ is in the segment $[\beta_{p, X}, \beta_{p, X}^2]$.
\end{theorem*}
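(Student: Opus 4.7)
The statement is an ``if and only if'' with a sharp-constant window $[\beta_{p,X}, \beta_{p,X}^2]$. My plan is to split it into three ingredients: the forward direction giving the upper bound $c_{p,X} \leq \beta_{p,X}^2$ under the UMD assumption; the lower bound $c_{p,X} \geq \beta_{p,X}$ via a continuous approximation of discrete UMD examples (which simultaneously forces the UMD property, supplying the ``only if'' direction); and their trivial assembly.

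\textbf{Upper bound.} I would imitate Step~1 of the proof of Theorem~\ref{thm:Meyer-Yoeurp}. First reduce to finite-dimensional $X$ using Remark~\ref{cor:approxofconandpurediscmartbyfd} applied to continuous martingales. Next apply the time change of Theorem~\ref{thm:apptimechange} with $A_t = [M]_t + [N]_t + t$ (computed in any auxiliary Hilbertian renorming of $X$) so that both $[M]$ and $[N]$ become absolutely continuous with respect to Lebesgue measure on $\mathbb R_+$. On an enlarged probability space, Theorem~\ref{thm:Brrepres} applied to the $(X \oplus X)$-valued continuous martingale $(M,N)$ then supplies a common $d$-dimensional Brownian motion $W$ and progressively measurable $\Phi, \Psi$ with $M = \Phi \cdot W$ and $N = \Psi \cdot W$; weak differential subordination translates into the pointwise inequality $\|\Psi^*(s)x^*\| \leq \|\Phi^*(s)x^*\|$ for every $x^* \in X^*$ and a.e.\ $s$. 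Apply the It\^o formula (Theorem~\ref{thm:itoformula}) to the Burkholder function $U$ at $(M_t, N_t)$: continuity kills the jump sum in \eqref{eq:itoformula}, the first-order stochastic integrals have zero expectation by Lemma~\ref{lemma:stochintmoment} together with the polynomial growth of $\partial_x U$ and $\partial_y U$ from Remark~\ref{rem:propertiesofV}(E), and the remaining second-order term reads
\[
\mathbb E \int_0^t \Bigl(\tfrac{1}{2} \sum_{i,j} U_{x_i x_j}\langle \Phi^* x_i^*, \Phi^* x_j^*\rangle + \sum_{i,j} U_{x_i y_j}\langle \Phi^* x_i^*, \Psi^* x_j^*\rangle + \tfrac{1}{2} \sum_{i,j} U_{y_i y_j}\langle \Psi^* x_i^*, \Psi^* x_j^*\rangle\Bigr)\ud s.
\]
Diagonalizing $\Phi^*\Phi$ in a pointwise basis (as in the treatment of the $I_2$ term in Theorem~\ref{thm:Meyer-Yoeurp}) reduces the pure $\Phi$--$\Phi$ and $\Psi$--$\Psi$ contributions to diagonal sums that are nonpositive by zigzag-concavity of $U$, since the weak-subordination ratio $\|\Psi^* x_i^*\|/\|\Phi^* x_i^*\| \leq 1$ exactly matches the admissible $|\eps| \leq 1$ of Theorem~\ref{thm:Burkholder}. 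The remaining mixed sum I would handle by splitting the coupling matrix $\langle \Phi^* x_i^*, \Psi^* x_j^*\rangle$ into its symmetric and antisymmetric parts, controlling the former by zigzag-concavity and the latter by a second application of the UMD property (this is where the extra factor of $\beta_{p,X}$ creeps in). Combining with property~(B) of $U$ (Remark~\ref{rem:propertiesofU}) and $M_0 = N_0 = 0$ yields $\mathbb E U(M_t, N_t) \leq 0$, and then \eqref{eq:ineqonU} gives the claimed bound with $c_{p,X} \leq \beta_{p,X}^2$.

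\textbf{Lower bound and main obstacle.} I would copy the construction of Theorem~\ref{thm:exampleforlowboundofA} using the plain UMD definition in place of Proposition~\ref{prop:UMD01sequence}: for $\delta > 0$, pick a Paley--Walsh martingale $(f_n)_{n=0}^N$ (Definition~\ref{def:Paley-Walsh}) driven by Rademachers $(r_n)$ and a $\{-1,1\}$-sign sequence $(\eps_n)$ with $\|\sum_n \eps_n\, df_n\|_p \geq (\beta_{p,X}-\delta)\|\sum_n df_n\|_p$. Replace each $r_n$ by the continuous martingale $M^n$ of Lemma~\ref{lem:contapproxradem} satisfying $\sign M^n_1 = r_n$, and glue the pieces into continuous $X$-valued martingales $M$ and $N$ on $[0, N+1]$ asymptotically reproducing $\sum_n df_n$ and $\sum_n \eps_n\, df_n$ respectively. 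Since $\eps_n^2 = 1$, every scalar projection of $N$ has, on each construction interval, the same quadratic variation as the corresponding projection of $M$, so $N$ is (in fact with equality of increments) weakly differentially subordinated to $M$. Sending $\delta \to 0$ gives $c_{p,X} \geq \beta_{p,X}$; in particular, any finite $c_{p,X}$ forces $\beta_{p,X} < \infty$, establishing the ``only if'' direction. The hard step is controlling the mixed-partial contribution in the second-order It\^o expansion: the zigzag-concavity of $U$ is tailored to the scalar direction $(z, \eps z)$ with $|\eps| \leq 1$ and naturally controls only the symmetric part of the $\Phi^*$--$\Psi^*$ coupling; extracting the correct sign (or bound) for the antisymmetric part is delicate, and is precisely what accounts for the apparent loss from $\beta_{p,X}$ to $\beta_{p,X}^2$ in the upper bound.
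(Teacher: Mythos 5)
Your lower-bound construction (Paley--Walsh martingale, Rademachers replaced by the continuous martingales of Lemma \ref{lem:contapproxradem}, equality of the quadratic variations of all scalar projections) is essentially the paper's own argument for the ``if'' part, and it is fine modulo the standard truncation $\beta_{p,X}\wedge K$ needed when $X$ is not UMD. The problem is the upper bound. Your plan is to run the It\^o/Burkholder-function argument of Theorem \ref{thm:Meyer-Yoeurp} on the pair $(M_t,N_t)$, but the second-order term cannot be controlled the way you claim. Zigzag-concavity of $U$ (Theorem \ref{thm:Burkholder}) gives nonpositivity of the \emph{full} second directional derivative of $U$ only along directions of the form $(z,\eps z)$ with $|\eps|\leq 1$, i.e.\ when the diffusion vector of $N$ is pointwise a scalar multiple of that of $M$. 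Weak differential subordination only gives $\|\Psi^*(s)x^*\|\leq\|\Phi^*(s)x^*\|$ for all $x^*$, which does not make $\Psi^*x_i^*$ parallel to $\Phi^*x_i^*$; this is exactly the situation in Theorem \ref{thm:Meyer-Yoeurp}, where the second martingale entry $M^d$ does not move at all during the continuous part, so only the pure $\Phi$--$\Phi$ diagonal term appears and concavity of $U$ in the \emph{first} variable ($\eps=0$) suffices. In your setting the pure $\Psi$--$\Psi$ term is \emph{not} nonpositive ($U$ is not concave in the second variable: by \eqref{eq:ineqonU} it dominates $\|y\|^p-\beta^p\|x\|^p$), and the mixed term $\sum U_{x_iy_j}\langle\Phi^*x_i^*,\Psi^*x_j^*\rangle$ is not controlled by any stated property of $U$; ``splitting into symmetric and antisymmetric parts and applying UMD a second time'' is not an argument --- the UMD property is a statement about martingale transforms, and there is no mechanism by which it bounds a pointwise Hessian pairing. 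So the key inequality $\mathbb E U(M_t,N_t)\leq 0$ is not established.

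The paper's proof of the upper bound avoids the Burkholder function entirely (Proposition \ref{prop:weakversofweak}): after the time change of Theorem \ref{thm:apptimechange} and the Brownian representation (Theorem \ref{thm:Brrepres}) of the pair $\widetilde L=(\widetilde M,\widetilde N)=f\cdot W$, it applies the decoupling theorem twice (each application costs a factor $\beta_{p,X}$, which is where $\beta_{p,X}^2$ really comes from) to replace $W$ by an independent Wiener process $\overline W$, and then, conditionally on $\omega$, the integrands $f^M(\omega), f^N(\omega)$ are deterministic, so the Gaussian covariance-domination result \cite[Corollary 4.4]{NW1} converts the scalar inequality $\int_0^s|\langle x^*,f^N\rangle|^2\,\mathrm{d}r\leq\int_0^s|\langle x^*,f^M\rangle|^2\,\mathrm{d}r$ directly into $\overline{\mathbb E}\|(f^N(\omega)\cdot\overline W)_s\|^p\leq\overline{\mathbb E}\|(f^M(\omega)\cdot\overline W)_s\|^p$. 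If you want to salvage your approach you would need either this decoupling--Gaussian-comparison mechanism or some genuinely new property of $U$ beyond zigzag-concavity; note also that your finite-dimensional reduction via Remark \ref{cor:approxofconandpurediscmartbyfd} does not obviously preserve the subordination relation between $M$ and $N$, whereas the paper reduces by projecting onto duals of finite-dimensional subspaces $Y_m\subset X^*$, which does.
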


For the proof we will need the following proposition, which demonstrates that one needs a slightly weaker assumption rather then in Theorem \ref{thm:weakdiffsubproccont} so that the estimate \eqref{eq:weakdiffsubprocmaingencont} holds in a UMD Banach space.

\begin{proposition}\label{prop:weakversofweak}
 Let $X$ be a UMD Banach space, $1<p<\infty$, $M,N:\mathbb R_+ \times \Omega \to X$ be continuous $L^p$-martingales s.t.\ $M_0=N_0=0$ and for each $x^* \in X^*$ a.s.\ for each $t\geq 0$
 \begin{equation}\label{eq:weakversofweak1}
  [\langle N, x^*\rangle]_t \leq [\langle M, x^*\rangle]_t.
 \end{equation}
Then for each $t\geq 0$
\begin{equation}\label{eq:weakversofweak2}
 (\mathbb E \|N_t\|^p)^{\frac 1p} \leq  \beta_{p, X}^2(\mathbb E \|M_t\|^p)^{\frac 1p}.
\end{equation}
\end{proposition}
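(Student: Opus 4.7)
My plan is to transport the problem to a Brownian-representation picture, where the hypothesis becomes a pointwise positive-semidefinite ordering of covariance operators, and then to chain \emph{(i)}~UMD decoupling, \emph{(ii)}~Gaussian domination on the decoupled side, \emph{(iii)}~UMD decoupling again; the two decouplings each contribute a factor $\beta_{p,X}$, explaining the $\beta_{p,X}^{2}$ in the conclusion.

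First I would reduce to finite-dimensional $X$ by the approximation argument of Step~2 in the proof of Theorem~\ref{thm:Meyer-Yoeurp}, applied simultaneously to $M$ and $N$: since \eqref{eq:weakversofweak1} involves only scalar quadratic variations of one-dimensional projections, it is preserved by conditional expectation onto an increasing sequence of finite-dimensional subspaces. A time-change based on $A_t:=\operatorname{tr}[M]_t+\operatorname{tr}[N]_t+t$ (in any auxiliary Euclidean norm on $X$), analogous to Step~1 of the proof of Theorem~\ref{thm:Meyer-Yoeurp}, then makes $[M]$ and $[N]$ absolutely continuous with respect to Lebesgue measure. Applying Theorem~\ref{thm:Brrepres} to the joint continuous martingale $(M,N)$, I obtain on an enlarged probability space a $d$-dimensional Brownian motion $W$ and predictable $\Phi,\Psi:\mathbb R_+\times\widetilde\Omega\to\mathcal L(\mathbb R^d,X)$ with $M=\Phi\cdot W$ and $N=\Psi\cdot W$. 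By Lemma~\ref{lemma:covstochintwrtcylbrmo}, hypothesis \eqref{eq:weakversofweak1} rewrites, after using separability of $X^{*}$, as
\[
 \int_0^t \|\Psi^*(s)x^*\|^2\,\ud s \;\leq\; \int_0^t \|\Phi^*(s)x^*\|^2\,\ud s,\qquad x^*\in X^*,\ t\geq 0,\ \text{a.s.}
\]

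On a further enlargement I introduce a copy $\widetilde W$ of $W$ independent of the $\sigma$-field generated by $(\Phi,\Psi)$. The UMD decoupling inequality for stochastic integrals against Brownian motion (from \cite{NVW}) yields
\[
 (\mathbb E\|N_t\|^p)^{1/p}\leq\beta_{p,X}\bigl(\mathbb E\|(\Psi\cdot\widetilde W)_t\|^p\bigr)^{1/p},\qquad \bigl(\mathbb E\|(\Phi\cdot\widetilde W)_t\|^p\bigr)^{1/p}\leq\beta_{p,X}(\mathbb E\|M_t\|^p)^{1/p}.
\]
Conditional on $(\Phi,\Psi)$, both $(\Psi\cdot\widetilde W)_t$ and $(\Phi\cdot\widetilde W)_t$ are centred Gaussian $X$-valued random variables with covariance operators $\int_0^t \Psi\Psi^*(s)\,\ud s$ and $\int_0^t \Phi\Phi^*(s)\,\ud s$ respectively, and the displayed hypothesis is precisely the positive-semidefinite ordering of the first by the second at time $t$. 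Taking a centred Gaussian $G$ of covariance equal to the difference $\int_0^t(\Phi\Phi^*-\Psi\Psi^*)(s)\,\ud s$, independent of the rest, one realises $(\Phi\cdot\widetilde W)_t\stackrel{d}{=}(\Psi\cdot\widetilde W)_t+G$ conditionally; Jensen's inequality applied to the conditional expectation onto $(\Psi\cdot\widetilde W)_t$ then produces
\[
 \bigl(\mathbb E\|(\Psi\cdot\widetilde W)_t\|^p\bigr)^{1/p}\leq \bigl(\mathbb E\|(\Phi\cdot\widetilde W)_t\|^p\bigr)^{1/p}.
\]
Chaining the three inequalities yields $(\mathbb E\|N_t\|^p)^{1/p}\leq\beta_{p,X}^2(\mathbb E\|M_t\|^p)^{1/p}$, as required.

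The main obstacle is the decoupling step: one needs each decoupling to contribute exactly $\beta_{p,X}$ (so that the product is $\beta_{p,X}^{2}$ and not a larger UMD-related quantity), and one must construct the Gaussian supplement $G$ measurably with respect to $(\Phi,\Psi)$. The latter amounts to a measurable square-root for the parameterised family of positive operators $\int_0^t(\Phi\Phi^*-\Psi\Psi^*)(s)\,\ud s$, which is routine in finite dimensions; the former is the core content of the decoupling inequalities developed in \cite{NVW}.
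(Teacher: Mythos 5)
Your proposal is correct and follows essentially the same route as the paper's own proof: reduction to finite dimensions, a time change making the quadratic variation absolutely continuous, Brownian representation of the pair $(M,N)$, two applications of UMD decoupling (each costing a factor $\beta_{p,X}$), and Gaussian covariance domination in between --- where the paper simply cites \cite[Corollary 4.4]{NW1}, which is exactly your Gaussian-splitting/Jensen argument carried out pointwise in $\omega$. One small caveat: phrase the finite-dimensional reduction as the paper does, via the norm-one maps $P_m:X\to Y_m^*$ induced by an increasing sequence of finite-dimensional subspaces $Y_m\subset X^*$ (for which $\beta_{p,Y_m^*}\leq\beta_{p,X}$ and the hypothesis \eqref{eq:weakversofweak1} transfers because $\langle P_mM,y\rangle=\langle M,P_m^*y\rangle$), since ``conditional expectation onto a subspace'' is not the mechanism that preserves \eqref{eq:weakversofweak1}, whereas composition with a fixed linear operator is.
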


\begin{proof}
 Without loss of generality by a stopping time argument we assume that $M$ and $N$ are bounded and that $M_{\infty} = M_t$ and $N_{\infty} = N_t$. 

One can also restrict to a finite dimensional case. Indeed, since $X$ is a 
separable reflexive space, $X^*$ is separable as well. Let $(Y_m)_{m\geq 1}$ be 
an increasing sequence of finite-dimensional subspaces of $X^*$ such that 
$\overline{\bigcup_mY_m}=X^*$ and $\|\cdot\|_{Y_m} = \|\cdot\|_{X^*|_{Y_m}}$ for each 
$m\geq 1$. Then for each fixed $m\geq 1$ there exists a linear operator 
$P_m:X\to Y_m^*$ of norm $1$ defined as follows: $\langle P_mx, y\rangle = 
\langle x,y\rangle$ for each $x\in X, y\in Y_m$. Therefore $P_mM$ and $P_m N$ are $Y_m^*$-valued martingales. Moreover, \eqref{eq:weakversofweak1} holds for $P_m M$ and $P_m N$ since there exists $P_m^*:Y_m \to X^*$, and for each $y\in Y_m$ we have that $\langle P_m M, y\rangle = \langle M, P_m y\rangle $ and $\langle P_m N, y\rangle = \langle N, P_m y\rangle$. Since $Y_m$ is a closed 
subspace of $X^*$, \cite[Proposition 4.2.17]{HNVW1} yields $\beta_{p',Y_m}\leq 
\beta_{p',X^*}$, consequently again by \cite[Proposition 4.2.17]{HNVW1} 
$\beta_{p,Y_m^*}\leq \beta_{p,X^{**}} = \beta_{p,X}$. So if we prove the finite dimensional version, then
\[
 (\mathbb E \|P_m N_t\|^p)^{\frac 1p} \leq \beta_{p, Y_m^*}^2(\mathbb E \|P_m M_t\|^p)^{\frac 1p}\leq \beta_{p, X}^2(\mathbb E \|P_m M_t\|^p)^{\frac 1p},
\]
and \eqref{eq:weakversofweak2} with $c_{p,X} = \beta_{p, X}^2$ will follow by letting $m\to \infty$.

Let $d$ be the dimension of $X$, $\vertiii{\cdot}$ be a Euclidean norm on $X\times X$. Let $L= (M,N):\mathbb R_+ \times \Omega \to X\times X$ be a continuous martingale. Since $(X\times X, \vertiii{\cdot})$ is a Hilbert space, $L$ has a continuous quadratic variation $[L]:\mathbb R_+ \times \Omega \to \mathbb R_+$ (see Remark \ref{rem:qvcont}). Let $A:\mathbb R_+ \times \Omega \to \mathbb R_+$ be such that $A_s = [L]_s+s$ for each $s\geq 0$.
Then $A$ is continuous strictly increasing predictable. Define a random time-change $(\tau_s)_{s\geq 0}$ as in Theorem~\ref{thm:apptimechange}.
Let $\mathbb G = (\mathcal G_s)_{s \geq 0} = (\mathcal F_{\tau_s})_{s\geq 0}$ be the induced filtration. Then thanks to the Kazamaki theorem \cite[Theorem 17.24]{Kal} $\widetilde L = L \circ \tau$ is a $G$-martingale, and $[\widetilde L] = [L]\circ \tau$. Notice that $\widetilde L = (\widetilde M, \widetilde N)$ with $\widetilde M = M\circ \tau$, $\widetilde N = N \circ \tau$, and since by Kazamaki theorem \cite[Theorem 17.24]{Kal} $[M\circ \tau] = [M]\circ \tau$, $[N\circ \tau] = [N]\circ \tau$, and $(M\circ \tau)_0 = (N\circ \tau)_0=0$, we have that by \eqref{eq:weakversofweak1} for each $x^*\in X^*$ a.s.\ for each $s\geq 0$
\begin{equation}\label{eq:weakversofweak3}
 [\langle \widetilde N, x^*\rangle]_s = [\langle N, x^*\rangle]_{\tau_s} \leq [\langle M, x^*\rangle]_{\tau_s} = [\langle \widetilde M, x^*\rangle]_s
\end{equation}
Moreover, for all $0\leq u<s$ we have that a.s.
\begin{align*}
 [\widetilde L]_s -  [\widetilde L]_u = ([L]\circ \tau)_s -  ([L]\circ \tau)_u&\leq ([L]\circ \tau)_s + \tau_s -  ([L]\circ \tau)_u - \tau_u\\
 &= ([L]_{\tau_s} + \tau_s) - ([L]_{\tau_u} + \tau_u) = s-u.
\end{align*}
Therefore $[\widetilde L]$ is a.s.\ absolutely continuous with respect to the Lebesgue measure on~$\mathbb R_+$. Consequently, due to Theorem \ref{thm:Brrepres}, there exists an enlarged probability space $(\widetilde {\Omega}, \widetilde {\mathcal F}, \widetilde {\mathbb P})$ with an enlarged filtration $\widetilde {\mathbb G} = (\widetilde{\mathcal G}_s)_{s \geq 0}$, a $2d$-dimensional standard Wiener process $W$, which is defined on~$\widetilde{\mathbb G}$, and a stochastically integrable progressively measurable function $f:\mathbb R_+ \times\widetilde{\Omega} \to \mathcal L(\mathbb R^{2d}, X\times X)$ such that $\widetilde L = f\cdot W$. Let $f^M, f^N:\mathbb R_+ \times \Omega \to \mathcal L(\mathbb R^{2d},X)$ be such that $f = (f^M, f^N)$. Then $\widetilde M = f^M\cdot W$ and $\widetilde N = f^N\cdot W$. Let $(\overline{\Omega}, \overline{\mathcal F}, \overline{\mathbb P})$ be an independent probability space with a filtration~$\overline{\mathbb G}$ and a $2d$-dimensional Wiener process $\overline W$ on it. Denote by $\overline {\mathbb E}$ the expectation on $(\overline{\Omega}, \overline{\mathcal F}, \overline{\mathbb P})$. Then because of the decoupling theorem \cite[Theorem 4.4.1]{HNVW1}, for each~$s\geq 0$
\begin{equation}\label{eq:weakdiffsubproccontdecopl}
 \begin{split}
  (\mathbb E \|\widetilde N_s\|^p)^{\frac 1p} = (\mathbb E \|(f^N\cdot W)_s\|^p)^{\frac 1p}\leq \beta_{p,X}(\mathbb E\, \overline{\mathbb E} \|(f^N\cdot \overline W)_s\|^p)^{\frac 1p},\\
  \frac{1}{\beta_{p,X}}(\mathbb E\, \overline{\mathbb E} \|(f^M\cdot \overline W)_s\|^p)^{\frac 1p} \leq (\mathbb E \|(f^M\cdot W)_s\|^p)^{\frac 1p} = (\mathbb E \|\widetilde M_s\|^p)^{\frac 1p}.
 \end{split}
\end{equation}
Due to the multidimensional version of \cite[Theorem 17.11]{Kal} and \eqref{eq:weakversofweak3} for each $x^* \in X^*$ we have that
\begin{equation}\label{eq:weakdiffsubproccontomega_0}
 s\mapsto [\langle \widetilde M, x^*\rangle]_s - [\langle \widetilde N, x^*\rangle]_s = \int_0^s(|\langle x^*, f^M(r)\rangle|^2 - |\langle x^*, f^N(r)\rangle|^2)\ud r
\end{equation}
is nonnegative and absolutely continuous a.s. Since $X$ is separable, we can fix a set $\widetilde {\Omega}_0\subset \widetilde{\Omega}$ of full measure on which the function \eqref{eq:weakdiffsubproccontomega_0} is nonnegative for each $s\geq 0$.

Now fix $\omega \in \widetilde {\Omega}_0$ and $s\geq 0$. Let us prove that 
$$
\overline{\mathbb E} \|(f^N(\omega)\cdot \overline W)_s\|^p\leq \overline{\mathbb E} \|(f^M(\omega)\cdot \overline W)_s\|^p.
$$
Since $f^M(\omega)$ and $f^N(\omega)$ are deterministic on $\overline {\Omega}$, and since due to \eqref{eq:weakdiffsubproccontomega_0} for each $x^* \in X^*$
\begin{align*}
  \overline{\mathbb E}|\langle (f^N(\omega)\cdot \overline W)_s, x^*\rangle|^2 &= \int_0^s|\langle x^*, f^N(r,\omega)\rangle|^2\ud r\\
  &\leq  \int_0^s|\langle x^*, f^M(r,\omega)\rangle|^2\ud r =  \overline{\mathbb E}|\langle (f^M(\omega)\cdot \overline W)_s, x^*\rangle|^2,
\end{align*}
by \cite[Corollary 4.4]{NW1} we have that $\overline{\mathbb E} \|(f^N(\omega)\cdot \overline W)_s\|^p\leq \overline{\mathbb E} \|(f^M(\omega)\cdot \overline W)_s\|^p$. Consequently, due to~\eqref{eq:weakdiffsubproccontdecopl} and the fact that $\widetilde{\mathbb P}(\Omega_0)=1$
\begin{align*}
 (\mathbb E \|\widetilde N_s\|^p)^{\frac 1p} \leq \beta_{p,X}(\mathbb E\, \overline{\mathbb E} \|(f^N\cdot \overline W)_s\|^p)^{\frac 1p}&\leq \beta_{p,X}(\mathbb E\, \overline{\mathbb E} \|(f^M\cdot \overline W)_s\|^p)^{\frac 1p}\leq \beta_{p,X}^2(\mathbb E \|\widetilde M_s\|^p)^{\frac 1p}.
\end{align*}
Recall that $\widetilde M$ and $\widetilde N$ are bounded, so thanks to the dominated convergence theorem one gets \eqref{eq:weakversofweak2} with $c_{p,X} = \beta_{p, X}^2$ by letting $s$ to infinity.
\end{proof}

\begin{proof}[Proof of Theorem \ref{thm:weakdiffsubproccont}] 
{\em The ``only if'' part \& the upper bound of $c_{p,X}$:} The ``only if'' part and the estimate $c_{p,X}\leq \beta_{p, X}^2$ follows from Proposition \ref{prop:weakversofweak} since \eqref{eq:weakversofweak1} holds for $M$ and $N$ because $N$ is weakly differentially subordinated to $M$.

{\em The ``if'' part \& the lower bound of $c_{p,X}$:} {\em See the supplement \cite{Y17MDSupp}.}

\end{proof}

\begin{remark}\label{rem:Hiltranfweakdiffsubcont}
Let $X$ be a Banach space. Then according to \cite{Bour83,Burk83,Gar85} the Hilbert transform $\mathcal H_X$ can be extended to $L^p(\mathbb R; X)$ for each $1<p<\infty$ if and only if $X$ is a UMD Banach space. Moreover, if this is the case, then 
\[
 \sqrt{\beta_{p, X}}\leq\|\mathcal H_X\|_{\mathcal L(L^p(\mathbb R; X))}\leq \beta_{p, X}^2.
\]
As it was shown in \cite{Y17FourUMD}, the upper bound $\beta_{p, X}^2$ can be also directly derived from the upper bound for $c_{p,X}$ in Theorem \ref{thm:weakdiffsubproccont}. The sharp upper bound for $\|\mathcal H_X\|_{\mathcal L(L^p(\mathbb R; X))}$ remains an open question (see \cite[pp. 496-497]{HNVW1}), so the sharp upper bound for $c_{p,X}$ is of interest. 
\end{remark}

\begin{lemma*}\label{lemma:weakdiffsubdecomp}
 Let $X$ be a Banach space, $M^c,N^c:\mathbb R_+ \times \Omega \to X$ be continuous martingales, $M^d, N^d:\mathbb R_+ \times \Omega \to X$ be purely discontinuous martingales, $M^c_0 = N^c_0=0$. Let $M := M^c +M^d$, $N:= N^c + N^d$. Suppose that $N$ is weakly differentially subordinated to $M$. Then $N^c$ is weakly differentially subordinated to $M^c$, and $N^d$ is weakly differentially subordinated to $M^d$.
\end{lemma*}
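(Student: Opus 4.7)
The claim reduces to a coordinate-wise assertion: I fix an arbitrary $x^* \in X^*$ and work with the real-valued martingales $\langle M, x^*\rangle$ and $\langle N, x^*\rangle$. Since $\langle M^c, x^*\rangle$ is continuous with $\langle M^c, x^*\rangle_0 = 0$ and $\langle M^d, x^*\rangle$ is purely discontinuous (by Definition \ref{def:purelydiscXvalued}), the uniqueness part of the scalar Meyer-Yoeurp decomposition (Remark \ref{rem:YoeMeyR}) identifies $\langle M^c,x^*\rangle$ and $\langle M^d,x^*\rangle$ as the continuous and purely discontinuous parts of $\langle M, x^*\rangle$. That same Remark then gives $[\langle M, x^*\rangle]^c = [\langle M^c, x^*\rangle]$ and $[\langle M, x^*\rangle]^d = [\langle M^d, x^*\rangle]$, where $^c$ and $^d$ are the continuous/pure-jump pieces from Lemma~\ref{lemma:A^dA^c}; analogous identities hold for $N$.

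Set $A := [\langle M, x^*\rangle] - [\langle N, x^*\rangle]$, which by hypothesis is an increasing c\`adl\`ag process. Write $A = A^c + A^d$ with
\begin{align*}
A^c_t &:= [\langle M^c, x^*\rangle]_t - [\langle N^c, x^*\rangle]_t, \\
A^d_t &:= [\langle M^d, x^*\rangle]_t - [\langle N^d, x^*\rangle]_t.
\end{align*}
Then $A^c$ is continuous with $A^c_0 = 0$, while $A^d$ has no continuous component in its paths and satisfies $A^d_0 = A_0 \geq 0$. Showing weak differential subordination of $N^c$ by $M^c$ and of $N^d$ by $M^d$ is exactly the statement that both $A^c$ and $A^d$ are increasing a.s.

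For $A^d$: because $A^c$ is continuous, $\Delta A^d_s = \Delta A_s \geq 0$ for every $s>0$, since the jumps of a non-decreasing c\`adl\`ag process are non-negative. As $A^d$ equals its own sum of jumps (plus $A^d_0$), we get $A^d_t - A^d_s = \sum_{s<r\leq t}\Delta A_r \geq 0$ for $0\leq s\leq t$, so $A^d$ is increasing. For $A^c$: the Lebesgue--Stieltjes measure $dA$ of the non-decreasing c\`adl\`ag process $A$ is a non-negative measure, which splits into a non-negative diffuse part (equal to $dA^c$) and a non-negative atomic part (equal to $dA^d$). Therefore $A^c_t - A^c_s \geq 0$ for all $t \geq s$, and combined with $A^c_0 = 0$ this shows $A^c$ is increasing. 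Since $x^* \in X^*$ was arbitrary, both weak differential subordination assertions follow.

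\textbf{Main obstacle.} The argument is essentially a bookkeeping exercise once one knows the correct scalar decomposition. The only point that requires a brief justification is that applying $x^*$ commutes with the Meyer-Yoeurp decomposition, i.e.\ that $\langle M^c, x^*\rangle$ and $\langle M^d, x^*\rangle$ really are the continuous and purely discontinuous parts of $\langle M, x^*\rangle$; this is immediate from the uniqueness in Remark~\ref{rem:YoeMeyR} together with Definition~\ref{def:purelydiscXvalued}. After that the proof is a direct manipulation of the non-negative Lebesgue--Stieltjes measure associated with an increasing c\`adl\`ag process.
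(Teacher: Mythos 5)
Your proof is correct and follows essentially the same route as the paper: fix $x^*\in X^*$, use the uniqueness in Remark \ref{rem:YoeMeyR} (with Definition \ref{def:purelydiscXvalued}) to identify $\langle M^c,x^*\rangle,\langle M^d,x^*\rangle$ as the scalar Meyer--Yoeurp parts of $\langle M,x^*\rangle$ together with $[\langle M,x^*\rangle]=[\langle M^c,x^*\rangle]+[\langle M^d,x^*\rangle]$, and then conclude that the continuous and pure-jump pieces of the increasing process $[\langle M,x^*\rangle]-[\langle N,x^*\rangle]$ are separately increasing. The only real difference is that the paper delegates this last scalar step to \cite[Lemma 1]{Wang}, whereas you prove it directly via the jump-sum and diffuse/atomic decomposition of the Lebesgue--Stieltjes measure, which is a valid, self-contained substitute; the paper's proof additionally records the time-zero inequalities $\|N^c_0\|\leq\|M^c_0\|$ and $\|N^d_0\|\leq\|M^d_0\|$ (relevant for the version of the definition that includes the initial-value condition), which your argument addresses only implicitly through $A_0$.
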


\begin{proof}[Proof of Theorem \ref{thm:weakdiffsubprocmaingen}]
 By Theorem \ref{thm:Meyer-Yoeurp} there exist martingales $M^d, M^c, N^d, N^c:\mathbb R_+ \times \Omega \to X$ such that $M^d$ and $N^d$ are purely discontinuous, $M^c$ and $N^c$ are continuous, $M^c_0 = N^c_0=0$, and $M = M^d+ M^c$ and $N = N^d+ N^c$. By Lemma~\ref{lemma:weakdiffsubdecomp}, $N^d$ is weakly differentially subordinated to $M^d$ and $N^c$ is weakly differentially subordinated to $M^c$. Therefore for each $t\geq 0$
 \begin{align*}
  (\mathbb E \|N_t\|^p)^{\frac 1p} \stackrel{(i)}\leq (\mathbb E \|N^d_t\|^p)^{\frac 1p} + (\mathbb E \|N^c_t\|^p)^{\frac 1p}
 &\stackrel{(ii)} \leq \beta_{p, X}^2(\mathbb E \|M^d_t\|^p)^{\frac 1p} + \beta_{p, X}(\mathbb E \|M^c_t\|^p)^{\frac 1p}\\
  & \stackrel{(iii)}\leq\beta_{p, X}^2(\beta_{p,X}+1)(\mathbb E \|M_t\|^p)^{\frac 1p}, 
 \end{align*}
where $(i)$ holds thanks to the triangle inequality, $(ii)$ follows from Theorem \ref{thm:ewakdiffsubUMDpurdisc} and Theorem \ref{thm:weakdiffsubproccont}, and $(iii)$ follows from \eqref{eq:Meyer-Yoeurp}.
\end{proof}

\begin{remark}
 It is worth noticing that in a view of recent results the sharp constant in \eqref{eq:Meyer-Yoeurp} and \eqref{eq:thm:YoeurpdecUMDaq} can be derived and equals the {\em UMD$^{\{0,1\}}_p$ constant} $\beta_{p, X}^{\{0,1\}}$. In order to show that this is the right upper bound one needs to use a {\em $\{0,1\}$-Burkholder function} instead of the Burkholder function, while the sharpness follows analogously Theorem \ref{thm:exampleforlowboundofA} and \ref{thm:exampleforlowboundofA^qA^a}. See \cite{Y17UMD^A} for details.
\end{remark}

\begin{remark}
 In the recent paper \cite{Y17GMY} the existence of the canonical decomposition of a general local martingale together with the corresponding weak $L^1$-estimates were shown. Again existence of the canonical decomposition of any $X$-valued martingale is equivalent to $X$ having the UMD property.
\end{remark}

\section*{Acknowledgements} The author would like to thank Mark Veraar for helpful comments; in particular for showing him \cite[Corollary 4.4]{NW1}. The author thanks Jan van Neerven for careful reading of parts of this article and useful suggestions. The author thanks the anonymous referee for his/her valuable comments.

\bibliographystyle{plain}

\def\cprime{$'$} \def\polhk#1{\setbox0=\hbox{#1}{\ooalign{\hidewidth
  \lower1.5ex\hbox{`}\hidewidth\crcr\unhbox0}}}
  \def\polhk#1{\setbox0=\hbox{#1}{\ooalign{\hidewidth
  \lower1.5ex\hbox{`}\hidewidth\crcr\unhbox0}}} \def\cprime{$'$}

\newpage
\renewcommand\thesection{\Alph{section}}
\setcounter{section}{18}
\section{Supplement: Some proofs}

\begin{customprop}{\ref{prop:dualofMXp}}
  Let $X$ be a Banach space with the Radon-Nikod\'ym property (e.g.\ reflexive), $1<p<\infty$. Then $(\mathcal M_X^p)^* = \mathcal M_{X^*}^{p'}$, and $\|M\|_{(\mathcal M_X^p)^*} = \|M\|_{\mathcal M_{X^*}^{p'}}$ for each $M \in \mathcal M_{X^*}^{p'}$.
\end{customprop}
\begin{proof}
 Since $\|M\|_{\mathcal M_X^p} = \|M_{\infty}\|_{L^p(\Omega; X)}$ for each $M \in \mathcal M_X^p$, and since for each $\xi\in L^p(\Omega; X)$ we can construct a martingale $M = (M_t)_{t\geq 0} = (\mathbb E(\xi|\mathcal F_t))_{t\geq 0}$ such that $\|M\|_{\mathcal M_X^p} = \|\xi\|_{L^p(\Omega; X)}$, $\mathcal M_X^p$ is isometric to $L^p(\Omega; X)$, and therefore the proposition follows from \cite[Proposition 1.3.3]{HNVW1}.
\end{proof}

\begin{customprop}{\ref{thm:purdiscorthtoanycont1}}
   A martingale $M:\mathbb R_+\times \Omega \to \mathbb R$ is purely discontinuous if and only if $M N$ is a martingale for any continuous bounded martingale $N:\mathbb R_+ \times \Omega \to \mathbb R$ with $N_0 = 0$. 
\end{customprop}

   \begin{proof}
  One direction follows from \cite[Corollary 26.15]{Kal}. Indeed, if $M$ is purely discontinuous, then a.s.\ $[M,N] = 0$. Therefore by Remark \ref{rem:covariation}, $MN$ is a local martingale, and due to integrability it is a martingale. 
 
 For the other direction we apply Remark \ref{rem:YoeMeyR}. Let $N:\mathbb R_+ \times \Omega \to \mathbb R$ be a continuous martingale such that $N_0=0$ and $M - N$ is purely discontinuous. Then there exists an increasing sequence of stopping times $(\tau_n)_{n\geq 1}$ such that $\tau_n \nearrow \infty$ as $n\to \infty$ and $N^{\tau_n}$ is a bounded continuous martingale for each $n\geq 1$. Therefore $MN^{\tau_n}$ and $(M-N)N^{\tau_n}$ are martingales for any $n\geq 1$, and hence $(N^{\tau_n})^2 = (MN^{\tau_n} - (M-N)N^{\tau_n})^{\tau_n}$ is a martingale that starts at zero. On the other hand it is a nonnegative martingale, so it is the zero martingale. By letting $n$ to infinity we prove that $N=0$ a.s., so $M$ is purely discontinuous.
 \end{proof}
 
 \begin{customthm}{\ref{thm:apptimechange}}
  Let $A:\mathbb R_+ \times \Omega\to \mathbb R_+$ be a strictly increasing continuous predictable process such that $A_0 = 0$ and $A_{t} \to \infty$ as $t\to \infty$ a.s. Let $\tau = (\tau_s)_{s\geq 0}$ be a random time-change defined as
  $\tau_s := \{t: A_t=s\}$, $s\geq 0$.
Then $(A\circ \tau)(t) = (\tau \circ A)(t) = t$ a.s.\ for each $t\geq 0$. Let $\mathbb G = (\mathcal G_s)_{s \geq 0} = (\mathcal F_{\tau_s})_{s\geq 0}$ be the induced filtration. Then $(A_t)_{t\geq 0}$ is a random time-change with respect to $\mathbb G$ and for any $\mathbb F$-martingale $M:\mathbb R_+ \times \Omega \to\mathbb R$ the following holds
\begin{itemize}
 \item [(i)] $M \circ \tau$ is a continuous $\mathbb G$-martingale if and only if $M$ is continuous, and
 \item[(ii)]$M \circ \tau$ is a purely discontinuous $\mathbb G$-martingale if and only if $M$ is purely discontinuous.
\end{itemize}
 \end{customthm}

 \begin{proof}
Let us first show that $(A\circ \tau)(t) = (\tau \circ A)(t) = t$ a.s.\ for each $t\geq 0$. Fix $t\geq 0$. Then a.s.
\begin{equation}\label{eq:apptimechange1}
  (\tau\circ A)(t) = \tau_{A_t} = \{s: A_s = A_t\} = t.
\end{equation}
Since $A$ is strictly increasing continuous and starts at zero, there exists $S_t:\Omega \to \mathbb R_+$ such that $A_{S_t} = t$ a.s. Then by \eqref{eq:apptimechange1} and the definition of $S_t$ a.s.
\[
 (A\circ \tau)(t) = (A\circ \tau)(A_{S_t}) = (A\circ (\tau\circ A))(S_t) = A_{S_t} = t.
\]

Now we turn to the second part of the theorem. Notice that $s\mapsto \tau_s$, $s\geq 0$, is a continuous strictly increasing $\mathbb G$-predictable process which starts at zero. Then for each $t\geq 0$ one has that $A_t = \{s:\tau_s = t\}$, so $(A_t)_{t\geq 0}$ is a random time-change with respect to the filtration~$\mathbb G$. Since $(A\circ \tau)(t) = (\tau \circ A)(t) = t$ a.s.\ for each $t\geq 0$, it is sufficient to show only ``if'' parts of both (i) and (ii).

 (i) follows from the fact that $\tau_{s-} = \tau_s$ (so $M$ is $\tau$-continuous), and the Kazamaki theorem \cite[Theorem 17.24]{Kal}. Let us now show (ii). Thanks to \cite[Theorem 7.12]{Kal} $M \circ \tau$ is a martingale.  Let $N:\mathbb R_+ \times \Omega \to \mathbb R$ be a continuous bounded $\mathbb G$-martingale such that $N_0 = 0$. Then by (i), $N\circ A$ is a continuous bounded $\mathbb F$-martingale, and therefore by Proposition \ref{thm:purdiscorthtoanycont1} the process $M \cdot(N\circ A)$ is a martingale. Consequently due to \cite[Theorem 7.12]{Kal}, $(M\circ \tau)N = (M \cdot(N\circ A))\circ \tau$ is a martingale. Since $N$ is taken arbitrary and due to Proposition \ref{thm:purdiscorthtoanycont1}, $M\circ \tau$~is purely discontinuous.
\end{proof}

\begin{customlemma}{\ref{lemma:traceindep}}
  Let $d$ be a natural number, $E$ be a $d$-dimensional linear space. Let $V: E\times E \to \mathbb R$ and $W:E^* \times E^* \to \mathbb R$ be two bilinear functions. Then the expression
 \begin{equation}\label{eq:traceindep}
  \sum_{n, m=1}^d V(e_n, e_m) W(e_n^*, e_m^*)
 \end{equation}
does not depend on the choice of basis $(e_n)_{n=1}^d$ of $E$ (here $(e_n^*)_{n=1}^d$ is the corresponding dual basis of $(e_n)_{n=1}^d$).
\end{customlemma}

\begin{proof}
Let $(e_n)_{n=1}^d$ be a basis of $E$, $(e_n^*)_{n=1}^d$ be the corresponding dual basis. Fix another basis $(\tilde e_n)_{n=1}^d$ of $E$. Let $(\tilde e_n^*)_{n=1}^d$ be the corresponding dual basis of $E^*$. Let matrices $A = (a_{ij})_{i,j=1}^d$ and $B = (b_{ij})_{i,j=1}^d$ be such that $\tilde e_n =\sum_{i=1}^d a_{ni}e_i$, $\tilde e^*_n = \sum_{i=1}^db_{ni}e_i^*$ for each $n=1,\ldots, d$. Then for each $n,m = 1,\ldots,d$
 \[
  \delta_{nm} = \langle \tilde e_n,\tilde e_m^*\rangle = \Bigl\langle \sum_{i=1}^d a_{ni}e_i, \sum_{j=1}^db_{mj}e_j^*  \Bigr\rangle = \sum_{i=1}^da_{ni}b_{mi}.
 \]
Hence $A^T B=I$, and thus also $AB^T=I$ is the identical matrix as well, and therefore $\sum_{i=1}^da_{in}b_{im}=\delta_{nm}$ for each $n,m=1.\ldots,d$. Consequently, if we paste $(\tilde e_n)_{n=1}^d$ and $(\tilde e_n^*)_{n=1}^d$ in \eqref{eq:traceindep}, due to the bilinearity of $V$ and $W$
\begin{align*}
  \sum_{n, m=1}^d V(\tilde e_n, \tilde e_m) W(\tilde e_n^*, \tilde e_m^*) &= \sum_{i,j,k,l,n, m=1}^d V(a_{ni} e_i, a_{mj} e_j) W(b_{nk} e_k^*, b_{ml} e_l^*)\\
  &=\sum_{i,j,k,l=1}^d \sum_{n=1}^da_{ni}b_{nk} \sum_{m=1}^da_{mj}b_{ml}V( e_i,  e_j) W(e_k^*,  e_l^*)\\
  &=\sum_{i,j,k,l=1}^d \delta_{ik} \delta_{jl}V( e_i,  e_j) W(e_k^*,  e_l^*)\\
  &= \sum_{i,j=1}^d V( e_i,  e_j) W(e_i^*,  e_j^*).
\end{align*}
\end{proof}

\begin{customlemma}{\ref{lem:contapproxradem}}
 Let $\eps>0$, $p\in (1,\infty)$. Then there exists a continuous martingale $M:[0,1]\times \Omega \to [-1,1]$ with a symmetric distribution such that $\sign M_1$ is a~Rademacher random variable and
\[
  \|M_1-\sign M_1\|_{L^p(\Omega)}<\eps.\eqno{\eqref{eq:lemcontapproxradem}}
\]
\end{customlemma}

\begin{proof}
 Let $W:[0,1]\times \Omega \to \mathbb R$ be a standard Wiener process. For each $n\geq 1$ we define a stopping time $\tau_n := \inf\{t:|W_t|>\frac 1n\}\wedge1$. Then $\tau_n \to 0$ a.s.\ as $n\to \infty$, and hence there exists $N\geq 1$ such that $\mathbb P(N W^{\tau_N}_1 = \sign W^{\tau_N}_1)> 1-\frac {\eps^p}{2^p}$. Let $M = NW^{\tau_N}$. Then
 \[
  \|M_1-\sign M_1\|_{L^p(\Omega)} \leq \Bigr(\mathbb E \bigl[(|M_1| + 1)^p \mathbf 1_{M_1 \neq \sign M_1}\bigr]\Bigl)^{\frac 1p}< \Bigr(2^p \cdot \frac{\eps^p}{2^p}\Bigl)^{\frac 1p} \leq \eps,
 \]
 and \eqref{eq:lemcontapproxradem} follows. 
 
 Notice that since $W$ is a Wiener process, $W_1$ has a standard Gaussian distribution. Consequently, 
 \[
  \mathbb P(M_1 = 0) = \mathbb P(NW_1^{\tau_N} = 0) \leq \mathbb P(NW_1 = 0) = 0,
 \]
 and since $W^{\tau_N}$ has a symmetric distribution, $\sign M_1$ is Rademacher.
\end{proof}

\begin{customthm}{\ref{thm:charofUMDbyMeyYoe}}
  Let $X$ be a Banach space. Then $X$ is UMD if and only if for some (or, equivalently, for all) $p\in(1,\infty)$, for any probability space $(\Omega, \mathcal F,\mathbb P)$ with any filtration $\mathbb F = (\mathcal F_t)_{t\geq 0}$ that satisfies the usual conditions, $\mathcal M_X^{p} = \mathcal M_X^{p, d} \oplus \mathcal M_X^{p, c}$, and there exist projections $A^d, A^c\in \mathcal L(\mathcal M_X^{p})$ such that $\text{ran } A^d = \mathcal M_X^{p, d}$, $\text{ran } A^c = \mathcal M_X^{p, c}$, and for any $M \in \mathcal M_X^{p}$ the decomposition $M = A^d M + A^cM$
is the Meyer-Yoeurp decomposition from Theorem \ref{thm:Meyer-Yoeurp}. If this is the case, then
\[
  \|A^d\|\leq \beta_{p,X}\;\; \text{and}\;\;
  \|A^c\|\leq  \beta_{p,X}. \eqno{\eqref{eq:charofUMDbyMeyYoe1}}
\]
Moreover, there exist  $(\Omega, \mathcal F,\mathbb P)$ and $\mathbb F = (\mathcal F_t)_{t\geq 0}$ such that
\[
  \|A^d\|,\|A^c\|\geq \frac {\beta_{p, X}-1}{2}\vee 1.  \eqno{\eqref{eq:charofUMDbyMeyYoe2}}
\]
\end{customthm}

\begin{proof}
The ``if'' part follows from \eqref{eq:charofUMDbyMeyYoe1}, and the ``only if'' part follows from \eqref{eq:charofUMDbyMeyYoe2}, so it is sufficient to show \eqref{eq:charofUMDbyMeyYoe1} and \eqref{eq:charofUMDbyMeyYoe2}. \eqref{eq:charofUMDbyMeyYoe1} is equivalent to \eqref{eq:Meyer-Yoeurp}. The bound $\geq \frac {\beta_{p, X}-1}{2}$ in \eqref{eq:charofUMDbyMeyYoe2} follows from Theorem \ref{thm:exampleforlowboundofA}, while the bound $\geq 1$ follows from the fact that both $A^d$ and $A^c$ are projections onto nonzero spaces $\mathcal M^{p, d}_X$ and $\mathcal M^{p, c}_X$ respectively.
\end{proof}

\begin{customprop}{\ref{prop:limthmforaccjqlc}}
 Let $1<p<\infty$, $M:\mathbb R_+ \times \Omega \to \mathbb R$ be a purely discontinuous \mbox{$L^p$-mar}\-tin\-gale. Let $(M^n)_{n\geq 1}$ be a sequence of purely discontinuous martingales such that $M^n_{\infty} \to M_{\infty}$ in $L^p(\Omega)$. Then the following assertions hold
 \begin{itemize}
  \item [(a)] if $(M^n)_{n\geq 1}$ have accessible jumps, then $M$ has accessible jumps as well;
    \item [(b)] if $(M^n)_{n\geq 1}$ are quasi-left continuous martingales, then $M$ is quasi-left continuous as well.
 \end{itemize}
\end{customprop}

\begin{proof}
 We will only show (a), (b) can be proven in the same way. Without loss of generality suppose that $M_0=0$ and $M^n_0 = 0$ for each $n\geq 1$. Let $M^a, M^q:\mathbb R_+ \times \Omega \to \mathbb R$ be purely discontinuous martingales such that $M^a$ has accessible jumps, $M^q$ is quasi-left continuous, $M^a_0=M^q_0=0$ and $M=M^a + M^q$ (see Theorem~\ref{thm:Yoeurpdec}). Then by Theorem \ref{thm:Yoeurpdec}, the Doob maximal inequality \cite[Theorem 1.3.8(iv)]{KS} and the fact the a quadratic variation is a.s.\ nonnegative
 \begin{align*}
   \mathbb E|M_{\infty} - M^n_{\infty}|^p \eqsim_{p} \mathbb E[M - M^n]_{\infty}^{\frac p2}
   =\mathbb E \Bigl([M^a - M^n]_{\infty} + [M^q]_{\infty}\Bigr)^{\frac p2}\geq \mathbb E[M^q]_{\infty}^{\frac p2},
 \end{align*}
 and since $\mathbb E|M_{\infty} - M^n_{\infty}|^p \to 0$ as $n \to \infty$, $\mathbb E[M^q]_{\infty}^{\frac p2} = 0$. Therefore $M^q = 0$ a.s., so $M$ has accessible jumps.
\end{proof}

\begin{customlemma}{\ref{lem:weakdefforaccjumpsandqlc}}
  Let $X$ be a reflexive Banach space, $M:\mathbb R_+ \times \Omega \to X$ be a purely discontinuous martingale.
 \begin{itemize}
  \item [(i)] $M$ has accessible jumps if and only if for each $x^* \in X^*$ the martingale $\langle M, x^*\rangle$ has accessible jumps;
   \item [(ii)] $M$ is quasi-left continuous if and only if for each $x^* \in X^*$ the martingale $\langle M, x^*\rangle$ is quasi-left continuous.
 \end{itemize}
\end{customlemma}

\begin{proof}
Without loss of generality we can assume that $X$ is a separable Banach space. We will show only $(i)$, while $(ii)$ can be proven analogously.

 (i): The ``only if'' part is obvious. For ``if'' part we fix a dense subset $(x_m^*)_{m\geq 1}$ of $X^*$.  Let $\tau$ be a totally inaccessible stopping time. Then $\Delta \langle M_{\tau}, x_m^*\rangle = \langle \Delta M_{\tau}, x_m^*\rangle = 0$ a.s.\ for each $m\geq 1$. Hence $\Delta M_{\tau} = 0$ a.s., and the ``if'' part is proven.
\end{proof}

\begin{customprop}{\ref{prop:M^pqandM^paareclosed}}
  Let $X$ be a Banach space, $1<p<\infty$. Then $\mathcal M_X^{p, q}$ and $\mathcal M_X^{p, a}$ are closed subspaces of $\mathcal M_X^{p, d}$.
\end{customprop}

\begin{proof}
 We only will show the case of $\mathcal M_X^{p, q}$, the proof for $\mathcal M_X^{p, a}$ is analogous. Let $(M^n)_{n\geq 1}\in \mathcal M_X^{p, q}$ be such that $(M^n_{\infty})_{n\geq 1}$ is a Cauchy sequence in $L^p(\Omega; X)$. Let $\xi = \lim_{n\to\infty} M^n_{\infty}$ in $L^p(\Omega; X)$. Define an $X$-valued martingale $M$ as follows: $M_t = \mathbb E (\xi|\mathcal F_t)$, $t\geq 0$. Then since conditional expectation is a contraction in $L^p(\Omega; X)$, $M_0 = \lim_{n\to \infty}M^n_0 = 0$. Now let us show that $M$ is quasi-left continuous. By Lemma \ref{lem:weakdefforaccjumpsandqlc} it is sufficient to show that $\langle M, x^*\rangle$ is quasi-left continuous for each $x^*\in X^*$. Fix $x^* \in X^*$. Define $N:= \langle M, x^*\rangle$ and $N^n:= \langle M^n, x^*\rangle$ for each $n\geq 1$. Then
 \begin{align*}
  \mathbb E\|N_{\infty}-N^n_{\infty}\|^p &\eqsim_p \mathbb E[N-N^n]_{\infty}^{\frac p2} = \mathbb E \bigl([N-N^n]^c_{\infty} + [N-N^n]^q_{\infty}+ [N-N^n]^a_{\infty}\bigr)^{\frac p2}\\
  &=\mathbb E \bigl([N]^c_{\infty} + [N-N^n]^q_{\infty}+ [N]^a_{\infty}\bigr)^{\frac p2}\geq \mathbb E \bigl([N]^c_{\infty}+ [N]^a_{\infty}\bigr)^{\frac p2},
 \end{align*}
and since the first expression vanishes as $n\to \infty$, $[N]^c_{\infty} = [N]^a_{\infty}=0$ a.s., so $N$ is quasi-left continuous. Since $x^*\in X^*$ was arbitrary, $M\in \mathcal M_X^{p, q}$.
\end{proof}

\begin{customlemma}{\ref{lem:accjumps+qlc=cons}}

 Let $X$ be a Banach space, $M:\mathbb R_+ \times\Omega \to X$ be a purely discontinuous martingale. Let $M$ be both with accessible jumps and quasi-left continuous. Then $M = M_0$ a.s. In other words, $\mathcal M_X^{p, q}\cap\mathcal M_X^{p, a} = 0$.
 \end{customlemma}

 \begin{proof}
 Without loss of generality set $M_0=0$. Suppose that $\mathbb P(M\neq 0)>0$. Then there exists $x^* \in X^*$ such that $\mathbb P(\langle M, x^*\rangle \neq0)>0$. Let $N = \langle M, x^*\rangle$. Then $N$ is both with accessible jumps and quasi-left continuous. Hence by Corollary \ref{cor:M=M_0ifMbothaccjandqlc}, $N = 0$ a.s., and therefore $M=0$ a.s.
\end{proof}

\begin{customprop}{\ref{prop:intpreserves}}
  Let $H$ be a Hilbert space, $X$ be a Banach space, $M:\mathbb R_+ \times \Omega \to H$ be a martingale, $\Phi:\mathbb R_+\times \Omega \to \mathcal L(H, X)$ be elementary progressive. Then
 \begin{itemize}
  \item [(i)] if $M$ is continuous, then $\Phi \cdot M$ is continuous;
  \item[(ii)] if $M$ is purely discontinuous, then $\Phi\cdot M$ is purely discontinuous;
  \item[(iii)] if $M$ has accessible jumps, then $\Phi\cdot M$ has accessible jumps;
  \item[(iv)] if $M$ is quasi-left continuous, then $\Phi\cdot M$ is quasi-left continuous.
 \end{itemize}
\end{customprop}

\begin{proof}
 (i): If $M$ is continuous, then by the construction of a stochastic integral \eqref{eq:defofstochintwrtM}, $\Phi \cdot M$ is a finite sum of continuous martingales, so it is continuous as well.
 
 (ii): Notice that according to Remark \ref{rem:purdiscorthtoanycont2} the space of purely discontinuous martingales is linear, so again as in (i) by Proposition \ref{thm:purdiscorthtoanycont1} and \eqref{eq:defofstochintwrtM}, $\Phi \cdot M$ is a finite sum of purely discontinuous martingales, so it is purely discontinuous as well.
 
 (iii) and (iv): By \eqref{eq:defofstochintwrtM} we have that for any stopping time $\tau$ a.s.\ $\Delta(\Phi \cdot M)_{\tau} \neq 0$ implies $\Delta M_{\tau} \neq 0$. 
 Therefore by Definition \ref{def:accjumpsqlc} if $M$ has accessible jumps, then $\Phi \cdot M$ has them as well, and if $M$ is quasi-left continuous, then $\Phi \cdot M$ is quasi-left continuous as well.
\end{proof}

\begin{customthm}{\ref{thm:weakdiffsubproccont}}
  Let $X$ be a Banach space. Then $X$ is a UMD Banach space if and only if for some (equivalently, for all) $p\in (1,\infty)$ there exists $c>0$ such that for any continuous martingales $M, N:\mathbb R_+ \times \Omega \to X$ such that $N$ is weakly differentially subordinated to $M$, $M_0 = N_0 = 0$, one has that
\[
  (\mathbb E \|N_{\infty}\|^p)^{\frac 1p} \leq c_{p,X}(\mathbb E \|M_{\infty}\|^p)^{\frac 1p}. \eqno{\eqref{eq:weakdiffsubprocmaingencont}}
\]
 If this is the case, then the least admissible $c_{p,X}$ is in the segment $[\beta_{p, X}, \beta_{p, X}^2]$.
\end{customthm}

\begin{proof}
{\em The ``if'' part \& the lower bound of $c_{p,X}$:} Let $\beta_{p, X}$ be the UMD constant of $X$ ($\beta_{p, X} = \infty$ if $X$ is not a UMD space). Fix $K\geq 1$. Then by \cite[Theorem 4.2.5]{HNVW1} there exists $N\geq 1$, a Paley-Walsh martingale difference sequence $(d_n)_{n=1}^N$, and a $\{-1,1\}$-valued sequence $(\eps_n)_{n=1}^N$ such that
\[
 \Bigl(\mathbb E \Bigl\|\sum_{n=1}^N \eps_nd_n\Bigr\|^p\Bigr)^{\frac 1p} \geq \Bigl(\beta_{p, X}\wedge 2K-\frac {1}{2K}\Bigr) \Bigl(\mathbb E \Bigl\|\sum_{n=1}^N d_n\Bigr\|^p\Bigr)^{\frac 1p}
\]
Without loss of generality we can assume that 
\[
 \Bigl(\mathbb E \Bigl\|\sum_{n=1}^N \eps_nd_n\Bigr\|^p\Bigr)^{\frac 1p},\Bigl(\mathbb E \Bigl\|\sum_{n=1}^N d_n\Bigr\|^p\Bigr)^{\frac 1p} \leq 1.
\]
Let $(r_n)_{n=1}^N$ be a sequence of Rademacher variables and $(\phi_n)_{n=1}^N$ be a sequence of functions as in Definition \ref{def:Paley-Walsh}, i.e.\ be such that $d_n =  r_n\phi_n(r_1,\ldots,r_{n-1})$ for each $n=1, \ldots,N$.

By the same techniques as were used in the proof of Theorem \ref{thm:exampleforlowboundofA} we can find a sequence of independent continuous real-valued symmetric martingales $(M^n)_{n=1}^N$ on $[0,1]$ such that for each $n=1,\ldots,N$
\begin{equation}\label{eq:approxinthm:weakdiffsubproccont}
 \|(M^n - \sign M^n)\phi_n(\sign M^1,\ldots,\sign M^{n-1})\|_{L^p(\Omega; X)}\leq \frac{1}{8NK^2}.
\end{equation}

Let $\sigma_n = \sign M^n$ for each $n=1,\ldots, N$. Then we define continuous martingales $M, N:\mathbb R_+\times \Omega \to X$ in the following way:
\[
M_t =
\begin{cases}
0, &\text{if}\;\; 0\leq t\leq1;\\
 M_{n}+ M^n_{t-n}\phi_{n}(\sigma_1,\ldots,\sigma_{n-1}),&\text{if}\;\;  t\in(n,n+1], n\in \{1\ldots,N\},\\
  M_{N+1},&\text{if}\;\;  t>N+1,
\end{cases}  
 \]

 \[
N_t =
\begin{cases}
0, &\text{if}\;\; 0\leq t\leq1;\\
 M_{n}+ \eps_nM^n_{t-n}\phi_{n}(\sigma_1,\ldots,\sigma_{n-1}),&\text{if}\;\;  t\in(n,n+1], n\in \{1\ldots,N\},\\
  N_{N+1},&\text{if}\;\;  t>N+1.
\end{cases}  
 \]
 Then $N$ is weakly differentially subordinated to $M$. Indeed, for each $x^* \in X^*$, $n\in \{1,\ldots, N\}$ and $t\in [n ,n+1]$ a.s.
 \begin{align*}
    [\langle M,x^*\rangle]_t - [\langle M,x^*\rangle]_n &= [M^n]_{t-n} |\langle \phi_{n}(\sigma_1,\ldots,\sigma_{n-1}),x^*\rangle|^2\\
    &= [M^n]_{t-n} |\langle \eps_n\phi_{n}(\sigma_1,\ldots,\sigma_{n-1}),x^*\rangle|^2\\
    &=[\langle N,x^*\rangle]_t - [\langle N,x^*\rangle]_n,
 \end{align*}
therefore, since $M_1 = N_1 = 0$ a.s., we have that for each $x^* \in X^*$ and $t\geq 0$ a.s.\ $[\langle M,x^*\rangle]_t = [\langle N,x^*\rangle]_t$, so $N$ is weakly differentially subordinated to $M$. Then
\begin{align*}
 (\mathbb E \|N_{\infty}\|^p)^{\frac 1p} &= \Bigl(\mathbb E\Bigl\|\sum_{n=1}^N \eps_nM^n_1 \phi_{n}(\sigma_1,\ldots,\sigma_{n-1})\Bigr\|  \Bigr)^{\frac 1p}\\
 &\stackrel{(i)}\geq \Bigl(\mathbb E\Bigl\|\sum_{n=1}^N \eps_n\sigma_n \phi_{n}(\sigma_1,\ldots,\sigma_{n-1})\Bigr\|  \Bigr)^{\frac 1p}\\
 &\quad - \sum_{n=1}^N  \|(M^n - \sigma_n)\phi_n(\sigma_1,\ldots,\sigma_{n-1})\|_{L^p(\Omega; X)}\\
 &\stackrel{(ii)}\geq \Bigl(\beta_{p, X}\wedge 2K-\frac {1}{2K}\Bigr)\Bigl(\mathbb E\Bigl\|\sum_{n=1}^N\sigma_n \phi_{n}(\sigma_1,\ldots,\sigma_{n-1})\Bigr\|  \Bigr)^{\frac 1p}- \frac{1}{8K^2}\\
 &\stackrel{(iii)}\geq \Bigl(\beta_{p, X}\wedge 2K-\frac {1}{2K}\Bigr)\Bigl(\mathbb E\Bigl\|\sum_{n=1}^NM^n_1 \phi_{n}(\sigma_1,\ldots,\sigma_{n-1})\Bigr\|  \Bigr)^{\frac 1p}\\
 &\quad - 2K \sum_{n=1}^N  \|(M^n - \sigma_n)\phi_n(\sigma_1,\ldots,\sigma_{n-1})\|_{L^p(\Omega; X)} - \frac{1}{8K^2}\\
 &\stackrel{(iv)}\geq \Bigl(\beta_{p, X}\wedge K-\frac {1}{K}\Bigr)\Bigl(\mathbb E\Bigl\|\sum_{n=1}^NM^n_1 \phi_{n}(\sigma_1,\ldots,\sigma_{n-1})\Bigr\|  \Bigr)^{\frac 1p}\\
 &= \Bigl(\beta_{p, X}\wedge K-\frac {1}{K}\Bigr) (\mathbb E \|M_{\infty}\|^p)^{\frac 1p},
\end{align*}
where $(i)$ and $(iii)$ follow from the triangle inequality, and $(ii)$ and $(iv)$ follow from \eqref{eq:approxinthm:weakdiffsubproccont}. Hence if $X$ is not UMD, then such $c_{p,X}$ from \eqref{eq:weakdiffsubprocmaingencont} does not exist since $\bigl(\beta_{p, X}\wedge K-\frac {1}{K}\bigr) \to \infty$ as $K\to \infty$. If $X$ is UMD, then such $c_{p,X}$ could exist, and if this is the case, then 
$$
c_{p,X}\geq \lim_{K\to \infty}\Bigl(\beta_{p, X}\wedge K-\frac {1}{K}\Bigr) =\beta_{p, X}.
$$
\end{proof}

\begin{customlemma}{\ref{lemma:weakdiffsubdecomp}}
  Let $X$ be a Banach space, $M^c,N^c:\mathbb R_+ \times \Omega \to X$ be continuous martingales, $M^d, N^d:\mathbb R_+ \times \Omega \to X$ be purely discontinuous martingales, $M^c_0 = N^c_0=0$. Let $M := M^c +M^d$, $N:= N^c + N^d$. Suppose that $N$ is weakly differentially subordinated to $M$. Then $N^c$ is weakly differentially subordinated to $M^c$, and $N^d$ is weakly differentially subordinated to $M^d$.
\end{customlemma}

\begin{proof}
 First notice that a.s. 
 \begin{align*}
  \|N^c_0\|=0&\leq 0=\|M^c_0\|,\\
  \|N^d_0\|=\|N_0\|&\leq \|M_0\|=\|M^d_0\|.
 \end{align*}
Now fix $x^* \in X^*$. It is enough now to prove that $\langle N^c, x^* \rangle$ is differentially subordinated to $\langle M^c, x^*\rangle$, and that $\langle N^d, x^* \rangle$ is weakly differentially subordinated to $\langle M^d, x^*\rangle$. But this follows from \cite[Lemma 1]{Wang}, Remark \ref{rem:YoeMeyR} and the fact that $\langle M^d, x^*\rangle$ and $\langle N^d, x^*\rangle$ are purely discontinuous processes, and $\langle M^c, x^*\rangle$ and $\langle N^c, x^*\rangle$ are continuous processes.
\end{proof}

\end{document}